%%%%%%%%%%%%%%%%%%%%%%% file template.tex %%%%%%%%%%%%%%%%%%%%%%%%%
%
% This is a general template file for the LaTeX package SVJour3
% for Springer journals.          Springer Heidelberg 2006/03/15
%
% Copy it to a new file with a new name and use it as the basis
% for your article. Delete % signs as needed.
%
% This template includes a few options for different layouts and
% content for various journals. Please consult a previous issue of
% your journal as needed.
%
%%%%%%%%%%%%%%%%%%%%%%%%%%%%%%%%%%%%%%%%%%%%%%%%%%%%%%%%%%%%%%%%%%%
%
% First comes an example EPS file -- just ignore it and
% proceed on the \documentclass line
% your LaTeX will extract the file if required
%\begin{filecontents*}{example.eps}
%%!PS-Adobe-3.0 EPSF-3.0
%%%BoundingBox: 19 19 221 221
%%%CreationDate: Mon Sep 29 1997
%%%Creator: programmed by hand (JK)
%%%EndComments
%gsave newpath
%  20 20 moveto
%  20 220 lineto
%  220 220 lineto
%  220 20 lineto
%closepath 2 setlinewidth gsave
%  .4 setgray fill
%grestore stroke grestore
%\end{filecontents*}
%
\documentclass{svjour3}                     % onecolumn (standard format)
\smartqed  % flush right qed marks, e.g. at end of proof
\usepackage{setspace}
\usepackage{algorithm}
\usepackage{algorithmic}

\usepackage{hyperref}
\usepackage{enumerate}
\usepackage{amssymb}
\usepackage{booktabs}
\usepackage{graphicx,fancyhdr}
\usepackage{graphics,color}
\usepackage{subfigure}%²¢ÁеÄ×ÓͼÐÎ
\usepackage{booktabs}
\usepackage{graphicx}
\usepackage{amsmath}
\numberwithin{equation}{section}
\numberwithin{theorem}{section}
\numberwithin{lemma}{section}
\numberwithin{remark}{section}

\usepackage{mathptmx}
\begin{document}

\title{Numerical algorithms of the two-dimensional Feynman-Kac equation for reaction and diffusion processes}

%\titlerunning{Short form of title}        % if too long for running head

\author{Daxin Nie$^{1}$, Jing Sun$^{1}$, Weihua Deng$^{*,1}$
}

%\authorrunning{Short form of author list} % if too long for running head

\institute{
$^{*}$Corresponding author. E-mail: dengwh@lzu.edu.cn            \\
$^{1}$School of Mathematics and Statistics, Gansu Key Laboratory of Applied Mathematics and Complex Systems, Lanzhou University, Lanzhou 730000, P.R. China \\
}
%E-mail: Eli.Barkai@biu.ac.il}

\date{Received: date / Accepted: date}
% The correct dates will be entered by the editor

\maketitle

\begin{abstract}
This paper provides a finite difference discretization for the backward Feynman-Kac equation, governing the distribution of functionals of the path for a particle undergoing both reaction and diffusion [Hou and Deng, J. Phys. A: Math. Theor., {\bf51}, 155001 (2018)]. Numerically solving the equation with the time tempered fractional substantial derivative and tempered fractional Laplacian consists in discretizing these two non-local operators. Here, using convolution quadrature, we provide a first-order and second-order schemes for discretizing the time tempered fractional substantial derivative, which doesn't require the assumption of the regularity of the solution in time; we use the finite difference method to approximate the two-dimensional tempered fractional Laplacian, and the accuracy of the scheme depends on the regularity of the solution on $\bar{\Omega}$ rather than the whole space. Lastly, we verify the predicted convergence orders and the effectiveness of the presented schemes by numerical examples.

\keywords{ two-dimensional Feynman-Kac equation \and finite difference approximation  \and convolution quadrature \and error estimates}
% \PACS{PACS code1 \and PACS code2 \and more}
% \subclass{MSC code1 \and MSC code2 \and more}
\end{abstract}

%\newpage

\section{Introduction}

The random motion of a particle is a most fundamental and widely appeared natural phenomena. The so-called particle can be a really physical one or an abstract one, e.g., stock market. Stochastic processes  $x_0(t)$ are mathematical models to describe this phenomena. In a long history, the Wiener process is the most studied and representative stochastic process, the second moment of which is a linear function of time $t$. In the past twenty years, it is found that anomalous dynamics  are ubiquitous in the natural world. The anomalous stochastic processes are  distinguished from Wiener process by the evolution of their second moments with respect to  time $t$, that is, the second moment of the stochastic anomalous process is a non-linear function of $t$ \cite{Metzler2000}.

Currently, stochastic anomalous processes are hot research topics, including modeling, theoretical analysis, and numerical methods. Obtaining the probability density function (PDF)  of the statistical observables plays an important role in studying the stochastic processes; not only it is a useful technique to extract practical messages but also a key strategy to help understand the mechanism of stochastic processes. The functional is one of the most useful and representative statistical observables, defined as
\begin{equation}\label{func1}
A=\int_{0}^{t}U[x_0(\tau)]d\tau,
\end{equation}
where $x_0(t)$ is a trajectory of particle, $U(x_0)$ is a prescribed function depending on specific applications, e.g., one can take $U(x_0)=1$ in a given region and set it to be zero in the rest of the region for \eqref{func1} to study the kinetics of chemical reactions in some given domain \cite{Agmon1984,Carmi2010}. Some important progresses have been made for deriving the governing equation of the PDF of the functional $A$ in Fourier space. The earliest work \cite{Kac1949} for this issue is for the distribution of the functional of Wiener process, completed by Kac in 1949 who was influenced by Feynman's thesis of the derivation of Schr\"odinger's equation. After that the related equations are usually named with the word: Feynman-Kac.   For the stochastic process $x_0(t)$ described by a continuous time random walk (CTRW) with power law waiting time and jump length distributions, Barkai and his collaborators derive the governing equation of the distribution of the corresponding functional in Fourier space \cite{Carmi2010,Turgeman2009}. Because of the finite lifespan and the bounded physical space, sometimes the power law waiting time and jump length distributions of the CTRW need to be tempered \cite{Meerschaert2012}; then the tempered fractional Feynman-Kac equations are derived in \cite{Wu2016}, and the numerical methods for the equations are discussed in \cite{Chen2018,Chen2018-2,Deng2017,Sun2018,Zhang2017,ZhangDeng2017}.  For $x_0(t)$ characterized by Langevin pictures, the readers can refer to \cite{Cairoli2015,Cairoli2017} for the derivation of Feynman-Kac equations.

More recently, the reactions are introduced to the stochastic process $x_0(t)$, which means that the particles perform both diffusion and chemical reaction.  With the exponentially tempered power law distributions of waiting time and jump length for the diffusion, the Feynman-Kac equation for the reaction and diffusion process is derived in \cite{Hou2018}, which includes  the tempered fractional substantial derivative and the tempered fractional Laplacian.
In this paper, we consider the numerical scheme of the two-dimensional backward tempered fractional Feynman-Kac equation for reaction and diffusion processes with homogeneous Dirichlet boundary conditions \cite{Hou2018}, i.e.,
\begin{equation}\label{equFk}
    \begin{aligned}
    \left\{
    \begin{aligned}
        &\frac{\partial}{\partial t}G(\rho,t,\mathbf{x_0})=K ~_0D_{t}^{1-\alpha, \lambda, \mathbf{x_0}} (\Delta+\gamma)^{\frac{\beta}{2}} G(\rho,t,\mathbf{x_0})+(r(\mathbf{x_0})+J\rho U(\mathbf{x_0}))G(\rho,t,\mathbf{x_0})\\
        &~~~~~~~~~~~~~~~~~~~~~~~~~+(\lambda^{\alpha}~_0D_{t}^{1-\alpha, \lambda, \mathbf{x_0}}-\lambda)\left(G(\rho,t,\mathbf{x_0})-e^{J\rho U(\mathbf{x_0})t}e^{r(\mathbf{x_0})t}\right),\\
        &G(\rho,0,\mathbf{x_0})=G_{0}(\rho,\mathbf{x_0}),~~~~~\mathbf{x_0}\in \Omega,\\
        &G(\rho,t,\mathbf{x})=0,~~~~~~\mathbf{x}\in \mathbb{R}^2\backslash\Omega,~~~~0\leq t\leq T.\\
    \end{aligned}
    \right.
    \end{aligned}
\end{equation}
%$G_{x_0}(\rho,t)=\int_{-\infty}^{\infty}e^{-\rho A}G_{x_0}(A,t)dA$
Here, $G(A,t,\mathbf{x_0})$ is the PDF of the functional $A$ at time $t$ with the initial position $\mathbf{x_0}$, and $\rho$ is the Fourier pair of $A$;  $K$ is a positive constant, for convenience, we take $K=1$ in this paper; $0<\alpha<1$, $J=\sqrt{-1}$ and $\lambda>0$; $r(\mathbf{x_0})$ stands for the reaction rate and satisfies $\sup_{\mathbf{x_0}\in \bar{\Omega}}r(\mathbf{x_0})<0$; $~_0D_{t}^{1-\alpha, \lambda, \mathbf{x_0}}$ is the tempered fractional substantial derivative, which is defined by
\begin{equation}\label{equDtime1}
  \begin{aligned}
        &~_0D_{t}^{1-\alpha, \lambda, \mathbf{x_0}}G(\rho,t,\mathbf{x_0})\\
        &=\frac{1}{\Gamma(\alpha)}\left(\frac{\partial}{\partial t}+\lambda-r(\mathbf{x_0})-J\rho U(\mathbf{x_0})\right)\int_{0}^{t}\frac{e^{-(t-\tau)(\lambda-r(\mathbf{x_0})-J\rho U(\mathbf{x_0}))}}{(t-\tau)^{1-\alpha}}G(\rho,\tau,\mathbf{x_0})d\tau.
    \end{aligned}
\end{equation}
And $(\Delta+\gamma)^{\frac{\beta}{2}}$ denotes the two-dimensional tempered fractional Laplacian \cite{Deng:17}, whose definition is
\begin{equation}\label{spatemdef}
(\Delta+\gamma)^{\frac{\beta}{2}}G(\mathbf{x})=-c_{2,\beta}{\rm P.V.}\int_{\mathbb{R}^2}\frac{G(\mathbf{x})-G(\mathbf{y})}{e^{\gamma|\mathbf{x}-\mathbf{y}|}|\mathbf{x}-\mathbf{y}|^{2+\beta}}d\mathbf{y} ~~~~~{\rm for}~~\beta \in(0,2),
\end{equation}
with
\begin{equation*}
c_{2,\beta}=
\left\{
\begin{aligned}
&\frac{1}{2\pi|\Gamma(-\beta)|}         ~~~~~~~~~~~~{\rm for} ~\lambda>0~{\rm and}~\beta\neq 1,\\
&\frac{\beta\Gamma(\frac{2+\beta}{2})}{2^{1-\beta}\pi\Gamma(1-\beta/2)} ~~{\rm for}~\lambda=0~{\rm or}~\beta=1.
\end{aligned}
\right.
\end{equation*}
 P.V. denotes the principal value integral, $\mathbf{x},~\mathbf{y}\in \mathbb{R}^2$, and $\Gamma(t)=\int_{0}^{\infty}s^{t-1}e^{-s}ds$ is the Gamma function.

When numerically solving Eq. (\ref{equFk}), two main problems need to be carefully dealt with. The first one is to discretize  the tempered fractional substantial derivative (\ref{equDtime1}), which is a time-space coupled operator and whose form depends on the initial position $\mathbf{x_0}$; existing methods of discretizing it mainly require that the solution should be a $C^2$ or $C^3$-function in time, such as G-L scheme and $L_1$ scheme \cite{Chen2018,Chen2018-2,Deng2015,Deng2017}. Here, we develop a first-order and second-order schemes based on convolution quadrature introduced by Lubich \cite{Lubich1988,Lubich1988-2}. Theorem \ref{thmBEsemER} and Theorem \ref{thmSBDsemER} in Sec. 4 show that the convergence order only depends on the regularity of source term $f$ instead of the exact solution $G$.
%To the best of our knowledge, it's new to apply convolution quadrature to approximate tempered fractional substantial derivative.
The second problem is to discretize (\ref{spatemdef}); so far, the discretizations of (\ref{spatemdef}) are mainly for the one-dimensional case \cite{Zhang2017,ZhangDeng2017}; our previous work \cite{Sun2018} provides a finite difference scheme for the two-dimensional tempered fractional Laplacian $(\Delta+\gamma)^{\frac{\beta}{2}}$. Here, we modify the discretization according to \cite{Zhang2017}, so that the regularity requirement can be relaxed from the whole space to $\bar{\Omega}$, and the optimal convergence rate is achieved.
%can be reduced to regularity of solution on $\bar{\Omega}$ rather than the whole space, and the optimal convergence rate is achieved.
Furthermore, we provide a method to construct a preconditioner when we solve relative linear system by Preconditioned Conjugate Gradient (PCG) method.

The framework of convolution quadrature \cite{Jin2016} can be briefly reviewed as follow. Firstly, one can define $\tilde{\mathcal{B}}(\partial_t)v(t)=(\mathcal{B}\ast v)(t)$, where $\partial_t$ denotes time differentiation; and let $\tau$ denote the time step size. The convolution quadrature refers to an approximation of any function of the form $\mathcal{B}\ast v$ as
\begin{equation*}
  (\mathcal{B}\ast v)(t)=\int_0^t \mathcal{B}(t-s)v(s)ds\approx \tilde{\mathcal{B}}(\bar{\partial}_\tau)v(t),
\end{equation*}
where
\begin{equation*}
 \tilde{\mathcal{B}}(\bar{\partial}_\tau)v(t)=\sum_{0\leq j\tau\leq t}d_jv(t-j\tau),~~~~t>0,
\end{equation*}
and the quadrature weights $\{d_j\}_{j=0}^\infty$ are computed from $\tilde{\mathcal{B}}(z)$ denoting Laplace transform  of $\mathcal{B}(t)$, i.e., $\sum\limits_{j=0}^{\infty}d_j\zeta^j=\tilde{\mathcal{B}}(\delta(\zeta)/\tau)$. Here $\delta(\zeta)$ is the quotient of the generating polynomials of a stable and consistent linear multi-step method \cite{Hairer}. In this paper, we take $\delta(\zeta)=1-\zeta$ and $\delta(\zeta)=(1-\zeta)+(1-\zeta)^2/2$ to, respectively, get  first-order and second-order scheme for the tempered fractional substantial derivative (\ref{equDtime1}).

The remainder of the paper is organized as follows. In Sec. 2, we give some preliminaries needed in the paper and derive an equivalent form of  Eq. \eqref{equFk}. In Sec. 3, we combine the weighted trapezoidal rule and the bilinear interpolation to discretize the tempered fractional Laplacian and perform error analysis. In Sec. 4, we use convolution quadrature to discretize the tempered fractional substantial derivative, and then get the first-order and second-order schemes. In Sec. 5, we present the efficient computation of the linear system generated by the  discretization.

\section{Preliminaries and equivalent form of Eq. \eqref{equFk}}
This section introduces some preliminary knowledges and derives the equivalent formulation of Eq. \eqref{equFk}.

\subsection{Preliminaries}
This subsection provides some definitions and properties needed in the paper. Firstly, define the discrete inner product and the discrete norm as
\begin{equation*}
\begin{split}
  &(\mathbf{v},\mathbf{w})=h^2\sum_{i=1}^{M}v_i {w}_i^*,\\
  &\|\mathbf{v}\|_{l^2}=\sqrt{(\mathbf{v},\mathbf{v})},\\
  &\|\mathbf{v}\|_{l^\infty}=\max_{1\leq i\leq M}|v_i|,
\end{split}
\end{equation*}
where $\mathbf{v},\mathbf{w}\in \mathbb{R}^M$; denote
\begin{equation*}
\begin{split}
 &\|v\|_{L^\infty(\Omega)}=\sup_{x\in \Omega}|v(x)|, \\
 &\|v\|^2_{L^2(\Omega)}=\int_{\Omega}|v|^2dx,
\end{split}
\end{equation*}
as the continuous norms, where $|v(x)|^2=v(x)v^*(x)$ and $v^*(x)$ means the conjugate of $v(x)$. Furthermore, we recall some definitions of the tempered fractional integrals and derivatives.
\begin{definition}[Riemann-Liouville tempered fractional integral \cite{Buschman1972,Cartea2007,Li2015}] \label{def1}
 Suppose that the real function $v(t)$ is piecewise continuous on $(a,b)$ and $\alpha> 0, \lambda\geq0$, $v(t)\in L[a,b]$. The Riemann-Liouville tempered fractional integral of order $\alpha$ is defined to be
\begin{equation*}
_{a}I^{\alpha, \lambda}_{t}v(t)=e^{-\lambda t}~_{a}I^{\alpha}_{t}\left(e^{\lambda t}v(t)\right)=\frac{1}{\Gamma(\alpha)}\int^t_{a}(t-\tau)^{\alpha-1}e^{-\lambda(t-\tau)}v(\tau)d\tau,
\end{equation*}
where $_{a}I^{\alpha}_{t}v(t)$ denotes the Riemann-Liouville fractional integral
\begin{equation*}
_{a}I^{\alpha}_{t}v(t)=\frac{1}{\Gamma(\alpha)}\int^t_{a}(t-\tau)^{\alpha-1}v(\tau)d\tau.
\end{equation*}
\end{definition}

\begin{definition}[Riemann-Liouville tempered fractional derivative \cite{Baeumer2010,Cartea2007,Li2015}]
For $n-1<\alpha<n,n\in \mathbb{N}^{+},\lambda\geq 0$, the Riemann-Liouville tempered fractional derivative is defined by
\begin{equation*}
_{a}D^{\alpha, \lambda}_{t}v(t)=e^{-\lambda t}~_{a}D^{\alpha}_{t}\left(e^{\lambda t}v(t)\right)=\frac{e^{-\lambda t}}{\Gamma(n-\alpha)}\frac{d^n}{dt^n}\int^t_{a}\frac{e^{\lambda\tau}v(\tau)}{(t-\tau)^{\alpha-n+1}}d\tau,
\end{equation*}
where $_{a}D^{\alpha}_{t}v(t)$ denotes the Riemann-Liouville fractional derivative and it is described as
\begin{equation*}
_{a}D^{\alpha}_{t}v(t)=\frac{1}{\Gamma(n-\alpha)}\frac{d^n}{dt^n}\int^t_{a}\frac{v(\tau)}{(t-\tau)^{\alpha-n+1}}d\tau.
\end{equation*}
\end{definition}

\begin{remark}
  When $0<\alpha<1$, there hold that
\begin{equation*}
    ~_0D^\alpha_t~_0I^\alpha_t v(t)=v(t)
\end{equation*}
and
\begin{equation*}
    ~_0I^\alpha_t ~_0D^\alpha_t v(t)=v(t).
\end{equation*}
\end{remark}

\begin{definition}[Caputo tempered fractional derivative \cite{Li2015,Samko,Tatar2004}]\label{def2}
For $n-1<\alpha<n,\,n\in \mathbb{N}^{+},\,\lambda\geq 0$, the Caputo tempered fractional derivative is defined as
\begin{equation*}
~^C_{a}D^{\alpha,\lambda}_tv(t)=e^{-\lambda t}~^C_{a}D^{\alpha}_{t}\left(e^{\lambda t}v(t)\right)=\frac{e^{-\lambda t}}{\Gamma(n-\alpha)}\int^t_{a}(t-\tau)^{n-\alpha-1}\frac{d^n(e^{\lambda\tau}v(\tau))}{d \tau^n}d\tau,
\end{equation*}
 where $~^C_{a}D^{\alpha}_{t}v(t)$ denotes the Caputo fractional derivative and it is defined by
 \begin{equation*}
~^C_{a}D^{\alpha}_tv(t)=\frac{1}{\Gamma(n-\alpha)}\int^t_{a}(t-\tau)^{n-\alpha-1}\frac{d^n v(\tau)}{d \tau^n}d\tau.
\end{equation*}
\end{definition}

\begin{proposition}[\cite{Li2015}]\label{procapLap}
  The Laplace transform of the Riemann-Liouville tempered fractional derivative is given by
  \begin{equation*}
    \widetilde{~_0D^{\alpha,\lambda}_{t}v}(z)=(z+\lambda)^\alpha \tilde{v}(z)-\sum_{k=0}^{n-1}(z+\lambda)^k\left(~_0D^{\alpha-k-1}_{t}(e^{\lambda t}v(t))\Big|_{t=0}\right),
  \end{equation*}
 while the Laplace transform of the Caputo tempered fractional derivative is
  \begin{equation*}
    \widetilde{~_0^CD^{\alpha,\lambda}_{t}v}(z)=(z+\lambda)^\alpha \tilde{v}(z)-\sum_{k=0}^{n-1}(z+\lambda)^{\alpha-k-1}\left(\frac{d^k}{dt^k}(e^{\lambda t}v(t))\Big|_{t=0}\right).
  \end{equation*}
\end{proposition}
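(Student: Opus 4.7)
The plan is to derive both identities by reducing them to the classical Laplace-transform formulas for the untempered Riemann-Liouville and Caputo derivatives, combined with the shift rule $\mathcal{L}\{e^{-\lambda t}g(t)\}(z) = \tilde g(z+\lambda)$. Definitions \ref{def1}--\ref{def2} already express the tempered derivatives as $e^{-\lambda t}$ times a standard derivative applied to $g(t) := e^{\lambda t}v(t)$, which is exactly the structure the shift rule is designed to exploit; moreover, in reverse the shift rule also tells us that $\tilde g(z+\lambda) = \widetilde{e^{\lambda t}v(t)}(z+\lambda) = \tilde v(z)$.

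For the Riemann-Liouville case, first I would write
\begin{equation*}
\widetilde{~_0D^{\alpha,\lambda}_t v}(z) \;=\; \widetilde{e^{-\lambda t}\, _0D^{\alpha}_t g(t)}(z) \;=\; \widetilde{~_0D^{\alpha}_t g}(z+\lambda),
\end{equation*}
using the factorization in Definition \ref{def1} and the shift rule. Second, I would substitute the well-known identity $\widetilde{~_0D^{\alpha}_t g}(w) = w^\alpha \tilde g(w) - \sum_{k=0}^{n-1} w^k \bigl[~_0D^{\alpha-k-1}_t g(t)\bigr]_{t=0}$ at $w = z+\lambda$. The leading term collapses via $\tilde g(z+\lambda) = \tilde v(z)$ to $(z+\lambda)^\alpha \tilde v(z)$, while the boundary terms retain the form $_0D^{\alpha-k-1}_t(e^{\lambda t}v(t))|_{t=0}$, yielding the stated identity. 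The Caputo case is word-for-word identical: invoking the analogue $\widetilde{~^C_0D^{\alpha}_t g}(w) = w^\alpha \tilde g(w) - \sum_{k=0}^{n-1} w^{\alpha-k-1} g^{(k)}(0)$ at $w=z+\lambda$, the factor $w^{\alpha-k-1}$ becomes $(z+\lambda)^{\alpha-k-1}$ and $g^{(k)}(0)$ becomes $\frac{d^k}{dt^k}(e^{\lambda t}v(t))|_{t=0}$.

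No step is genuinely deep; the only obstacle is bookkeeping. Care is needed to (i) ensure every power of $w$ in the classical formula is uniformly replaced by $(z+\lambda)$ when setting $w = z+\lambda$, and (ii) keep the exponential factor $e^{\lambda t}$ intact inside the initial-value expressions rather than commuting it through the fractional derivative, since it is the composite function $e^{\lambda t}v(t)$ on which the untempered operator acts. Provided the classical Laplace formulas for $_0D^{\alpha}_t$ and $^C_0D^{\alpha}_t$ are cited as known, the whole argument reduces to a single pair of shift substitutions.
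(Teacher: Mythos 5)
Your derivation is correct: the factorizations $_0D^{\alpha,\lambda}_t v = e^{-\lambda t}\,_0D^{\alpha}_t(e^{\lambda t}v)$ and $^C_0D^{\alpha,\lambda}_t v = e^{-\lambda t}\,{}^C_0D^{\alpha}_t(e^{\lambda t}v)$ in Definitions \ref{def1}--\ref{def2}, the shift rule, and the classical Laplace formulas at $w=z+\lambda$ (with $\tilde g(z+\lambda)=\tilde v(z)$) give exactly the two stated identities. Note that the paper itself offers no proof --- the proposition is simply cited from \cite{Li2015} --- and your shift-substitution argument is the standard one that the cited reference uses, so there is nothing to reconcile beyond the routine caveat that $v$ be of exponential order so that both transforms exist in a common right half-plane.
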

\subsection{Equivalent form of Eq. \eqref{equFk}}

According to \eqref{equDtime1} and Definition \ref{def1}, we get
\begin{equation}\label{equDtime2}
    \begin{aligned}
        &~_0D_{t}^{1-\alpha, \lambda, \mathbf{x_0}}G(\rho,t,{\mathbf{x_0}})\\
        &=\left(\frac{\partial}{\partial t}+\lambda-r(\mathbf{x_0})-J\rho U(\mathbf{x_0})\right)e^{-(\lambda-r(\mathbf{x_0})-J\rho U(\mathbf{x_0}))t}~_0I^\alpha_t\left(e^{(\lambda-r(\mathbf{x_0})-J\rho U(\mathbf{x_0}))t}G(\rho,t,{\mathbf{x_0}})\right).
    \end{aligned}
\end{equation}
By calculating directly, we obtain
\begin{equation}\label{equDtime3}
  ~_0D_{t}^{1-\alpha, \lambda, \mathbf{x_0}}G(\rho,t,{\mathbf{x_0}})=e^{-(\lambda-r(\mathbf{x_0})-J\rho U(\mathbf{x_0}))t}~_0D_{t}^{1-\alpha}\left(e^{(\lambda-r(\mathbf{x_0})-J\rho U(\mathbf{x_0}))t} G(\rho,t,{\mathbf{x_0}})\right).
\end{equation}
Combining \eqref{equDtime3} with \eqref{equFk}, we have
\begin{equation}\label{equFk1}
    \begin{aligned}
        \frac{\partial}{\partial t}G(\rho,t,{\mathbf{x_0}})=&e^{-(\lambda-r(\mathbf{x_0})-J\rho U(\mathbf{x_0}))t}~_0D_{t}^{1-\alpha}\left(e^{(\lambda-r(\mathbf{x_0})-J\rho U(\mathbf{x_0}))t}(\Delta+\gamma)^{\frac{\beta}{2}} G(\rho,t,{\mathbf{x_0}})\right)\\
        &+\left(r(\mathbf{x_0})+J\rho U(\mathbf{x_0})\right)G(\rho,t,{\mathbf{x_0}})-\lambda G(\rho,t,{\mathbf{x_0}})+\lambda e^{J\rho U(\mathbf{x_0})t}e^{r(\mathbf{x_0})t}\\
        &+\lambda^{\alpha}e^{-(\lambda-r(\mathbf{x_0})-J\rho U(\mathbf{x_0}))t}~_0D_t^{1-\alpha}\left(e^{(\lambda-r(\mathbf{x_0})-J\rho U(\mathbf{x_0}))t}G(\rho,t,{\mathbf{x_0}})\right)\\
        &-\lambda^{\alpha}e^{-(\lambda-r(\mathbf{x_0})-J\rho U(\mathbf{x_0}))t}~_0D_t^{1-\alpha}e^{\lambda t}.
\end{aligned}
\end{equation}
Further simplification leads to
\begin{equation}\label{equFk2}
    \begin{aligned}
        &\left(\frac{\partial}{\partial t}G(\rho,t,{\mathbf{x_0}})+\lambda G(\rho,t,{\mathbf{x_0}})-(r(\mathbf{x_0})+J\rho U(\mathbf{x_0}))G(\rho,t,{\mathbf{x_0}})\right)e^{(\lambda-r(\mathbf{x_0})-J\rho U(\mathbf{x_0}))t}\\
&=~_0D_{t}^{1-\alpha}\left(e^{(\lambda-r(\mathbf{x_0})-J\rho U(\mathbf{x_0}))t} (\Delta+\gamma)^{\frac{\beta}{2}} G(\rho,t,{\mathbf{x_0}})\right)+\lambda e^{\lambda t}\\
&~~~~+\lambda^{\alpha}~_0D_{t}^{1-\alpha}\left(e^{(\lambda-r(\mathbf{x_0})-J\rho U(\mathbf{x_0}))t}G(\rho,t,{\mathbf{x_0}})\right)-\lambda^{\alpha}~_0D_t^{1-\alpha}e^{\lambda t}.
    \end{aligned}
\end{equation}
It is easy to check that
\begin{equation*}
\begin{aligned}
    &\left(\frac{\partial}{\partial t}G(\rho,t,{\mathbf{x_0}})+\lambda G(\rho,t,{\mathbf{x_0}})-(r(\mathbf{x_0})+J\rho U(\mathbf{x_0}))G(\rho,t,{\mathbf{x_0}})\right)e^{(\lambda-r(\mathbf{x_0})-J\rho U(\mathbf{x_0}))t}\\
    &=\frac{\partial}{\partial t}\left(e^{(\lambda-r(\mathbf{x_0})-J\rho U(\mathbf{x_0}))t}G(\rho,t,{\mathbf{x_0}})\right).
\end{aligned}
\end{equation*}
Under the assumption that the solution to Eq. \eqref{equFk} is sufficiently regular, \eqref{equFk2} can be rewritten as
\begin{equation*}
    \begin{aligned}
        ~^{C}_0D_{t}^{\alpha}\left(e^{(\lambda-r(\mathbf{x_0})-J\rho U(\mathbf{x_0}))t}G(\rho,t,{\mathbf{x_0}})\right)=&e^{(\lambda-r(\mathbf{x_0})-J\rho U(\mathbf{x_0}))t} (\Delta+\gamma)^{\frac{\beta}{2}} G(\rho,t,{\mathbf{x_0}})+\lambda ~_0I_{t}^{1-\alpha}e^{\lambda t}\\
        &+\lambda^{\alpha}e^{(\lambda-r(\mathbf{x_0})-J\rho U(\mathbf{x_0}))t}G(\rho,t,{\mathbf{x_0}})-\lambda^{\alpha}e^{\lambda t}.
    \end{aligned}
\end{equation*}
According to Definition \ref{def2}, we have the following equivalent form of Eq. \eqref{equFk}
\begin{equation}\label{equFkequ1}
    \begin{aligned}
    &e^{(r(\mathbf{x_0})+J\rho U(\mathbf{x_0}))t}~^{C}_0D_{t}^{\alpha,\lambda}\left(e^{-(r(\mathbf{x_0})+J\rho U(\mathbf{x_0}))t}G(\rho,t,{\mathbf{x_0}})\right)-\lambda^{\alpha}G(\rho,t,{\mathbf{x_0}})\\
    &=(\Delta+\gamma)^{\frac{\beta}{2}} G(\rho,t,{\mathbf{x_0}})+f,
    \end{aligned}
\end{equation}
where
\begin{equation*}
f=-\lambda^{\alpha} e^{(r(\mathbf{x_0})+J\rho U(\mathbf{x_0}))t}+\lambda e^{-(\lambda-r(\mathbf{x_0})-J\rho U(\mathbf{x_0}))t}~_0I_{t}^{1-\alpha}e^{\lambda t}.
\end{equation*}

So, to get an effective numerical scheme for Eq. \eqref{equFk} with nonhomogeneous initial condition, we need to homogenize the initial condition for Eq. \eqref{equFkequ1}, i.e.,
\begin{equation}\label{equinitial0equ}
  G(\rho,t,{\mathbf{x_0}})=W(\rho,t,{\mathbf{x_0}})+G(\rho,0,{\mathbf{x_0}})e^{-(\lambda-r(\mathbf{x_0})-J\rho U(\mathbf{x_0}))t}.
\end{equation}
So \eqref{equFkequ1} can be rewritten as
\begin{equation}\label{equFkequ2qc}
    \begin{aligned}
    &e^{(r(\mathbf{x_0})+J\rho U(\mathbf{x_0}))t}~^{C}_0D_{t}^{\alpha,\lambda}\left(e^{-(r(\mathbf{x_0})+J\rho U(\mathbf{x_0}))t}W(\rho,t,{\mathbf{x_0}})\right)-\lambda^{\alpha}W(\rho,t,{\mathbf{x_0}})\\
    &=(\Delta+\gamma)^{\frac{\beta}{2}} W(\rho,t,{\mathbf{x_0}})+f_w,
    \end{aligned}
\end{equation}
where
\begin{equation}\label{equinitial0f}
    \begin{aligned}
        f_w=&\lambda^{\alpha}G(\rho,0,{\mathbf{x_0}})e^{-(\lambda-r(\mathbf{x_0})-J\rho U(\mathbf{x_0}))t}+(\Delta+\gamma)^{\frac{\beta}{2}} \left(G(\rho,0,{\mathbf{x_0}})e^{-(\lambda-r(\mathbf{x_0})-J\rho U(\mathbf{x_0}))t}\right)\\
        &-\lambda^{\alpha} e^{(r(\mathbf{x_0})+J\rho U(\mathbf{x_0}))t}+\lambda e^{-(\lambda-r(\mathbf{x_0})-J\rho U(\mathbf{x_0}))t}~_0I_{t}^{1-\alpha}e^{\lambda t}.
    \end{aligned}
\end{equation}
According to \eqref{equFkequ1} and \eqref{equinitial0f}, we can get
\begin{equation*}
f_w(0)=0.
\end{equation*}

  Comparing Eq. \eqref{equFkequ2qc} with Eq. \eqref{equFkequ1}, we only need to consider how to develop the numerical scheme for \eqref{equFkequ1} with homogeneous initial condition, i.e., $G_0(\rho,\mathbf{x_0})=0$, since the numerical scheme can also be applied to  \eqref{equFkequ2qc}-\eqref{equinitial0f}. For convenience, we rewrite \eqref{equFkequ1} as
\begin{equation}\label{equFKtodis}
    \begin{aligned}
\mathcal{L}^{\alpha,\lambda}_t G(\rho,t,{\mathbf{x_0}})=(\Delta+\gamma)^{\frac{\beta}{2}} G(\rho,t,{\mathbf{x_0}})+f,
    \end{aligned}
\end{equation}
where
\begin{equation}\label{equdefL}
  \mathcal{L}^{\alpha,\lambda}_tG(\rho,t,{\mathbf{x_0}})=e^{(r(\mathbf{x_0})+J\rho U(\mathbf{x_0}))t}~^{C}_0D_{t}^{\alpha,\lambda}\left(e^{-(r(\mathbf{x_0})+J\rho U(\mathbf{x_0}))t}G(\rho,t,{\mathbf{x_0}})\right)-\lambda^{\alpha}G(\rho,t,{\mathbf{x_0}})
\end{equation}
and $f$ satisfies
\begin{equation}\label{equFkf00}
f(0)=0.
\end{equation}

\begin{remark}
Through the above derivation, it can be noted that one only needs to discretize Eq. \eqref{equFKtodis} to approximate Eq. \eqref{equFk}.
\end{remark}

\section{Space discretization and error analysis}
This section provides a finite difference discretization for the two-dimensional tempered fractional Laplacian on a bounded domain $\Omega=(-l,l)\times(-l,l)$ with extended homogeneous Dirichlet boundary conditions: $G(x,y)\equiv 0$ for $(x,y)\in\Omega^c$, which is based on our previous work \cite{Sun2018} and modifies the regularity requirement according to \cite{Zhang2017}. Afterwards, we give the error analysis of the space semi-discrete scheme. Here, we set the mesh sizes $h_1=l/N_{i}$ and $h_2=l/N_{j}$; denote grid points $x_i=ih_1$ and $y_j=jh_2$, for $-N_{i}\leq i \leq N_{i} $ and $ -N_{j}\leq j \leq N_{j}$; for convenience, let $N_i=N_j=N$, then we can set $h_1=h_2=h$.
\subsection{Spatial discretization}
According to \eqref{spatemdef}, we have
\begin{equation}\label{spatemdef2}
    \begin{split}
       -(\Delta+\gamma)^{\frac{\beta}{2}}G(x,y)=-c_{2,\beta}{\rm P.V.}\int\int_{\mathbb{R}^2}\frac{G(\xi,\eta)-G(x,y)}{\vartheta(x,y,\xi,\eta)}d\xi d\eta,
    \end{split}
\end{equation}
where
\begin{equation}\label{equdefvar}
\vartheta(x,y,\xi,\eta)=e^{\gamma\sqrt{(\xi-x)^2+(\eta-y)^2}}\left(\sqrt{(\xi-x)^2+(\eta-y)^2}\right)^{{2+\beta}}.
\end{equation}
To discretize $(\Delta+\gamma)^{\frac{\beta}{2}}G(x_p,y_q)$ for any $-N\leq p,\,q\leq N$, we first divide the integral domain into two parts for \eqref{spatemdef2}, i.e.,
\begin{equation}\label{spatemdef3}
    \begin{aligned}
        \int\int_{\mathbb{R}^2}\frac{G(\xi,\eta)-G(x_p,y_q)}{\vartheta(x_p,y_q,\xi,\eta)}d\xi d\eta=&\int\int_{\Omega}\frac{G(\xi,\eta)-G(x_p,y_q)}{\vartheta(x_p,y_q,\xi,\eta)}d\xi d\eta\\
        &+\int\int_{\mathbb{R}^2\backslash\Omega}\frac{G(\xi,\eta)-G(x_p,y_q)}{\vartheta(x_p,y_q,\xi,\eta)}d\xi d\eta.
    \end{aligned}
\end{equation}
It is easy to see that
 \begin{equation*}
    \begin{aligned}
        &\int\int_{\mathbb{R}^2\backslash\Omega}\frac{G(\xi,\eta)-G(x_p,y_q)}{\vartheta(x_p,y_q,\xi,\eta)}d\xi d\eta\\
        =&\int\int_{\mathbb{R}^2\backslash\Omega}\frac{G(\xi,\eta)}{\vartheta(x_p,y_q,\xi,\eta)}d\xi d\eta-\int\int_{\mathbb{R}^2\backslash\Omega}\frac{G(x_p,y_q)}{\vartheta(x_p,y_q,\xi,\eta)}d\xi d\eta\\
        =&-W_{p,q}^\infty G(x_p,y_q),
    \end{aligned}
\end{equation*}
where the fact $G(\xi,\eta)\equiv 0$ for $(\xi,\eta)\in\mathbb{R}^2\backslash \Omega$ is used and
\begin{equation*}
    \begin{aligned}
       % W_{p,q}^b=\int\int_{\mathbb{R}^2\backslash\Omega}\frac{G(\xi,\eta)}{\vartheta(x_p,y_q,\xi,\eta)}d\xi d\eta,~~
        W_{p,q}^\infty=\int\int_{\mathbb{R}^2\backslash\Omega}\frac{1}{\vartheta(x_p,y_q,\xi,\eta)}d\xi d\eta.
    \end{aligned}
\end{equation*}
Here, $W_{p,q}^\infty$ can be calculated by the function `integral2.m' in MATLAB.

Next, we formulate the first integral in \eqref{spatemdef3} as
\begin{equation}\label{equtodis}
\int\int_\Omega \frac{G(\xi,\eta)-G(x_p,y_q)}{\vartheta(x_p,y_q,\xi,\eta)} d\eta d\xi =\sum_{i=-N}^{N-1}\sum_{j=-N}^{N-1}\int_{\xi_{i}}^{\xi_{i+1}}\int_{\eta_{j}}^{\eta_{j+1}}\frac{G(\xi,\eta)-G(x_p,y_q)}{\vartheta(x_p,y_q,\xi,\eta)}d\eta d\xi,
\end{equation}
where $\xi_i=ih$ and $\eta_j=jh$.
Denote $\mathcal{I}_{p,q}=\{(p,q),(p-1,q),(p,q-1),(p-1,q-1)\}$. For \eqref{equtodis}, when $(i,j)\in \mathcal{I}_{p,q}$, we rewrite them as
\begin{equation}\label{equsingu1}
    \begin{aligned}
        &\int_{\xi_{p-1}}^{\xi_{p}}\int_{\eta_{q-1}}^{\eta_{q}}\frac{G(\xi,\eta)-G(x_p,y_q)}{\vartheta(x_p,y_q,\xi,\eta)}d\eta d\xi+\int_{\xi_{p}}^{\xi_{p+1}}\int_{\eta_{q-1}}^{\eta_{q}}\frac{G(\xi,\eta)-G(x_p,y_q)}{\vartheta(x_p,y_q,\xi,\eta)}d\eta d\xi\\
        &+\int_{\xi_{p-1}}^{\xi_{p}}\int_{\eta_{q}}^{\eta_{q+1}}\frac{G(\xi,\eta)-G(x_p,y_q)}{\vartheta(x_p,y_q,\xi,\eta)}d\eta d\xi+\int_{\xi_{p}}^{\xi_{p+1}}\int_{\eta_{q}}^{\eta_{q+1}}\frac{G(\xi,\eta)-G(x_p,y_q)}{\vartheta(x_p,y_q,\xi,\eta)}d\eta d\xi\\
        =&\int_{\xi_{p-1}}^{\xi_{p+1}}\int_{\eta_{q-1}}^{\eta_{q+1}}\frac{G(\xi,\eta)-G(x_p,y_q)}{\vartheta(x_p,y_q,\xi,\eta)}d\eta d\xi.
    \end{aligned}
\end{equation}
According to \eqref{equdefvar}, we obtain
\begin{equation}\label{symmetry}
\vartheta(x,y,x-\xi,y-\eta)=\vartheta(x,y,x+\xi,y-\eta)=\vartheta(x,y,x-\xi,y+\eta)=\vartheta(x,y,x+\xi,y+\eta).
\end{equation}
By \eqref{symmetry} and the symmetry of the integral domain and integrand, Eq. \eqref{equsingu1} can be rewritten as
\begin{equation}\label{equsingu2}
    \begin{aligned}
        &\int_{\xi_{p-1}}^{\xi_{p+1}}\int_{\eta_{q-1}}^{\eta_{q+1}}\frac{G(\xi,\eta)-G(x_p,y_q)}{\vartheta(x_p,y_q,\xi,\eta)}d\eta d\xi=\int_{0}^{h}\int_{0}^{h}\frac{\psi(x_p,y_q,\xi,\eta)}{e^{\gamma\sqrt{\xi^2+\eta^2}}\left(\sqrt{\xi^2+\eta^2}\right)^{{2+\beta}}}d\eta d\xi,
    \end{aligned}
\end{equation}
where
\begin{equation*}
\begin{aligned}
\psi(x_p,y_q,\xi,\eta)=&G(x_p+\xi,y_q+\eta)+G(x_p-\xi,y_q+\eta)\\&+G(x_p-\xi,y_q-\eta)+G(x_p+\xi,y_q-\eta)-4G(x_p,y_q).
\end{aligned}
\end{equation*}
Further denoting
\begin{equation*}
\phi_{\sigma}(\xi,\eta)=\frac{\psi(x_p,y_q,\xi,\eta)}{e^{\gamma\sqrt{\xi^2+\eta^2}}\left(\sqrt{\xi^2+\eta^2}\right)^{{\sigma}}}, ~~\sigma\in(\beta,2],
\end{equation*}
then Eq. \eqref{equsingu2} can be written as
\begin{equation}\label{equsingu3}
\begin{aligned}
 &\int_{\xi_{p-1}}^{\xi_{p+1}}\int_{\eta_{q-1}}^{\eta_{q+1}}\frac{G(\xi,\eta)-G(x_p,y_q)}{\vartheta(x_p,y_q,\xi,\eta)}d\eta d\xi
 =\int_{0}^{h}\int_{0}^{h}\phi_{\sigma}(\xi,\eta)(\xi^2+\eta^2)^{\frac{\sigma-2-\beta}{2}}d\eta d\xi.
\end{aligned}
\end{equation}
 Here, we use the weighted trapezoidal rule to approximate \eqref{equsingu3}, that is,
\begin{equation}\label{equdis11}
\begin{split}
\int_{0}^{h}\int_{0}^{h}&\phi_{\sigma}(\xi,\eta)(\xi^2+\eta^2)^{\frac{\sigma-2-\beta}{2}} d\eta d\xi\approx\\&\left\{
    \begin{split}
      &\frac{1}{4}\left(\lim_{(\xi,\eta)\rightarrow(0,0)}\phi_{\sigma}(\xi,\eta)+\phi_{\sigma}(\xi_0,\eta_1)+\phi_{\sigma}(\xi_1,\eta_1)+\phi_{\sigma}(\xi_1,\eta_0)\right)W_{0,0},~~\sigma\in(\beta,2);\\
      &\frac{1}{3}\left(\phi_{\sigma}(\xi_0,\eta_1)+\phi_{\sigma}(\xi_1,\eta_1)+\phi_{\sigma}(\xi_1,\eta_0)\right)W_{0,0},~~~~\sigma=2,
    \end{split}
    \right.
    \end{split}
\end{equation}
where
\begin{equation}\label{equdefG00}
W_{0,0}=\int_{0}^{h}\int_{0}^{h}(\xi^2+\eta^2)^{\frac{\sigma-2-\beta}{2}} d\eta d\xi.
\end{equation}
Assuming that $u$ is smooth enough, for $\sigma \in(\beta,2)$, there exists
\begin{equation*}
  \lim_{(\xi,\eta)\rightarrow(0,0)}\phi_{\sigma}(\xi,\eta)=0;
\end{equation*}
and further introduce a parameter %$k_\sigma$
\begin{equation*}
  k_\sigma=\left\{
  \begin{split}
  1~~~~~~~~~~~~~~&\sigma\in(\beta,2),\\
  \frac{4}{3}~~~~~~~~~~~~~~&\sigma=2.
  \end{split}
  \right.
\end{equation*}
So, Eq. \eqref{equdis11} can be rewritten as
\begin{equation}\label{eqsingularint}
  \int_{0}^{h}\int_{0}^{h}\phi_{\sigma}(\xi,\eta)(\xi^2+\eta^2)^{\frac{\sigma-2-\beta}{2}}d\eta d\xi\approx\frac{k_\sigma}{4}\left(\phi_{\sigma}(\xi_0,\eta_1)+\phi_{\sigma}(\xi_1,\eta_1)+\phi_{\sigma}(\xi_1,\eta_0)\right)W_{0,0}.
\end{equation}
For \eqref{equtodis}, when $(i,j) \notin \mathcal{I}_{p,q}$, denote $I_{p,q,p+i,q+j}$ as the approximation of
\begin{equation*}
 \int_{\xi_{p+i}}^{\xi_{p+i+1}}\int_{\eta_{q+j}}^{\eta_{q+j+1}}\frac{G(\xi,\eta)-G(x_p,y_q)}{\vartheta(x_p,y_q,\xi,\eta)}d\eta d\xi;
 \end{equation*}
 we use the bilinear interpolation to approximate $\frac{G(\xi,\eta)-G(x_p,y_q)}{e^{\gamma \sqrt{(\xi-x)^2+(\eta-y)^2}}}$ in $[\xi_{p+i},\xi_{p+i+1}]\times [\eta_{q+j},\eta_{q+j+1}]$ and get
\begin{equation*}
  \begin{split}
      I_{p,q,p+i,q+j}=&\left(\frac{G(\xi_{p+i},\eta_{q+j})-G(x_p,y_q)}{e^{\gamma h\sqrt{i^2+j^2}}}\right)\left(H^{\xi\eta}_{i,j}-\xi_{i+1}H^\eta_{i,j}-\eta_{j+1}H^\xi_{i,j}+\xi_{i+1}\eta_{j+1}H_{i,j}\right)\\
                        &-\left(\frac{G(\xi_{p+i+1},\eta_{q+j})-G(x_p,y_q)}{e^{\gamma h\sqrt{(i+1)^2+j^2}}}\right)\left(H^{\xi\eta}_{i,j}-\xi_{i}H^\eta_{i,j}-\eta_{j+1}H^\xi_{i,j}+\xi_{i}\eta_{j+1}H_{i,j}\right)\\
                        &-\left(\frac{G(\xi_{p+i},\eta_{q+j+1})-G(x_p,y_q)}{e^{\gamma h\sqrt{i^2+(j+1)^2}}}\right)\left(H^{\xi\eta}_{i,j}-\xi_{i+1}H^\eta_{i,j}-\eta_{j}H^\xi_{i,j}+\xi_{i+1}\eta_{j}H_{i,j}\right)\\
                        &+\left(\frac{G(\xi_{p+i+1},\eta_{q+j+1})-G(x_p,y_q)}{e^{\gamma h\sqrt{(i+1)^2+(j+1)^2}}}\right)\left(H^{\xi\eta}_{i,j}-\xi_{i}H^\eta_{i,j}-\eta_{j}H^\xi_{i,j}+\xi_{i}\eta_{j}H_{i,j}\right),
  \end{split}
\end{equation*}
where
\begin{equation}\label{equdefG}
\begin{split}
H_{i,j}&=\frac{1}{h^2}\int_{\xi_{i}}^{\xi_{i+1}}\int_{\eta_{j}}^{\eta_{j+1}}(\xi^2+\eta^2)^{\frac{-2-\beta}{2}} d\eta d\xi,\\
H^\xi_{i,j}&=\frac{1}{h^2}\int_{\xi_{i}}^{\xi_{i+1}}\int_{\eta_{j}}^{\eta_{j+1}}\xi(\xi^2+\eta^2)^{\frac{-2-\beta}{2}} d\eta d\xi,\\
H^\eta_{i,j}&=\frac{1}{h^2}\int_{\xi_{i}}^{\xi_{i+1}}\int_{\eta_{j}}^{\eta_{j+1}}\eta(\xi^2+\eta^2)^{\frac{-2-\beta}{2}}d\eta d\xi, \\
H^{\xi\eta}_{i,j}&=\frac{1}{h^2}\int_{\xi_{i}}^{\xi_{i+1}}\int_{\eta_{j}}^{\eta_{j+1}}\xi\eta(\xi^2+\eta^2)^{\frac{-2-\beta}{2}} d\eta d\xi.
\end{split}
\end{equation}
Here, $H_{i,j}$, $H^\xi_{i,j}$, $H^\eta_{i,j}$, $H^{\xi\eta}_{i,j}$ can be obtained by numerical integration. Denote
\begin{equation}\label{equdefW}
  \begin{split}
  W^1_{i,j}&=H^{\xi\eta}_{i,j}-\xi_{i+1}H^\eta_{i,j}-\eta_{j+1}H^\xi_{i,j}+\xi_{i+1}\eta_{j+1}H_{i,j},\\
  W^2_{i,j}&=-\left(H^{\xi\eta}_{i-1,j}-\xi_{i-1}H^\eta_{i-1,j}-\eta_{j+1}H^\xi_{i-1,j}+\xi_{i-1}\eta_{j+1}H_{i-1,j}\right),\\
  W^3_{i,j}&=-\left(H^{\xi\eta}_{i,j-1}-\xi_{i+1}H^\eta_{i,j-1}-\eta_{j-1}H^\xi_{i,j-1}+\xi_{i+1}\eta_{j-1}H_{i,j-1}\right),\\
  W^4_{i,j}&=H^{\xi\eta}_{i-1,j-1}-\xi_{i-1}H^\eta_{i-1,j-1}-\eta_{j-1}H^\xi_{i-1,j-1}+\xi_{i-1}\eta_{j-1}H_{i-1,j-1}.
  \end{split}
\end{equation}
Then, for $(i-p,j-q) \notin \mathcal{I}_{p,q}$, $I_{p,q,i,j}$ can be rewritten as
\begin{equation}\label{equIij2}
  \begin{split}
      I_{p,q,i,j}=&\frac{G(\xi_i,\eta_j)-G(x_p,y_q)}{e^{\gamma h\sqrt{(i-p)^2+(j-q)^2}}}W^1_{i-p,j-q}+\frac{G(\xi_{i+1},\eta_j)-G(x_p,y_q)}{e^{\gamma h\sqrt{(i+1-p)^2+(j-q)^2}}}W^2_{(i+1)-p,j-q}\\
      &+\frac{G(\xi_i,\eta_{j+1})-G(x_p,y_q)}{e^{\gamma h\sqrt{(i-p)^2+(j+1-q)^2}}}W^3_{i -p,(j+1)-q}
      \\
      &
      +\frac{G(\xi_{i+1},\eta_{j+1})-G(x_p,y_q)}{e^{\gamma h\sqrt{(i+1-p)^2+(j+1-q)^2}}}W^4_{(i+1)-p,(j+1)-q};
  \end{split}
\end{equation}
and Eq. \eqref{equtodis} becomes
\begin{equation}\label{equdiswithI}
\begin{split}
&\sum_{i=-N}^{N-1}\sum_{j=-N}^{N-1}\int_{\xi_{i}}^{\xi_{i+1}}\int_{\eta_{j}}^{\eta_{j+1}}\frac{G(\xi,\eta)-G(x_p,y_q)}{\vartheta(x_p,y_q,\xi,\eta)}d\eta d\xi \\
\approx&\frac{k_\sigma}{4}\left(\phi_{\sigma}(\xi_0,\eta_1)+\phi_{\sigma}(\xi_1,\eta_1)+\phi_{\sigma}(\xi_1,\eta_0)\right)W_{0,0}\\
&+\sum^{N-1,N-1}_{
\begin{subarray}{c}
i=-N,j=-N;\\(i,j)\notin \mathcal{I}_{p,q}
\end{subarray}}I_{p,q,i,j}.
\end{split}
\end{equation}
To make the form of weight $w^{\beta,\gamma}_{p,q,i,j}$ unified, according to \eqref{eqsingularint}, we denote
\begin{equation}\label{equdefWsp}
    \begin{split}
        &W^1_{-1,-1}=W^2_{1,-1}=W^3_{-1,1}=W^4_{1,1}=\frac{k_\sigma}{4}\frac{W_{0,0}}{\left(\sqrt{2}h\right)^{{\sigma}}},\\
        &W^1_{-1,0}=W^3_{-1,0}=W^2_{1,0}=W^4_{1,0}=\\
        &W^1_{0,-1}=W^2_{0,-1}=W^3_{0,1}=W^4_{0,1}=\frac{k_\sigma}{4}\frac{W_{0,0}}{\left(h\right)^{{\sigma}}}.
    \end{split}
\end{equation}
So, we have the discretization scheme
\begin{equation}\label{defdis}
  -(\Delta+\gamma)_h^{\frac{\beta}{2}}G_{p,q}=\sum_{i=-N}^{N}\sum_{j=-N}^{N}w^{\beta,\gamma}_{p,q,i,j}G_{i,j},
\end{equation}
where
\begin{equation}\label{equweightoffl}\footnotesize
\begin{aligned}
w^{\beta,\gamma}_{p,q,i,j}=&-c_{2,\beta}
\left\{
\begin{split}
&-\left(\sum_{i=-N+1,j=-N+1}^{i=N-1,j=N-1}\frac{W^1_{i-p,j-q}+W^2_{i-p,j-q}+W^3_{i-p,j-q}+W^4_{i-p,j-q}}{e^{\gamma h\sqrt{(i-p)^2+(j-q)^2}}}\right.&\\
&~~+\frac{W^1_{-N-p,-N-q}}{e^{\gamma h\sqrt{(-N-p)^2+(-N-q)^2}}}+\frac{W^2_{N-p,-N-q}}{e^{\gamma h\sqrt{(N-p)^2+(-N-q)^2}}}&\\
&~~+\frac{W^3_{-N-p,N-q}}{e^{\gamma h\sqrt{(-N-p)^2+(N-q)^2}}}+\frac{W^4_{N-p,N-q}}{e^{\gamma h\sqrt{(N-p)^2+(N-q)^2}}}&\\
&~~+\sum_{i=-N+1}^{N-1}\frac{W^1_{i-p,-N-q}+W^2_{i-p,-N-q}}{e^{\gamma h\sqrt{(i-p)^2+(-N-q)^2}}}&\\
&~~+\sum_{i=-N+1}^{N-1}\frac{W^3_{i-p,N-q}+W^4_{i-p,N-q}}{e^{\gamma h\sqrt{(i-p)^2+(N-q)^2}}}&\\
&~~+\sum_{j=-N+1}^{N-1}\frac{W^1_{-N-p,j-q}+W^3_{-N-p,j-q}}{e^{\gamma h\sqrt{(-N-p)^2+(j-q)^2}}}&\\
&\left.~~+\sum_{j=-N+1}^{N-1}\frac{W^2_{N-p,j-q}+W^4_{N-p,j-q}}{e^{\gamma h\sqrt{(N-p)^2+(j-q)^2}}}+W_{p,q}^{\infty}\right),&i=p,j=q;\\
&\frac{W^1_{i-p,j-q}+W^2_{i-p,j-q}}{e^{\gamma h\sqrt{(i-p)^2+(j-q)^2}}},&-N<i<N,j=-N;\\
&\frac{W^1_{i-p,j-q}+W^3_{i-p,j-q}}{e^{\gamma h\sqrt{(i-p)^2+(j-q)^2}}},&i=-N, -N<j<N;\\
&\frac{W^3_{i-p,j-q}+W^4_{i-p,j-q}}{e^{\gamma h\sqrt{(i-p)^2+(j-q)^2}}},&-N<i<N,j=N;\\
&\frac{W^2_{i-p,j-q}+W^4_{i-p,j-q}}{e^{\gamma h\sqrt{(i-p)^2+(j-q)^2}}},&i=N, -N<j<N;\\
&\frac{W^1_{i-p,j-q}}{e^{\gamma h\sqrt{(i-p)^2+(j-q)^2}}},& i=-N,j=-N;\\
&\frac{W^2_{i-p,j-q}}{e^{\gamma h\sqrt{(i-p)^2+(j-q)^2}}},& i=N,j=-N;\\
&\frac{W^3_{i-p,j-q}}{e^{\gamma h\sqrt{(i-p)^2+(j-q)^2}}},& i=-N,j=N;\\
&\frac{W^4_{i-p,j-q}}{e^{\gamma h\sqrt{(i-p)^2+(j-q)^2}}},&i=N,j=N;\\
&\frac{W^1_{i-p,j-q}+W^2_{i-p,j-q}+W^3_{i-p,j-q}+W^4_{i-p,j-q}}{e^{\gamma h\sqrt{(i-p)^2+(j-q)^2}}},&otherwise.
\end{split}
\right.
\end{aligned}
\end{equation}

\begin{remark}
Here, we discretize the tempered fractional Laplacian satisfying homogeneous Dirichlet boundary conditions, so \eqref{defdis} can be rewritten as
\begin{equation}\label{defdis2}
 -(\Delta+\gamma)_h^{\frac{\beta}{2}}G_{p,q}=\sum_{i=-N+1}^{N-1}\sum_{j=-N+1}^{dN-1}w^{\beta,\gamma}_{p,q,i,j}G_{i,j}.
\end{equation}
\end{remark}

\subsection{Error analysis for the space semi-discrete scheme}
First, we define an operator from a function to a vector
\begin{equation*}
  \begin{aligned}
    \mathcal{V}:f\rightarrow \bf{f},
  \end{aligned}
\end{equation*}
where $f$ denotes a function,
\begin{equation*}
  \textbf{f}=\{f_{-N+1,-N+1},f_{-N+1,-N+2},\ldots,f_{-N+1,N-1},f_{-N+2,-N+1},\ldots, f_{N-1,N-1}\},
\end{equation*}
and $f_{p,q}=f(x_p,y_q)$.

According to \eqref{equFKtodis} and \eqref{defdis2}, the spatially semi-discrete scheme can be written as
\begin{equation}
 \mathcal{L}_t^{\alpha,\lambda}G_{h,p,q}(t)=-
    \sum_{i=-N+1,j=-N+1}^{N-1,N-1}w^{\beta,\gamma}_{p,q,i,j}G_{h,i,j}(t)+f_{p,q}(t)~~~~{\rm for }~ -N<p,q<N,
\end{equation}
where $G_{h,p,q}(t)$ is the numerical solution at $(x_p,y_q)$ of the spatially semi-discrete scheme.
Denoting
\begin{equation*}
  \mathbf{G_h}(t)=\{G_{h,-N+1,-N+1}(t),G_{h,-N+1,-N+2}(t),\ldots,G_{h,-N+1,N-1}(t),\ldots, G_{h,N-1,N-1}(t)\},
\end{equation*}
then the spatially semi-discrete scheme can be rewritten as
\begin{equation}\label{equFkspatiallysemidis}
 \mathcal{L}_t^{\alpha,\lambda}\mathbf{G_{h}}(t)=(\Delta+\gamma)^{\frac{\beta}{2}}_h\mathbf{G_{h}}(t)+\mathcal{V}f(t).
\end{equation}
 According to Proposition \ref{procapLap} and taking the Laplace transform for \eqref{equFKtodis}, we get
\begin{equation*}
  \widetilde{\mathcal{L}}_t^{\alpha,\lambda}\tilde{G}=(\Delta+\gamma)^{\frac{\beta}{2}}\tilde{G}+\widetilde{f},
\end{equation*}
where $\tilde{G}$ and $\widetilde{f}$ denote the Laplace transforms of $G$ and $f$, respectively, and
\begin{equation*}
  \widetilde{\mathcal{L}}_t^{\alpha,\lambda}=\left((z+\lambda-r(\mathbf{x_0})-J\rho U(\mathbf{x_0}))^\alpha-\lambda^\alpha\right)=:\omega(z,\mathbf{x_0})=:\omega.
\end{equation*}
So we obtain
\begin{equation}\label{equLapofFk}
  \tilde{G}=\left(\omega-(\Delta+\gamma)^{\frac{\beta}{2}}\right)^{-1}\widetilde{f}=:\tilde{E}(z)\tilde{f}.
\end{equation}
%\begin{equation*}
%  ??\tilde{G}(\mathbf{x_0})=(\widetilde{\mathcal{L}}^{\alpha,\lambda}_{\mathbf{x_0}}-(\Delta+\gamma)^{\frac{\beta}{2}})^{-1}\widetilde{f}(\mathbf{x_0}).
%\end{equation*}
Similarly, taking the Laplace transform for \eqref{equFkspatiallysemidis}, we get
\begin{equation*}
   \omega_h\mathbf{\tilde{G}_{h}}(t)=(\Delta+\gamma)^{\frac{\beta}{2}}_h\mathbf{\tilde{G}_{h}}(t)+\mathcal{V}\tilde{f},
\end{equation*}
where
\begin{equation*}
  \omega_h\mathbf{\tilde{G}_{h}}={\rm diag}(\mathcal{V}\omega)\mathbf{\tilde{G}_{h}}
\end{equation*}
and `diag' denotes a diagonal matrix formed from its vector argument. The solution of spatially semi-discrete scheme is
 \begin{equation}\label{equLapofSdis}
  \mathbf{\tilde{G}_h}=\left(\omega_h-(\Delta+\gamma)_h^{\frac{\beta}{2}}\right)^{-1}\mathcal{V}\widetilde{f}=:\tilde{E_h}(z)\mathcal{V}\tilde{f}.
\end{equation}

 The solution of \eqref{equFKtodis} may therefore be obtained by the inverse Laplace transform of \eqref{equLapofFk}, with integration along a line parallel to and to the right of the imaginary axis. So, we need to choose a suitable integral contour to get the error estimate between \eqref{equFKtodis} and \eqref{equFkspatiallysemidis}. Let
 \begin{equation}\label{equthetabound}
   \frac{\pi}{2}\leq \theta <\frac{\pi}{2}+\theta_\epsilon,
 \end{equation}
 where
 \begin{equation}\label{equthetaepdef}
   \theta_\epsilon=\inf_{\mathbf{x_0}\in \bar{\Omega}}\arccos\left(\frac{|\rho U(\mathbf{x_0})|}{\sqrt{r(\mathbf{x_0})^2+(\rho U(\mathbf{x_0}))^2}}\right).
 \end{equation}
 \begin{lemma}\label{lemarg}
 For any $z\in\Sigma_\theta$ and $\mathbf{x_0}\in \bar{\Omega}$, $\omega(z,\mathbf{x_0})\in\Sigma_\theta$ holds, where $\Sigma_\theta=\{z\in\mathbb{C}:|\arg z|\leq \theta\}$.
 \end{lemma}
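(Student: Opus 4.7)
The plan is to reduce the lemma to two geometric claims after isolating the shift by $\lambda$. Write $\tilde{z}:=z-r(\mathbf{x_0})-J\rho U(\mathbf{x_0})$ so that $\omega(z,\mathbf{x_0})=(\tilde{z}+\lambda)^\alpha-\lambda^\alpha$. It then suffices to establish (i) $\tilde{z}\in\Sigma_\theta$ whenever $z\in\Sigma_\theta$, and (ii) $(\tilde{z}+\lambda)^\alpha-\lambda^\alpha\in\Sigma_\theta$ whenever $\tilde{z}\in\Sigma_\theta$.

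The key tool is the elementary geometric fact that for any two nonzero $u,v\in\mathbb{C}$ with $|\arg u-\arg v|<\pi$, the argument $\arg(u+v)$ lies in the closed arc between $\arg u$ and $\arg v$. For (i), the shift vector $-r-J\rho U$ has positive real part (as $r(\mathbf{x_0})<0$), so its argument has modulus $\arctan(|\rho U|/|r|)$, which by the definition \eqref{equthetaepdef} of $\theta_\epsilon$ is bounded by $\pi/2-\theta_\epsilon$. The hypothesis \eqref{equthetabound} then guarantees $\theta+(\pi/2-\theta_\epsilon)<\pi$, so the geometric fact applies to $z$ and $-r-J\rho U$; both arguments already lie in $[-\theta,\theta]$, and hence so does $\arg\tilde{z}$.

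For (ii), I would use the integral representation
\begin{equation*}
(\tilde{z}+\lambda)^\alpha-\lambda^\alpha=\alpha\tilde{z}\int_0^1(s\tilde{z}+\lambda)^{\alpha-1}\,ds,
\end{equation*}
which is valid because the segment $\{s\tilde{z}:s\in[0,1]\}$ avoids $(-\infty,-\lambda]$ under $|\arg\tilde{z}|\leq\theta<\pi$, so the principal branch of $(\cdot)^{\alpha-1}$ is analytic along the path. Setting $\phi:=\arg\tilde{z}$, a second application of the geometric fact gives $\arg(s\tilde{z}+\lambda)\in[\min(0,\phi),\max(0,\phi)]$ for each $s\in[0,1]$; multiplying by $\alpha-1<0$ places $\arg[(s\tilde{z}+\lambda)^{\alpha-1}]$ inside a closed convex cone of opening $(1-\alpha)|\phi|<\pi$ lying on the side of the real axis opposite to $\phi$. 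Since a closed convex cone of opening less than $\pi$ is preserved under positive linear combinations (and hence under integration of a cone-valued integrand), the integral itself lies in the same cone. Combining with $\arg(\alpha\tilde z)=\phi$, I obtain $\arg\omega\in[\alpha\phi,\phi]$ when $\phi\geq 0$ and $\arg\omega\in[\phi,\alpha\phi]$ when $\phi\leq 0$; in either case $|\arg\omega|\leq|\phi|\leq\theta$, and the boundary case $\phi=0$ gives $\omega>0$ directly.

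The main obstacle is the elementary geometric fact about $\arg(u+v)$ itself, which must be verified directly via a real/imaginary part calculation (splitting on whether $\mathrm{Re}(u+v)$ is positive or negative), because $\Sigma_\theta$ fails to be convex for $\theta>\pi/2$ and so no general convexity principle can be invoked. Once that fact is established, parts (i) and (ii) are a matter of careful bookkeeping of arguments and tracking of signs.
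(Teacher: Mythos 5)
Your proposal is correct, and its core step is proved by a genuinely different route than the paper's. The paper makes the same initial reduction you do (step (i) is in fact only asserted there, so your cone/argument verification via $|\arg(-r-J\rho U)|\le\pi/2-\theta_\epsilon$ and \eqref{equthetabound} actually supplies a detail the paper omits), but for step (ii) the paper normalizes $\lambda$ away, rewrites the claim as $(z+1)^\alpha\in\Sigma_\theta+1$, and proves it by comparing $|(z+1)^\alpha|$ with the modulus $|\bar z|$ of the point on the boundary ray of the shifted sector having the same argument, which leads to the trigonometric function $F(\bar\theta)$ and a case-by-case sign analysis of $F'$. You instead use the representation $(\tilde z+\lambda)^\alpha-\lambda^\alpha=\alpha\tilde z\int_0^1(s\tilde z+\lambda)^{\alpha-1}\,ds$ together with the fact that a closed convex cone of opening less than $\pi$ is preserved under integration, which yields the sharper and scale-free statement $|\arg\bigl((w+\lambda)^\alpha-\lambda^\alpha\bigr)|\le|\arg w|$ for $|\arg w|\le\theta<\pi$; this immediately gives the lemma, avoids the normalization and the derivative computation, and is arguably more transparent about why the sector is preserved. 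Two small points to make explicit when writing it up: the integral is nonzero (rotate by half the cone's opening so the integrand has real part bounded below by a positive multiple of its modulus, which is itself bounded away from zero on $[0,1]$), so its argument and the product $\arg(\alpha\tilde z)+\arg\!\int$ are well defined; and since $[\alpha\phi,\phi]\subset(-\pi,\pi)$ there is no wraparound when adding arguments, so $\omega\neq0$ and $|\arg\omega|\le|\phi|\le\theta$ as you claim.
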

 \begin{proof}
 We first introduce a notation $\Gamma_\theta=\{z\in \mathbb{C}:|\arg z|=\theta\}\bigcup\{0\}$. \eqref{equthetabound} and \eqref{equthetaepdef} yield that for any $z\in\Sigma_\theta$ and $\mathbf{x_0}\in \bar{\Omega}$,
  \begin{equation*}
    (z-r(\mathbf{x_0})-J\rho U(\mathbf{x_0}))\in\Sigma_\theta.
  \end{equation*}
So we just need to prove $((z+\lambda)^\alpha-\lambda^\alpha)\in\Sigma_\theta$ for any $z\in\Sigma_\theta$ to get $\omega(z,\mathbf{x_0})\in\Sigma_\theta$. By simple calculations, there are
  \begin{equation*}
    \left((z+\lambda)^\alpha-\lambda^\alpha\right)=\lambda^\alpha\left(\left(\frac{z}{\lambda}+1\right)^\alpha-1\right).
  \end{equation*}
So we just need to prove $((z+1)^\alpha-1)\in\Sigma_{\theta}$ for any $z\in\Sigma_\theta$, which is equal to prove $(z+1)^\alpha\in\Sigma_{\theta}+1=\{z\in \mathbb{C}: z-1\neq 0,|\arg (z-1)|<\theta\}$. Therefore, we need to prove $|(z+1)^\alpha|\geq|\bar{z}|$, where  $\bar{z}\in\bar{\Gamma}=\{z\in \mathbb{C}:z-1\in{\Gamma_{\theta}}\}$ and $|\arg \bar{z}|=|\arg (z+1)^\alpha|$. Assuming that $|\arg (z+1)|=\bar{\theta}$, we have $|\arg \bar{z}|=\alpha\bar{\theta}$ and $0<\bar{\theta}<\theta$. By simple calculations, we obtain
  \begin{equation*}
    \begin{aligned}
        &|(z+1)^{\alpha}|=|(z+1)|^{\alpha}\geq\left(\frac{\cos{\tilde{\theta}}}{\cos{(\bar{\theta}-\tilde{\theta})}}\right)^\alpha,\\
        &|\bar{z}|=\frac{\cos{\tilde{\theta}}}{\cos{(\alpha\bar{\theta}-\tilde{\theta})}},
    \end{aligned}
  \end{equation*}
  where $\tilde{\theta}=\theta-\pi/2$. Denote
  \begin{equation*}
    F(\bar{\theta})=\frac{1}{|\bar{z}|}\left(\frac{\cos{\tilde{\theta}}}{\cos{(\bar{\theta}-\tilde{\theta})}}\right)^\alpha=(\cos{\tilde{\theta}})^{\alpha-1}\frac{\cos(\alpha\bar{\theta}-\tilde{\theta})}{(\cos(\bar{\theta}-\tilde{\theta}))^\alpha}.
  \end{equation*}
  Then
  \begin{equation*}
    \begin{aligned}
      F'(\bar{\theta})=&\frac{\alpha(\cos{\tilde{\theta}})^{\alpha-1}}{(\cos(\bar{\theta}-\tilde{\theta}))^{\alpha+1}}
      \left(\sin(\tilde{\theta}-\alpha\bar{\theta})\cos(\bar{\theta}-\tilde{\theta})-\sin(\tilde{\theta}-\bar{\theta})\cos(\alpha\bar{\theta}-\tilde{\theta})\right)\\
      =&\frac{\alpha(\cos{\tilde{\theta}})^{\alpha-1}}{(\cos(\bar{\theta}-\tilde{\theta}))^{\alpha+1}}
       \left(\sin(\bar{\theta}-\tilde{\theta})\cos(\alpha\bar{\theta}-\tilde{\theta})-\sin(\alpha\bar{\theta}-\tilde{\theta})\cos(\bar{\theta}-\tilde{\theta})\right).
    \end{aligned}
  \end{equation*}
  Since $0\leq\bar{\theta}<\theta=\pi/2+\tilde{\theta}$, we have $\cos{\tilde{\theta}}>0$ and $\cos(\bar{\theta}-\tilde{\theta})>0$. Then, for  $ F'(\bar{\theta})$, there are the following discussions.
\begin{enumerate}[1)]
  \item When $\bar{\theta}-\tilde{\theta}>\alpha\bar{\theta}-\tilde{\theta}\geq0$, we have
  \begin{equation*}
    \begin{aligned}
      &\sin(\bar{\theta}-\tilde{\theta})>\sin(\alpha\bar{\theta}-\tilde{\theta})\geq0,\\
      &\cos(\alpha\bar{\theta}-\tilde{\theta})>\cos(\bar{\theta}-\tilde{\theta})>0,\\
    \end{aligned}
  \end{equation*}
  so there is
  \begin{equation*}
     F'(\bar{\theta})>0.
  \end{equation*}

 \item When $\bar{\theta}-\tilde{\theta}\geq0>\alpha\bar{\theta}-\tilde{\theta}$, we have
  \begin{equation*}
    \begin{aligned}
      &\sin(\bar{\theta}-\tilde{\theta})\geq0>\sin(\alpha\bar{\theta}-\tilde{\theta})\\
      &\cos(\alpha\bar{\theta}-\tilde{\theta}),\cos(\bar{\theta}-\tilde{\theta})>0,\\
    \end{aligned}
  \end{equation*}
  so it holds that
  \begin{equation*}
     F'(\bar{\theta})>0.
  \end{equation*}

  \item When $0>\bar{\theta}-\tilde{\theta}>\alpha\bar{\theta}-\tilde{\theta}$, we have
  \begin{equation*}
    \begin{aligned}
      &\sin(\tilde{\theta}-\alpha\bar{\theta})>\sin(\tilde{\theta}-\bar{\theta})>0\\
      &\cos(\bar{\theta}-\tilde{\theta})>\cos(\alpha\bar{\theta}-\tilde{\theta})>0,\\
    \end{aligned}
  \end{equation*}
  so there is
  \begin{equation*}
     F'(\bar{\theta})>0.
  \end{equation*}

  \item When $\bar{\theta}-\tilde{\theta}=\alpha\bar{\theta}-\tilde{\theta}$, that is $\bar{\theta}=0$, there is
  \begin{equation*}
    F'(0)=0.
  \end{equation*}
  \end{enumerate}
Finally, it leads to
  \begin{equation*}
    F(\bar{\theta})\geq F(0)=1
  \end{equation*}
  and
  \begin{equation*}
    |(z+1)^\alpha|\geq|\bar{z}|.
  \end{equation*}
 Therefore, we obtain $\omega(z,\mathbf{x_0})\in\Sigma_{\theta}$ for any $z\in\Sigma_\theta$.
 \end{proof}
 \begin{lemma}\label{lemboundcon}
 $\tilde{E}(z)$ is analytic and satisfies $\|\tilde{E}(z)\|_{L^2\rightarrow L^2}\leq M/|z|^{\alpha}$ in $\Sigma_{\theta,\kappa}$, where $\Sigma_{\theta,\kappa}=\{z\in\mathbb{C}:|z|>\kappa,|\arg z|\leq \theta\}$. Similarly, $\tilde{E}_h(z)$ is analytic and satisfies $\|\tilde{E}_h(z)\|_{l^2\rightarrow l^2}\leq M/|z|^{\alpha}$ in $\Sigma_{\theta,\kappa}$.
 \end{lemma}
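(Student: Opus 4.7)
The plan is to exploit that $A := -(\Delta+\gamma)^{\beta/2}$, equipped with the extended zero-Dirichlet boundary condition, is a non-negative self-adjoint operator on $L^2(\Omega)$: symmetrization recasts its quadratic form as $\frac{c_{2,\beta}}{2}\iint|G(\mathbf{x})-G(\mathbf{y})|^2 / (e^{\gamma|\mathbf{x}-\mathbf{y}|}|\mathbf{x}-\mathbf{y}|^{2+\beta})\,d\mathbf{x}\,d\mathbf{y}\geq 0$, so $\mathrm{spec}(A)\subset[0,\infty)$. Combined with Lemma~\ref{lemarg}, which places $\omega(z,\mathbf{x_0})$ uniformly in $\Sigma_\theta$, this should give the resolvent $\tilde{E}(z)=(A+\omega)^{-1}$ the claimed sectorial bound. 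The obstruction is that $\omega$ is the multiplication operator by the $\mathbf{x_0}$-dependent scalar $\omega(z,\mathbf{x_0})$, and hence does not commute with $A$, so I cannot apply scalar spectral calculus directly; my strategy is to treat $\omega$ as a small perturbation of the $\mathbf{x_0}$-independent reference $\omega_0(z):=(z+\lambda)^\alpha-\lambda^\alpha$, which also lies in $\Sigma_\theta$ by the generic argument used to prove Lemma~\ref{lemarg}.

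For the constant reference I would invoke the spectral theorem to obtain $\|(A+\omega_0)^{-1}\|_{L^2\to L^2}\leq \mathrm{dist}(-\omega_0,[0,\infty))^{-1}$. A short geometric calculation using $\omega_0\in\Sigma_\theta$ with $\theta<\pi$ yields $\mathrm{dist}(-\omega_0,[0,\infty))\geq|\omega_0|\sin(\pi-\theta)$, while $|\omega_0(z)|\geq c|z|^\alpha$ for $|z|$ beyond a threshold (since $(z+\lambda)^\alpha$ dominates $\lambda^\alpha$), so together $\|(A+\omega_0)^{-1}\|\leq C|z|^{-\alpha}$. Since $r(\mathbf{x_0})+J\rho U(\mathbf{x_0})$ is bounded on $\bar\Omega$, a first-order Taylor expansion of $(z+\lambda-r-J\rho U)^\alpha$ around $(z+\lambda)^\alpha$ yields the uniform bound $\|\omega(z,\cdot)-\omega_0(z)\|_{L^\infty(\bar\Omega)}\leq C|z|^{\alpha-1}$. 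Factoring $A+\omega=(A+\omega_0)(I+(A+\omega_0)^{-1}(\omega-\omega_0))$, where $\omega-\omega_0$ is now regarded as a multiplication operator, the perturbation has norm at most $C|z|^{-\alpha}\cdot C|z|^{\alpha-1}=C|z|^{-1}$, which becomes $<\tfrac12$ after enlarging the threshold to some $\kappa$; a Neumann series then inverts the bracket with norm at most $2$, and I obtain $\|\tilde{E}(z)\|_{L^2\to L^2}\leq M|z|^{-\alpha}$ on $\Sigma_{\theta,\kappa}$. Analyticity follows because $z\mapsto\omega(z,\cdot)$ is analytic as an $L^\infty(\bar\Omega)$-valued map, $z\mapsto(A+\omega_0(z))^{-1}$ is operator-valued analytic by scalar calculus, and the uniformly convergent Neumann series preserves analyticity.

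For the discrete bound on $\tilde{E}_h(z)=(\omega_h-(\Delta+\gamma)_h^{\beta/2})^{-1}$ I would repeat the argument verbatim with $A_h:=-(\Delta+\gamma)_h^{\beta/2}$ in place of $A$, once $A_h$ is verified to be symmetric positive semi-definite in the discrete inner product; symmetry is visible from \eqref{equweightoffl} in the bulk where the weights depend only on $(i-p,j-q)$, and non-negativity is a discrete analogue of the continuous quadratic-form identity. The main obstacle I anticipate is precisely the non-commutativity of $\omega$ and $A$ when $\theta>\pi/2$: the direct energy test $\langle(A+\omega)G,G\rangle$ cannot be closed because no single rotation $e^{-i\psi}$ makes $\mathrm{Re}(e^{-i\psi}\omega(z,\mathbf{x_0}))\geq c|\omega|$ uniformly in $\mathbf{x_0}$, so the perturbation-around-$\omega_0$ reduction is what does the real work.
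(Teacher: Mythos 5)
Your proof is correct, and it rests on the same basic mechanism as the paper's: freeze the non-commuting multiplier $\omega(z,\mathbf{x_0})$ at a constant, bound the frozen resolvent by $M|z|^{-\alpha}$, and absorb the leftover multiplication operator for $\kappa$ large (the paper's absorption step is your Neumann series in disguise). The differences are in the execution. The paper freezes at an arbitrary fixed point $\bar{\mathbf{x}}\in\bar{\Omega}$ and gets the frozen bound by citing the sectorial resolvent estimate for the positive self-adjoint operator $-(\Delta+\gamma)^{\frac{\beta}{2}}$, combined with the full Lemma \ref{lemarg} and the lower bound $|\omega(z,\mathbf{x_0})|\geq C|z|^{\alpha}$, whose proof (the geometric estimate $|z-r(\mathbf{x_0})-J\rho U(\mathbf{x_0})|\geq C|z|$ plus the mean value theorem) occupies the first half of the paper's argument; the remaining perturbation $\omega(z,\bar{\mathbf{x}})-\omega(z,\mathbf{x_0})$ is bounded only by a constant, which still suffices because it is multiplied by $M|z|^{-\alpha}$. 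You freeze instead at the $\mathbf{x_0}$-independent symbol $\omega_0(z)=(z+\lambda)^{\alpha}-\lambda^{\alpha}$, obtain the frozen bound self-containedly from the spectral theorem together with the sector geometry $\mathrm{dist}(-\omega_0,[0,\infty))\geq |\omega_0|\sin(\pi-\theta)$, and consequently need only the scalar portion of Lemma \ref{lemarg} and none of the $\theta_\epsilon$-geometry at this stage; moreover your Taylor/MVT bound $\|\omega(z,\cdot)-\omega_0(z)\|_{L^\infty(\bar{\Omega})}\leq C|z|^{\alpha-1}$ is sharper than the paper's $O(1)$ bound via $|a^{\alpha}-b^{\alpha}|\lesssim|a-b|^{\alpha}$, though both suffice. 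Two small points to tighten: when applying the MVT you should note that for $z\in\Sigma_{\theta,\kappa}$ with $\kappa$ large the segment joining $z+\lambda$ and $z+\lambda-r(\mathbf{x_0})-J\rho U(\mathbf{x_0})$ avoids the branch cut $(-\infty,0]$ (if $\mathrm{Re}\,z<0$ then $|\mathrm{Im}\,z|\geq\kappa\sin\theta$ dominates the bounded shift $\rho U(\mathbf{x_0})$); and for the discrete half, the symmetry and positive definiteness of $\mathbf{A_s}$ that your argument presupposes is exactly what the paper establishes in Appendix \ref{appendApos} through Lemma \ref{lemweporps2} and Gershgorin (diagonal dominance with negative off-diagonal weights), so you should invoke that rather than the sketched discrete quadratic-form identity; with it, your argument transfers verbatim, the perturbation now being the diagonal matrix $\omega_h-\omega_0(z)\mathbf{I}$ of norm at most $C|z|^{\alpha-1}$.
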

  \begin{proof}
  First, we prove $\frac{|z-r(\mathbf{x_0})-J\rho U(\mathbf{x_0})|}{|z|}\geq C$ for any $\mathbf{x_0}\in\bar{\Omega}$. Let
  \begin{equation*}
    \begin{aligned}
      &z=c_1e^{J\eta_1},~~~|\eta_1|<\theta;\\
      &r(\mathbf{x_0})+J\rho U(\mathbf{x_0})=c_2e^{J\eta_2},~~~|\eta_2|\geq\theta_\epsilon+\frac{\pi}{2}.
    \end{aligned}
  \end{equation*}
Then we have
  \begin{equation*}
    \begin{aligned}
      \frac{|z-r(\mathbf{x_0})-J\rho U(\mathbf{x_0})|}{|z|}=&\left|\frac{z-r(\mathbf{x_0})-J\rho U(\mathbf{x_0})}{z}\right|\\
      =&\left|\frac{c_1e^{J\eta_1}-c_2e^{J\eta_2}}{c_1e^{J\eta_1}}\right|\\
      =&\left|1-\frac{c_2}{c_1}e^{J(\eta_2-\eta_1)}\right|.
    \end{aligned}
  \end{equation*}
    By using the relationship between the complex point $1$ and the line $z=|z|e^{J(\eta_2-\eta_1)}$ in complex plane, we obtain
  \begin{equation}\label{equapp1bd1}
    \frac{|z-r(\mathbf{x_0})-J\rho U(\mathbf{x_0})|}{|z|}\geq \sin\left(\theta_\epsilon+\frac{\pi}{2}-\theta\right)\geq C,
  \end{equation}
  where $C$ is a positive constant. Let $z_\epsilon=z-r(\mathbf{x_0})-J\rho U(\mathbf{x_0})$. So there exists a constant $c_\epsilon>0$ satisfying $|z_\epsilon|\geq c_\epsilon>0$ and $|\arg z_\epsilon|\leq \theta$. Moreover, by the mean value theorem, we have
  \begin{equation*}
    \begin{aligned}
      \frac{|(\lambda+z_\epsilon)^\alpha-\lambda^\alpha|}{|z_\epsilon|^\alpha}\geq\frac{C|(\lambda+\varrho z_\epsilon)|^{\alpha-1}| z_\epsilon|}{|z_\epsilon|^\alpha}\geq C\frac{|\varrho z_\epsilon|^{1-\alpha}}{|(\lambda+\varrho z_\epsilon)|^{1-\alpha}},
    \end{aligned}
  \end{equation*}
  where $\varrho\in(0,1)$. Since $|z_\epsilon|\geq c_\epsilon>0$, there exists
  \begin{equation}\label{equapp1bd2}
    \begin{aligned}
      \frac{|(\lambda+z_\epsilon)^\alpha-\lambda^\alpha|}{|z_\epsilon|^\alpha}&\geq C.
    \end{aligned}
  \end{equation}
  According to \eqref{equapp1bd1} and \eqref{equapp1bd2}, we arrive at
  \begin{equation}\label{equomegaest}
    \begin{aligned}
        |\omega|=|(\lambda+z_\epsilon)^\alpha-\lambda^\alpha|\geq C|z_\epsilon|^\alpha\geq C|z|^\alpha.
    \end{aligned}
  \end{equation}
%So we have $|\omega(z,\mathbf{x_0})|\geq|z|^\alpha$.

Finally, according to \cite{ZhangDeng2017}, we know that  $-(\Delta+\gamma)^{\frac{\beta}{2}}$ with homogeneous Dirichlet boundary conditions is a positive definite and self-adjoint operator in $L^2(\Omega)$. Thus $-(\Delta+\gamma)^{\frac{\beta}{2}}$ generates an analytic semigroup \cite{Acosta2018,Lubich1996}, which implies that for any $\hat{\theta}$ and $M=M_{\hat{\theta}}$, we have the resolvent estimate
  \begin{equation*}
  \left\|\left(zI-(\Delta+\gamma)^{\frac{\beta}{2}}\right)^{-1}\right\|_{L^2\rightarrow L^2}\leq\frac{M}{|z|}~ ~~{\rm for} ~z\in \Sigma_{\hat{\theta}}=\{z\in\mathbb{C}:z\neq 0,|\arg z|<\hat{\theta}\}.
  \end{equation*}
By Lemma \ref{lemarg}, the condition $\Sigma_{\theta,\kappa}\subseteq\Sigma_{\theta}$ and \eqref{equomegaest}, for fixed $\bar{\mathbf{x}}\in \bar{\Omega}$, we obtain
\begin{equation}\label{equlemfixxest}
  \left\|\left(\omega(z,\bar{\mathbf{x}})-(\Delta+\gamma)^{\frac{\beta}{2}}\right)^{-1}\right\|_{L^2\rightarrow L^2}\leq\frac{M}{|z|^\alpha}~~~ {\rm  for} ~~~z\in \Sigma_{\theta,\kappa}.
  \end{equation}
  Let
  \begin{equation}
 \left (\omega(z,\mathbf{x_0})-(\Delta+\gamma)^{\frac{\beta}{2}}\right)G=F.
  \end{equation}
Then we have
  \begin{equation}\label{equlemanyx}
    \left(\omega(z,\bar{\mathbf{x}})-(\Delta+\gamma)^{\frac{\beta}{2}}\right)G=F+(\omega(z,\bar{\mathbf{x}})-\omega(z,\mathbf{x_0}))G.
  \end{equation}
  It is easy to get that
   \begin{equation}\label{equlemanyxest1}
     \begin{split}
       \|F+(\omega(z,\bar{\mathbf{x}})-\omega(z,\mathbf{x_0}))G\|_{L_2}&\leq \|F\|_{L_2}+\|(\omega(z,\bar{\mathbf{x}})-\omega(z,\mathbf{x_0}))G\|_{L_2}\\
       &\leq \|F\|_{L_2}+\sup_{\mathbf{x_0}\in \bar{\Omega}}|(r(\mathbf{x_0})-r(\bar{\mathbf{x}}))-J\rho(U(\mathbf{x_0})-U(\bar{\mathbf{x}}))|^{\alpha}\|G\|_{L_2}\\
       &\leq \|F\|_{L_2}+\|G\|_{L_2}.\\
     \end{split}
   \end{equation}
  Combining \eqref{equlemfixxest}, \eqref{equlemanyx} and \eqref{equlemanyxest1}, we have
  \begin{equation*}
  \|G\|_{L_2}\leq  M|z|^{-\alpha}\|F\|_{L_2}+M|z|^{-\alpha}\|G\|_{L_2}.
  \end{equation*}
  When $\kappa$ is large enough, namely, $|z|$ is large enough, we have $\|G\|_{L_2}\leq  M|z|^{-\alpha}\|F\|_{L_2}$ and
  \begin{equation*}
  \left\|\left(\omega(z,\mathbf{x_0})-(\Delta+\gamma)^{\frac{\beta}{2}}\right)^{-1}\right\|_{L^2\rightarrow L^2}\leq\frac{M}{|z|^\alpha}~~~ {\rm for} ~~~z\in \Sigma_{\theta,\kappa}.
  \end{equation*}
  Next, to prove $\|\tilde{E}_h(z)\|_{l^2\rightarrow l^2}\leq M/|z|^{\alpha}$, we need to show the positive definiteness and self-adjoint of $(\Delta+\gamma)_h^{\frac{\beta}{2}}$, i.e.,  the matrix $\mathbf{A_s}$ generated by discretizing the tempered fractional Laplacian is positive definite and symmetric there, which is proved in Appendix \ref{appendApos}. The rest of the proof of $\tilde{E_h}(z)$ is similar to the case of $\tilde{E}(z)$. %The proof is completed.
  \end{proof}

  By simple calculation, we get the following lemma.
  \begin{lemma}\label{lemDeri}
    $\|\tilde{E}'(z)\|_{L^2\rightarrow L^2}\leq M/|z|^{\alpha+1}$ in $\Sigma_{\theta,\kappa}$, similarly, $\|\tilde{E}_h'(z)\|_{l^2\rightarrow l^2}\leq M/|z|^{\alpha+1}$ in $\Sigma_{\theta,\kappa}$.
  \end{lemma}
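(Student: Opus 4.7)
The plan is to use the operator-identity $\tilde{E}'(z)=-\tilde{E}(z)\,\omega'(z,\mathbf{x_0})\,\tilde{E}(z)$, which follows by differentiating $\tilde{E}(z)\bigl(\omega(z,\mathbf{x_0})-(\Delta+\gamma)^{\beta/2}\bigr)=I$ with respect to $z$. Here the factor $\omega'(z,\mathbf{x_0})$ is the multiplication operator by the $\mathbf{x_0}$-dependent scalar function
\begin{equation*}
\omega'(z,\mathbf{x_0})=\alpha\bigl(z+\lambda-r(\mathbf{x_0})-J\rho U(\mathbf{x_0})\bigr)^{\alpha-1}.
\end{equation*}
Taking $L^2\to L^2$ norms then gives $\|\tilde{E}'(z)\|_{L^2\to L^2}\le \|\tilde{E}(z)\|_{L^2\to L^2}^2\;\|\omega'(z,\cdot)\|_{L^\infty(\bar{\Omega})}$, and the first factor is already controlled by $M/|z|^\alpha$ via Lemma \ref{lemboundcon}. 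So the task reduces to the scalar bound $\|\omega'(z,\cdot)\|_{L^\infty(\bar{\Omega})}\le C|z|^{\alpha-1}$.

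To establish this scalar bound I would reuse the estimate from the proof of Lemma \ref{lemboundcon}: inequality \eqref{equapp1bd1} gives $|z-r(\mathbf{x_0})-J\rho U(\mathbf{x_0})|\ge C|z|$ uniformly in $\mathbf{x_0}\in\bar{\Omega}$ for $z\in\Sigma_\theta$. Adding $\lambda$ does not hurt this lower bound provided $|z|>\kappa$ is sufficiently large (choose $\kappa$ so that $|z|\gg\lambda$; the constant $\kappa$ may need to be enlarged compared with Lemma \ref{lemboundcon}, but this is harmless). Thus $|z+\lambda-r(\mathbf{x_0})-J\rho U(\mathbf{x_0})|\ge C|z|$ on $\Sigma_{\theta,\kappa}$, and because $\alpha-1<0$,
\begin{equation*}
|\omega'(z,\mathbf{x_0})|=\alpha\bigl|z+\lambda-r(\mathbf{x_0})-J\rho U(\mathbf{x_0})\bigr|^{\alpha-1}\le C|z|^{\alpha-1},
\end{equation*}
uniformly in $\mathbf{x_0}\in\bar{\Omega}$. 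Combining the three pieces yields
\begin{equation*}
\|\tilde{E}'(z)\|_{L^2\to L^2}\le \frac{M^2}{|z|^{2\alpha}}\cdot C|z|^{\alpha-1}=\frac{M}{|z|^{\alpha+1}},
\end{equation*}
as required. Analyticity of $\tilde{E}'(z)$ on $\Sigma_{\theta,\kappa}$ is inherited from that of $\tilde{E}(z)$ (Lemma \ref{lemboundcon}) together with the analyticity of the scalar symbol $\omega(z,\mathbf{x_0})$ in $z$.

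The discrete bound for $\tilde{E}_h'(z)$ is entirely parallel: one differentiates $\tilde{E}_h(z)\bigl(\omega_h-(\Delta+\gamma)_h^{\beta/2}\bigr)=I$ to get $\tilde{E}_h'(z)=-\tilde{E}_h(z)\,\omega'_h\,\tilde{E}_h(z)$, where $\omega'_h=\mathrm{diag}(\mathcal{V}\omega'(z,\cdot))$ is again multiplication by the same scalar symbol restricted to grid points. Then $\|\omega'_h\|_{l^2\to l^2}\le \|\omega'(z,\cdot)\|_{L^\infty(\bar{\Omega})}\le C|z|^{\alpha-1}$, and combining with the discrete resolvent bound $\|\tilde{E}_h(z)\|_{l^2\to l^2}\le M/|z|^\alpha$ from Lemma \ref{lemboundcon} gives $\|\tilde{E}_h'(z)\|_{l^2\to l^2}\le M/|z|^{\alpha+1}$.

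The only mildly delicate point is ensuring the lower bound $|z+\lambda-r(\mathbf{x_0})-J\rho U(\mathbf{x_0})|\gtrsim |z|$ uniformly in $\mathbf{x_0}\in\bar{\Omega}$, i.e.\ verifying that the shift by $\lambda$ does not spoil the sectorial estimate used in Lemma \ref{lemboundcon}; this is the step I would write out carefully, while the rest amounts to one line of the Leibniz rule applied to the resolvent identity.
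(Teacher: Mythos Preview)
Your argument is correct and is precisely the ``simple calculation'' the paper alludes to (the paper omits the proof entirely, stating only that it follows by direct computation). The resolvent-derivative identity $\tilde{E}'(z)=-\tilde{E}(z)\,\omega'(z,\cdot)\,\tilde{E}(z)$ combined with the bound $|\omega'(z,\mathbf{x_0})|\le C|z|^{\alpha-1}$ and Lemma~\ref{lemboundcon} is the intended route; your care in verifying that the shift by $\lambda$ preserves the lower bound $|z+\lambda-r(\mathbf{x_0})-J\rho U(\mathbf{x_0})|\gtrsim |z|$ on $\Sigma_{\theta,\kappa}$ (via the triangle inequality and enlarging $\kappa$ if necessary) fills in the only nontrivial detail.
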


\begin{theorem}\label{thmtrunerror}
Denote $(\Delta+\gamma)^{\frac{\beta}{2}}_{h}$ as a finite difference approximation of the tempered fractional Laplacian $(\Delta+\gamma)^{\frac{\beta}{2}}$.
Suppose that $G(x,y)\in C^{2}(\bar{\Omega})$ is supported in an open set $\Omega\subset\mathbb{R}^2$. Then, there are
\begin{equation*}
\begin{aligned}
&\left\|\mathcal{V}(\Delta+\gamma)^{\frac{\beta}{2}}G(x,y)-(\Delta+\gamma)_{h}^{\frac{\beta}{2}}\mathcal{V}G\right\|_{l_\infty}\leq Ch^{2-\beta},\\
&\left\|\mathcal{V}(\Delta+\gamma)^{\frac{\beta}{2}}G(x,y)-(\Delta+\gamma)_{h}^{\frac{\beta}{2}}\mathcal{V}G\right\|_{l_2}\leq Ch^{2-\beta},~~~~~{\rm for}~\beta\in (0,2)
\end{aligned}
\end{equation*}
with $C$ being a positive constant depending on $\beta$ and $\gamma$.
\end{theorem}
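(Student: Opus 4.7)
At each grid node $(x_p,y_q)$ the truncation error splits into three contributions arising from the decomposition in \eqref{spatemdef3} and \eqref{equtodis}: (i) the exterior integral over $\mathbb{R}^2\setminus\Omega$, (ii) the four cells forming $[x_{p-1},x_{p+1}]\times[y_{q-1},y_{q+1}]$ (the near field), and (iii) the remaining cells inside $\Omega$ (the far field). Because $G$ vanishes identically outside $\Omega$, the exterior term $-W_{p,q}^{\infty}G(x_p,y_q)$ is evaluated exactly, so only the near-field and far-field contributions need to be estimated. The plan is to prove a pointwise bound of order $h^{2-\beta}$ on each one; the $l^2$ estimate then follows automatically from $\|\cdot\|_{l^2}\leq |\Omega|^{1/2}\|\cdot\|_{l^\infty}$ on the bounded grid.

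\textbf{Near-field error.} Using the symmetry \eqref{symmetry}, the four adjacent cells collapse into a single integral $\int_0^h\!\int_0^h\phi_\sigma(\xi,\eta)(\xi^2+\eta^2)^{(\sigma-2-\beta)/2}d\eta\,d\xi$ for any $\sigma\in(\beta,2]$. Since $G\in C^2(\bar\Omega)$, a second-order Taylor expansion of $G$ at $(x_p,y_q)$ shows that the first-order terms cancel in the combination $\psi$, giving $|\psi(x_p,y_q,\xi,\eta)|\leq C(\xi^2+\eta^2)$ with controlled mixed partial derivatives. Consequently, for $\sigma\in(\beta,2)$ the function $\phi_\sigma$ extends continuously through the origin with value $0$; the weighted trapezoidal rule \eqref{eqsingularint} therefore incurs an error controlled by $h^2$ times derivatives of $\phi_\sigma$ against the integrable weight $W_{0,0}$, which itself is $O(h^{\sigma-\beta})$. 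Choosing $\sigma$ in an optimal way (or invoking the endpoint $\sigma=2$ via the $k_\sigma$ correction so that $\phi_2(\xi_0,\eta_0)$ does not enter) yields a near-field error of order $h^{2-\beta}$, uniformly in $(p,q)$.

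\textbf{Far-field error.} On each cell $C_{i,j}=[\xi_{p+i},\xi_{p+i+1}]\times[\eta_{q+j},\eta_{q+j+1}]$ with $(i,j)\notin\mathcal{I}_{p,q}$, the integrand is regular and bilinear interpolation of the quotient $(G(\xi,\eta)-G(x_p,y_q))/e^{\gamma|(\xi,\eta)-(x_p,y_q)|}$ has pointwise error bounded by $Ch^2\|G\|_{C^2(\bar\Omega)}$ uniformly in the cell (using smoothness of $G$ and of the exponential factor). Integrating the interpolation remainder against the separated kernel $(\xi^2+\eta^2)^{-(2+\beta)/2}$ and summing the cell contributions reduces, via comparison with the continuous integral, to bounding $h^2\int_{\{|\mathbf{x}|\geq h\}\cap\Omega}|\mathbf{x}|^{-(2+\beta)}d\mathbf{x}$. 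Passage to polar coordinates gives $O(h^{-\beta})$, so the overall far-field contribution is $O(h^{2-\beta})$. Combining the two estimates yields the claimed $l^\infty$ bound, from which the $l^2$ bound is immediate.

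\textbf{Main obstacle.} The delicate part is the transition region: cells adjacent to $(x_p,y_q)$ but lying just outside $\mathcal{I}_{p,q}$, where the kernel is still nearly singular yet no special treatment is applied. The cleanest fix is a dyadic decomposition of the cells by distance to $(x_p,y_q)$, combined with a cell-level interpolation estimate that exploits the cancellation $G(\xi,\eta)-G(x_p,y_q)=O(|(\xi,\eta)-(x_p,y_q)|)$ to produce an additional factor of distance and so tame the nearly singular weight. The tempered exponential $e^{-\gamma|\cdot|}$ makes the outer tail trivially summable but contributes nothing near the origin, so all the real work lies in the short-range regime; the argument mirrors the one-dimensional construction of \cite{Zhang2017} extended to two dimensions as in our prior work \cite{Sun2018}.
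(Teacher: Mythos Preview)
Your strategy is essentially the paper's: split the error at each node into the near-field contribution (the four cells forming $[x_{p-1},x_{p+1}]\times[y_{q-1},y_{q+1}]$) and the far-field contribution (all other cells), bound each by $Ch^{2-\beta}$ pointwise, and infer the $l^2$ estimate from the $l^\infty$ one. The exterior term is indeed exact.

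However, you overcomplicate both halves. For the near field, the paper does \emph{not} run a quadrature-error argument on the weighted trapezoidal rule. It simply bounds the exact integral and the rule value separately: using $|\phi_\sigma(\xi,\eta)|\le C(\xi^2+\eta^2)^{1-\sigma/2}$ (this is where $G\in C^2(\bar\Omega)$ enters), both the integrand $\phi_\sigma\cdot(\xi^2+\eta^2)^{(\sigma-2-\beta)/2}$ and the constant $\frac{k_\sigma}{4}(\phi_\sigma(\xi_0,\eta_1)+\phi_\sigma(\xi_1,\eta_0)+\phi_\sigma(\xi_1,\eta_1))$ times the same weight are dominated by $C(\xi^2+\eta^2)^{-\beta/2}+Ch^{2-\sigma}(\xi^2+\eta^2)^{(\sigma-2-\beta)/2}$, and after the rescaling $\xi=\bar p h$, $\eta=\bar q h$ each integral is $O(h^{2-\beta})$ since both exponents exceed $-2$. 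No optimal tuning of $\sigma$ is required.

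For the far field, the ``main obstacle'' you flag is not actually present. The key point (Lemma~\ref{lemfunc2error}) is that the second partials of $(G(\xi,\eta)-G(x_p,y_q))/e^{\gamma|(\xi,\eta)-(x_p,y_q)|}$ are \emph{uniformly} bounded on $\bar\Omega$, because the mean-value theorem supplies a factor $|(\xi,\eta)-(x_p,y_q)|$ from the numerator which cancels the mild singularity generated by differentiating the exponential. Hence the bilinear interpolation error is $\le Ch^2$ uniformly on every far-field cell, even those touching the central square. Summing $Ch^2$ against the kernel over $\Omega\setminus[x_{p-1},x_{p+1}]\times[y_{q-1},y_{q+1}]\subset\{|z|\ge h\}$ gives $Ch^{2-\beta}$ directly, with no dyadic decomposition and no extra cancellation argument.
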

\begin{proof}
The details of the proof are given in Appendix A by modifying the proof in \cite{Sun2018}.	
%The proof can be got by modifying the proof in \cite{Sun2018}. The details can be found in Appendix A.
\end{proof}

Before giving the error estimate between \eqref{equFKtodis} and \eqref{equFkspatiallysemidis}, we introduce the following lemma.
\begin{lemma}\label{lemspaerrorLap}
  For $z\in \Sigma_{\theta}$ and the solution $G(\cdot,\mathbf{x})\in C^2(\bar{\Omega})$, there is the estimate
  \begin{equation*}
    \|\mathcal{V}\tilde{E}(z)-\tilde{E}_h(z)\mathcal{V}\|_{L^2\rightarrow l^2}\leq Ch^{2-\beta}|z|^{-\alpha}.
  \end{equation*}
\end{lemma}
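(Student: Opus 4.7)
The plan is to derive an error equation for the spatial semi-discrete problem in the Laplace domain and then combine the resolvent bound in Lemma \ref{lemboundcon} with the truncation estimate in Theorem \ref{thmtrunerror}. Fix $\tilde f \in L^2(\Omega)$ and set $\tilde G=\tilde E(z)\tilde f$ so that
\begin{equation*}
\omega(z,\mathbf{x_0})\tilde G-(\Delta+\gamma)^{\frac{\beta}{2}}\tilde G=\tilde f,
\end{equation*}
and set $\tilde G_h=\tilde E_h(z)\mathcal V\tilde f$, which satisfies
\begin{equation*}
\omega_h\tilde G_h-(\Delta+\gamma)_h^{\frac{\beta}{2}}\tilde G_h=\mathcal V\tilde f.
\end{equation*}

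The crucial algebraic observation is that, because $\omega_h$ is defined as the diagonal matrix whose entries are $\omega(z,\mathbf{x_0})$ sampled at the grid nodes, one has the commutation identity $\mathcal V(\omega\,\tilde G)=\omega_h\mathcal V\tilde G$. Applying $\mathcal V$ to the continuous equation and subtracting the discrete one therefore yields
\begin{equation*}
(\omega_h-(\Delta+\gamma)_h^{\frac{\beta}{2}})(\mathcal V\tilde G-\tilde G_h)=\mathcal V(\Delta+\gamma)^{\frac{\beta}{2}}\tilde G-(\Delta+\gamma)_h^{\frac{\beta}{2}}\mathcal V\tilde G,
\end{equation*}
so that
\begin{equation*}
\mathcal V\tilde E(z)\tilde f-\tilde E_h(z)\mathcal V\tilde f=\tilde E_h(z)\bigl[\mathcal V(\Delta+\gamma)^{\frac{\beta}{2}}\tilde G-(\Delta+\gamma)_h^{\frac{\beta}{2}}\mathcal V\tilde G\bigr].
\end{equation*}

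From here I take the $l^2$ norm, apply Lemma \ref{lemboundcon} to estimate $\|\tilde E_h(z)\|_{l^2\to l^2}\le M|z|^{-\alpha}$, and apply Theorem \ref{thmtrunerror} to estimate the truncation term by $Ch^{2-\beta}$ using the hypothesis $G(\cdot,\mathbf{x})\in C^2(\bar\Omega)$. Multiplying these bounds gives $\|\mathcal V\tilde E(z)\tilde f-\tilde E_h(z)\mathcal V\tilde f\|_{l^2}\le Ch^{2-\beta}|z|^{-\alpha}\|\tilde f\|_{L^2}$, which is the desired operator norm inequality.

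The main obstacle is calibrating the truncation estimate in step three so that the dependence on $\tilde G$ is encoded through $\tilde f$ rather than just a free $C^2$ constant: Theorem \ref{thmtrunerror} controls the pointwise and $l^2$ truncation error by a constant times $h^{2-\beta}$, but that constant implicitly involves $\|\tilde G\|_{C^2(\bar\Omega)}$. The stated hypothesis $G(\cdot,\mathbf{x})\in C^2(\bar\Omega)$ must therefore be interpreted as a uniform $C^2$-regularity assumption on the resolvent map so that $\|\tilde G\|_{C^2(\bar\Omega)}\lesssim\|\tilde f\|_{L^2}$ in $\Sigma_\theta$; under that reading the combination above yields exactly the claimed bound, and it is this resolvent regularity (rather than any further manipulation of $\omega$) that carries the proof.
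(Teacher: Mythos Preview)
Your argument is essentially the paper's own proof: derive the error equation $(\omega_h-(\Delta+\gamma)_h^{\beta/2})(\mathcal V\tilde G-\tilde G_h)=\mathcal V(\Delta+\gamma)^{\beta/2}\tilde G-(\Delta+\gamma)_h^{\beta/2}\mathcal V\tilde G$ from the commutation $\omega_h\mathcal V\tilde G=\mathcal V\omega\tilde G$, then apply Lemma~\ref{lemboundcon} and Theorem~\ref{thmtrunerror} and normalize $\|\tilde f\|_{L^2}=1$. Your additional paragraph on the implicit $\|\tilde G\|_{C^2(\bar\Omega)}$-dependence of the truncation constant is in fact more scrupulous than the paper, which simply writes ``Taking $\|\tilde f\|_{L^2}=1$ results in\ldots'' without isolating that dependence; the paper treats the $C^2$ hypothesis on the solution as an a~priori assumption whose seminorm is absorbed into the generic constant $C$, exactly the reading you propose.
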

\begin{proof}
  According to \eqref{equLapofFk} and \eqref{equLapofSdis}, there are
  \begin{equation}\label{equopeest3}
  \left(\omega-(\Delta+\gamma)^{\frac{\beta}{2}}\right)\tilde{G}=\tilde{f}
  \end{equation}
  and
  \begin{equation}\label{equopeest4}
  \left(\omega_h-(\Delta+\gamma)^{\frac{\beta}{2}}_h\right)\mathbf{\tilde{G}_h}=\mathcal{V}\tilde{f}.
  \end{equation}
Performing $\mathcal{V}$ on both sides of \eqref{equopeest3} leads to
  \begin{equation}\label{equopeest5}
    \mathcal{V}\left(\omega-(\Delta+\gamma)^{\frac{\beta}{2}}\right)\tilde{G}=\mathcal{V}\tilde{f}.
  \end{equation}
  Subtracting \eqref{equopeest5} from \eqref{equopeest4} results in
  \begin{equation*}
        \left(\omega_h -(\Delta+\gamma)^{\frac{\beta}{2}}_h\right)\mathbf{\tilde{G}_h}-\mathcal{V}\left(\omega-(\Delta+\gamma)^{\frac{\beta}{2}}\right)\tilde{G}=0.
  \end{equation*}
  Then we obtain
  \begin{equation*}
  \begin{array}{l}
     \left(\omega_h-(\Delta+\gamma)^{\frac{\beta}{2}}_h\right)\mathbf{\tilde{G}_h}-\left(\omega_h -(\Delta+\gamma)^{\frac{\beta}{2}}_h\right)\mathcal{V}\tilde{G}
     \\
     +\left(\omega_h -(\Delta+\gamma)^{\frac{\beta}{2}}_h\right)\mathcal{V}\tilde{G}-\mathcal{V}\left(\omega -(\Delta+\gamma)^{\frac{\beta}{2}}\right)\tilde{G}=0.
     \end{array}
  \end{equation*}
Rearranging the terms leads to
  \begin{equation}\label{equlemsppf1}
    \left(\omega_h -(\Delta+\gamma)^{\frac{\beta}{2}}_h\right)(\mathbf{\tilde{G}_h}-\mathcal{V}\tilde{G})=(\Delta+\gamma)^{\frac{\beta}{2}}_{h}\mathcal{V}\tilde{G}-\mathcal{V}(\Delta+\gamma)^{\frac{\beta}{2}}\tilde{G},
  \end{equation}
  where the fact $\omega_h \mathcal{V}\tilde{G}=\mathcal{V}\omega \tilde{G}$ is used. According to Theorem \ref{thmtrunerror}, Lemma \ref{lemboundcon} and \eqref{equlemsppf1}, we get
  \begin{equation}
    \begin{aligned}
        \|\mathcal{V}\tilde{G}-\mathbf{\tilde{G}_h}\|_{l^2}&=\left\|\left(\omega_h -(\Delta+\gamma)^{\frac{\beta}{2}}_h\right)^{-1}\left((\Delta+\gamma)^{\frac{\beta}{2}}_{h}\mathcal{V}\tilde{G}-\mathcal{V}(\Delta+\gamma)^{\frac{\beta}{2}}\tilde{G}\right)\right\|_{l^2}\\
        &\leq \left\|\left(\omega_h -(\Delta+\gamma)^{\frac{\beta}{2}}_h\right)^{-1}\right\|_{l^2\rightarrow l^2}\left\|\left((\Delta+\gamma)^{\frac{\beta}{2}}_{h}\mathcal{V}\tilde{G}-\mathcal{V}(\Delta+\gamma)^{\frac{\beta}{2}}\tilde{G}\right)\right\|_{l^2}\\
        &\leq Ch^{2-\beta}|z|^{-\alpha}.
    \end{aligned}
  \end{equation}
  Combining \eqref{equLapofFk} and \eqref{equLapofSdis} leads to
  \begin{equation*}
    \|\mathcal{V}\tilde{E}(z)\tilde{f}-\tilde{E}_h(z)\mathcal{V}\tilde{f}\|_{l^2}\leq Ch^{2-\beta}|z|^{-\alpha}.
  \end{equation*}
  Taking $\|\tilde{f}\|_{L^2}=1$ results in
   \begin{equation*}
    \|\mathcal{V}\tilde{E}(z)-\tilde{E}_h(z)\mathcal{V}\|_{L^2\rightarrow l^2}\leq Ch^{2-\beta}|z|^{-\alpha},
  \end{equation*}
  which completes the proof.
\end{proof}

According to \eqref{equLapofFk}, \eqref{equLapofSdis} and Lemma \ref{lemboundcon}, we have
 \begin{equation}\label{equorgsollap}
   G(t)=\frac{1}{2\pi J}\int_{\Gamma_{\theta,\kappa}}e^{zt}\tilde{E}(z)\tilde{f}dz
 \end{equation}
 and
 \begin{equation}\label{equspasemsollap}
   \mathbf{G_h}(t)=\frac{1}{2\pi J}\int_{\Gamma_{\theta,\kappa}}e^{zt}\tilde{E}_h(z)\mathcal{V}\tilde{f}dz,
 \end{equation}
 where $\Gamma_{\theta,\kappa}=\{z\in \mathbb{C}:|z|\geq \kappa,|\arg z|=\theta\}\bigcup\{z\in \mathbb{C}:|z|=\kappa,|\arg z|\leq\theta\}$.

\begin{theorem}\label{thmspatialEr}
  Let $G$ be the solution of Eq. \eqref{equFKtodis} satisfying $G(\cdot,x)\in C^2(\bar{\Omega})$  and $\mathbf{G_h}$ be the solution of Eq. \eqref{equFkspatiallysemidis}. Suppose $f\in L^\infty(0,T,L^2(\Omega))$ with $\int_{0}^t(t-s)^{\alpha-1}\|f(s)\|_{L^2}ds\leq\infty$. Then, we have the following error estimate between the solutions of \eqref{equFKtodis} and \eqref{equFkspatiallysemidis}
  \begin{equation*}
    \|\mathcal{V}G(t)-\mathbf{G_h}(t)\|_{l^2}\leq Ch^{2-\beta}\int_{0}^t(t-s)^{\alpha-1} \|f(s)\|_{L^2(\Omega)}ds.
  \end{equation*}
\end{theorem}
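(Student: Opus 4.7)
The plan is to exploit the inverse Laplace representations \eqref{equorgsollap} and \eqref{equspasemsollap}, combine them with the Laplace-side operator bound of Lemma \ref{lemspaerrorLap}, and recast the spatial error as a convolution against a kernel whose operator norm decays like $t^{\alpha-1}$.

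First, since $\mathcal{V}$ commutes with time integration and $\widetilde{\mathcal{V}f}(z)=\mathcal{V}\tilde f(z)$, the convolution theorem gives
\[
\mathcal{V}G(t)-\mathbf{G_h}(t)=\int_0^t\bigl(\mathcal{V}E(t-s)-E_h(t-s)\mathcal{V}\bigr)f(s)\,ds,
\]
where $E(t)$ and $E_h(t)$ are the inverse Laplace transforms of $\tilde{E}(z)$ and $\tilde{E}_h(z)$ realized as Bromwich integrals over $\Gamma_{\theta,\kappa}$. The task then reduces to estimating the kernel $K(t):=\mathcal{V}E(t)-E_h(t)\mathcal{V}$ in the operator norm from $L^2(\Omega)$ to $l^2$.

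Second, I would pull the norm inside the contour integral and insert Lemma \ref{lemspaerrorLap} to get
\[
\|K(t)\|_{L^2\to l^2}\leq\frac{Ch^{2-\beta}}{2\pi}\int_{\Gamma_{\theta,\kappa}}|e^{zt}|\,|z|^{-\alpha}\,|dz|.
\]
The contour integral is dispatched by splitting $\Gamma_{\theta,\kappa}$ into the circular arc $\{|z|=\kappa\}$ and the two rays $\{\arg z=\pm\theta\}$, and choosing $\kappa=1/t$. On the arc, $|e^{zt}|\leq e$, and the length is $O(\kappa)$, contributing $Ct^{\alpha-1}h^{2-\beta}$. On the rays, the key point is that $\theta>\pi/2$ by \eqref{equthetabound}, so $\operatorname{Re}(z)=|z|\cos\theta<0$ and $|e^{zt}|$ decays exponentially; the substitution $u=-t\cos(\theta)|z|$ reduces each ray integral to an incomplete Gamma expression bounded by $Ct^{\alpha-1}$. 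Combining the two pieces yields $\|K(t)\|_{L^2\to l^2}\leq Ch^{2-\beta}t^{\alpha-1}$.

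Finally, inserting this kernel bound back into the convolution identity gives
\[
\|\mathcal{V}G(t)-\mathbf{G_h}(t)\|_{l^2}\leq\int_0^t\|K(t-s)\|_{L^2\to l^2}\,\|f(s)\|_{L^2}\,ds\leq Ch^{2-\beta}\int_0^t(t-s)^{\alpha-1}\|f(s)\|_{L^2(\Omega)}\,ds,
\]
which is the asserted estimate; the integral on the right is finite by the standing hypothesis on $f$. The main obstacle is the contour-integral bound on $\|K(t)\|$. Once $\cos\theta<0$ is exploited to guarantee exponential decay on the rays and the choice $\kappa=1/t$ balances the arc contribution, the remainder of the argument is a routine Bromwich estimate of the kind familiar from analytic-semigroup theory, and it is exactly Lemma \ref{lemspaerrorLap} that supplies the $h^{2-\beta}$ factor at every frequency $z$.
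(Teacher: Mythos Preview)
Your proposal is correct and follows essentially the same route as the paper: both express the spatial error via the contour representations \eqref{equorgsollap}--\eqref{equspasemsollap}, rewrite it as a time convolution of $f$ against the kernel whose Laplace symbol is $\mathcal{V}\tilde E(z)-\tilde E_h(z)\mathcal{V}$, invoke Lemma~\ref{lemspaerrorLap} to extract the $h^{2-\beta}|z|^{-\alpha}$ bound, and then estimate the resulting Bromwich integral over $\Gamma_{\theta,\kappa}$ to obtain the $t^{\alpha-1}$ decay. The paper compresses the contour estimate into a single line, whereas you spell out the arc/ray split and the choice $\kappa=1/t$; just bear in mind that the resolvent bounds of Lemma~\ref{lemboundcon} (and hence Lemma~\ref{lemspaerrorLap}) are only asserted on $\Sigma_{\theta,\kappa}$ for $\kappa$ sufficiently large, so for $t$ bounded away from $0$ you should keep $\kappa$ at that fixed threshold rather than letting it shrink---a cosmetic adjustment that does not affect the conclusion.
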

\begin{proof}
  According to \eqref{equorgsollap} and \eqref{equspasemsollap}, we have
  \begin{equation*}
  \mathcal{V}G(t)-\mathbf{G_h}(t)=\frac{1}{2\pi J}\int_{\Gamma_{\theta,\kappa}}e^{zt}\left(\mathcal{V}\tilde{E}(z)-\tilde{E}_h(z)\mathcal{V}\right)\tilde{f}dz.
  \end{equation*}
From Lemma \ref{lemspaerrorLap} and the property of convolution, there exists
  \begin{equation*}
    \begin{aligned}
      \|\mathcal{V}G(t)-\mathbf{G_h}(t)\|_{l^2}&\leq\left\|\frac{1}{2\pi J}\int_{\Gamma_{\theta,\kappa}}e^{zt}\left(\mathcal{V}\tilde{E}(z)-\tilde{E}(z)_h\mathcal{V}\right)dz\ast f\right\|_{l^2}\\
      &\leq\left\|\frac{1}{2\pi J}\int_{\Gamma_{\theta,\kappa}}e^{zt}\left(\mathcal{V}\tilde{E}(z)-\tilde{E}_h(z)\mathcal{V}\right)dz\right\|_{L^2\rightarrow l^2}\ast \|f\|_{L^2(\Omega)}\\
      &\leq Ch^{2-\beta}\int_{\Gamma_{\theta,\kappa}}e^{-sin(\theta-\pi/2)|z|t}|z|^{-\alpha}d|z|\ast \|f\|_{L^2(\Omega)}\\
      &\leq Ch^{2-\beta}t^{\alpha-1}\ast \|f\|_{L^2(\Omega)},
    \end{aligned}
  \end{equation*}
  which completes the proof.
\end{proof}

\section{Time discretization and error analysis}

In this section, we use the backward Euler (BE) method and the second-order backward difference (SBD) method to discretize the time tempered fractional substantial derivative and obtain the first-order and second-order schemes. Following that, we perform the error analyses for these two schemes.

\subsection{BE scheme and error analysis}

First, let the time step size $\tau=T/L$, $L\in\mathbb{N}$, $t_i=i\tau$, $i=0,1,\ldots,L$ and $0=t_0<t_1<\cdots<t_L=T$. Taking $\delta(\zeta)=(1-\zeta)$ and using convolution quadrature, for \eqref{equdefL} we have the discretization scheme
\begin{equation}\label{equBEdisO1}
    \mathcal{L}^{\alpha,\lambda}_tv(\mathbf{x_0},t_{n})\approx\frac{1}{\tau^\alpha}\sum_{j=0}^n d^{\alpha,\lambda}_{n-j}(\mathbf{x_0})v_j(\mathbf{x_0}),
\end{equation}
where
\begin{equation*}
  v_j(\mathbf{x_0})=v(\mathbf{x_0},t_j)
\end{equation*}
and
\begin{equation}\label{equO1genw1}
    \sum_{j=0}^\infty d^{\alpha,\lambda}_{j}(\mathbf{x_0})\zeta^j=\left(1-\zeta+\tau\lambda-\tau r(\mathbf{x_0})-\tau J\rho U(\mathbf{x_0})\right)^\alpha-(\tau\lambda)^\alpha.
\end{equation}
Here
\begin{equation*}
    \begin{aligned}
    d^{\alpha,\lambda}_{j}(\mathbf{x_0})=\left\{
        \begin{aligned}
            &(1+\tau\lambda-\tau r(\mathbf{x_0})-\tau J\rho U(\mathbf{x_0}))^\alpha-(\tau\lambda)^\alpha,&j=0;\\
            &-\alpha(1+\tau\lambda-\tau r(\mathbf{x_0})-\tau J\rho U(\mathbf{x_0}))^{\alpha-1},&j=1;\\
            &-\frac{(\alpha-j+1)d^{\alpha,\lambda}_{j-1}(\mathbf{x_0})}{j(1+\tau\lambda-\tau r(\mathbf{x_0})-\tau J\rho U(\mathbf{x_0}))},&j>1.
        \end{aligned}
    \right.
    \end{aligned}
\end{equation*}
\begin{remark}
By simple calculations, $\Re{d^{\alpha,\lambda}_{0}(\mathbf{x_0})}>0$ holds, where $\Re{z}$ denotes the real part of $z$. Here we denote $d^{\alpha,\lambda}_{j}(x_p,y_q)$ as $d^{\alpha,\lambda}_{j,p,q}$. The coefficients $d^{\alpha,\lambda}_{j}(\mathbf{x_0})$ can also be calculated by Fast Fourier Transform (FFT).
\end{remark}
Then the time semi-discrete scheme is as follows,
\begin{equation}\label{equO1ltimesemidis}
\frac{1}{\tau^\alpha}\sum_{k=0}^n d^{\alpha,\lambda}_{k}G(\rho,t_{n-k})=
    (\Delta+\gamma)^{\frac{\beta}{2}} G(\rho,t_{n})+f(\rho,t_{n}).
\end{equation}
Combining \eqref{equO1ltimesemidis} with \eqref{defdis}, we obtain the fully discrete scheme of Eq. \eqref{equFKtodis}, i.e., BE scheme
\begin{equation}\label{equO1fulldis}
    \frac{1}{\tau^\alpha}\sum_{k=0}^n {\rm diag}(\mathcal{V}d^{\alpha,\lambda}_{k})\mathbf{G_h^{n-k}}=(\Delta+\gamma)^{\frac{\beta}{2}}_h\mathbf{G_h^n}+\mathcal{V}f^n,
\end{equation}
where $f^n=f(t_n)$, $G^{n}_{h,p,q}$ is the numerical solution at $(x_p,y_q,t_n)$ for fully discrete scheme and
\begin{equation*}
  \mathbf{G_h^{n}}=(G^{n}_{h,-N+1,-N+1},G^{n}_{h,-N+1,-N+2},\ldots,G^{n}_{h,N-1,N-1}).
\end{equation*}
\begin{theorem}\label{thmBEsemER}
	Let $\mathbf{G_h}$ and $\mathbf{G^n_h}$ be the solutions of Eq. \eqref{equFkspatiallysemidis} and Eq. \eqref{equO1fulldis}, respectively. If $f\in L^{\infty}(0,T,L^2(\Omega))$ with $\int_{0}^t(t-s)^{\alpha-1}\|\mathcal{V}f'(s)\|_{l^2}ds\leq\infty$ for $t\in(0,T]$, then we have
	\begin{equation*}
	\|\mathbf{G_h}(t_n)-\mathbf{G^n_h}\|_{l^2}\leq C\tau\int_0^{t_n}(t_n-s)^{\alpha-1}\|\mathcal{V}f'(s)\|_{l^2}ds.
	\end{equation*}
\end{theorem}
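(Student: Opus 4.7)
The plan is to adapt the Laplace-transform / generating-function technique of Lubich to the present setting, in which the symbol $\omega(z,\mathbf{x_0})$ is position-dependent through $r$ and $U$. I start from the contour representation $\mathbf{G_h}(t_n) = \frac{1}{2\pi J}\int_{\Gamma_{\theta,\kappa}} e^{zt_n}\tilde{E}_h(z)\mathcal{V}\tilde{f}(z)\,dz$ supplied by \eqref{equspasemsollap}. For the fully discrete scheme \eqref{equO1fulldis}, multiplying by $\zeta^n$ and summing over $n\geq 0$, using $\mathbf{G_h^0}=0$ together with the identity $\sum_{j\geq 0}d^{\alpha,\lambda}_j(\mathbf{x_0})\zeta^j = \tau^\alpha \omega((1-\zeta)/\tau,\mathbf{x_0})$ from \eqref{equO1genw1}, yields the generating-function identity $\tilde{\mathbf{G_h}}(\zeta) = \tilde{E}_h(\delta(\zeta)/\tau)\,\mathcal{V}\tilde{f}_\tau(\zeta)$, where $\tilde{f}_\tau(\zeta)=\sum_{n\geq 0}f(t_n)\zeta^n$. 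Cauchy's formula and the substitution $\zeta=e^{-z\tau}$ then recast $\mathbf{G_h^n}$ as an integral over a contour $\Gamma^\tau_{\theta,\kappa}$ that can be deformed onto the truncation of $\Gamma_{\theta,\kappa}$ by $|\Im z|\leq\pi/\tau$.

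Since $f(0)=0$ by \eqref{equFkf00}, integration by parts gives $\tilde{f}(z)=z^{-1}\widetilde{f'}(z)$ and summation by parts provides the discrete counterpart. This is the standard device for trading one power of $|z|$ for one time derivative of $f$, and is what ultimately produces the factor $\|\mathcal{V}f'\|_{l^2}$ in the bound. After this reduction, modulo an exponentially decaying truncation tail, the error can be written schematically as
\begin{equation*}
\mathbf{G_h}(t_n)-\mathbf{G_h^n}
= \frac{1}{2\pi J}\int_{\Gamma^\tau_{\theta,\kappa}} e^{zt_n}\bigl[\tilde{E}_h(z)-\tilde{E}_h(\delta(e^{-z\tau})/\tau)\bigr]\mathcal{V}\widetilde{f'}(z)\,\frac{dz}{z}.
\end{equation*}

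The key pointwise estimate I would establish is
\begin{equation*}
\bigl\|\tilde{E}_h(z)-\tilde{E}_h(\delta(e^{-z\tau})/\tau)\bigr\|_{l^2\to l^2}\leq C\tau|z|^{1-\alpha},
\end{equation*}
which follows from the Taylor expansion $\delta(e^{-z\tau})/\tau = z + O(z^2\tau)$ along $\Gamma^\tau_{\theta,\kappa}$ combined with Lemma \ref{lemDeri}; the diagonal structure of $\omega_h$ lets this be reduced to the scalar bound uniformly in $\mathbf{x_0}\in\bar{\Omega}$. Inserting this into the displayed integral, the factor $|z|^{1-\alpha}/|z|=|z|^{-\alpha}$ together with the same contour-length parametrization used in the proof of Theorem \ref{thmspatialEr} and the convolution theorem produces the stated bound $C\tau\int_0^{t_n}(t_n-s)^{\alpha-1}\|\mathcal{V}f'(s)\|_{l^2}\,ds$.

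The main obstacle will be verifying that $\delta(e^{-z\tau})/\tau$ stays inside the sector $\Sigma_{\theta,\kappa}$ uniformly as $z$ ranges over the truncated $\Gamma_{\theta,\kappa}$, so that Lemmas \ref{lemboundcon} and \ref{lemDeri} can legitimately be applied to $\tilde{E}_h$ at the shifted argument. The position dependence of $\omega$ adds extra bookkeeping, but the uniform sector estimate in Lemma \ref{lemarg} guarantees that the diagonal operator $\omega_h(\delta(e^{-z\tau})/\tau)$ inherits the necessary resolvent bounds pointwise in $\mathbf{x_0}$; the remaining work is a careful tail estimate showing that the truncation to $|\Im z|\leq \pi/\tau$ contributes at most an $O(\tau)$ error of the required form.
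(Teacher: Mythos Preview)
Your plan is essentially the same as the paper's: contour representation for $\mathbf{G_h}(t_n)$, generating-function/Cauchy-integral representation for $\mathbf{G_h^n}$ with the substitution $\zeta=e^{-z\tau}$ and deformation to $\Gamma^\tau_{\theta,\kappa}$, use of $f(0)=0$ to trade a factor $z^{-1}$ for $f'$, and the key pointwise bound coming from $\delta(e^{-z\tau})/\tau=z+O(\tau z^2)$ together with Lemma~\ref{lemDeri}. The only bookkeeping difference is that instead of pairing $\widetilde{f'}(z)$ with a discrete counterpart in frequency (your ``schematic'' formula is not literally correct on the discrete side, since summation by parts produces $z_\tau^{-1}$ rather than $z^{-1}$), the paper writes both solutions as time-domain convolutions with $\mathcal{V}f'$ via $f=1*f'$ and $\hat{E}_\tau*f=(\hat{E}_\tau*1)*f'$, so that the comparison reduces to the kernel error $\bigl\|\tfrac{1}{2\pi J}\int_{\Gamma_{\theta,\kappa}}e^{zt'}\tilde{E}_h(z)z^{-1}dz-Q(t',1)\bigr\|$ and the relevant pointwise estimate becomes $\|\tilde{E}_h(z)z^{-1}-\tilde{E}_h(z_\tau)z_\tau^{-1}\|\leq C\tau|z|^{-\alpha}$.
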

\begin{proof}
	According to \eqref{equspasemsollap}, we have
	\begin{equation}\label{equO1exactsol}
	\mathbf{G_h}(t_n)=\left(\frac{1}{2\pi J}\int_{\Gamma_{\theta,\kappa}}e^{zt}\tilde{E}_h(z)z^{-1}dz\ast \mathcal{V}f'(t)\right)(t_n),\\
	\end{equation}
	where the fact $f(t)=f(0)+1\ast f'(t)$, the property of convolution and \eqref{equFkf00} are used.
 To get the solution of \eqref{equO1fulldis}, we need to multiply by $\zeta^n$ and sum from $0$ to $\infty$, so
\begin{equation*}
  \sum_{n=0}^{\infty}\left(\frac{1}{\tau^\alpha}\sum_{k=0}^n {\rm diag}(\mathcal{V}d^{\alpha,\lambda}_{k})\mathbf{G_h^{n-k}}\right)\zeta^n=\sum_{n=0}^{\infty}\left((\Delta+\gamma)^{\frac{\beta}{2}}_h\mathbf{G_h^n}\zeta^n+\mathcal{V}f^n\zeta^n\right).
\end{equation*}
According to \eqref{equO1genw1}, we obtain
\begin{equation*}
        \left(\omega_h\left(\frac{1-\zeta}{\tau}\right)-(\Delta+\gamma)^{\frac{\beta}{2}}_h\right)\sum_{n=0}^{\infty}\zeta^n\mathbf{G_h^n}=\sum_{n=0}^{\infty}\zeta^n\mathcal{V}f^n,
\end{equation*}
which implies
\begin{equation*}
\sum_{n=0}^{\infty}\zeta^n\mathbf{G_h^n}= \left(\omega_h\left(\frac{1-\zeta}{\tau}\right)-(\Delta+\gamma)^{\frac{\beta}{2}}_h\right)^{-1}\sum_{n=0}^{\infty}\zeta^n\mathcal{V}f^n=\tilde{E}_h\left (\frac{1-\zeta}{\tau}\right )\sum_{n=0}^{\infty}\zeta^n\mathcal{V}f^n.
\end{equation*}
Thus
\begin{equation*}
	\mathbf{G_h^{n}}=\tau\sum_{j=0}^{n}E_{n-j}\mathcal{V}f^j,
\end{equation*}
where
\begin{equation}\label{equdefofejo1}
	\tilde{E}_h\left (\frac{1-\zeta}{\tau}\right )=\tau\sum_{j=0}^{\infty}E_j\zeta^j.
\end{equation}
For convenience, we denote
\begin{equation*}
	Q(t',v)=\tau\cdot {\rm diag} \left(\sum_{0\leq t_j\leq t'}E_j\mathcal{V}v(t'-t_j)\right)={\rm diag} \left((\hat{E}_\tau\ast \mathcal{V}v)(t')\right),
\end{equation*}
 where $\hat{E}_\tau=\tau\sum\limits_{j=0}^{\infty}E_j\delta_{t_j}$, with $\delta_t$ the delta function concentrated at $ t $. By the fact $f(t)=f(0)+1\ast f'(t)$, the property of convolution and \eqref{equFkf00}, we have
 \begin{equation}\label{equO1numersol}
 	\mathbf{G_h^{n}}=(\hat{E}_\tau\ast\mathcal{V}f)(t_n)={\rm diag}(\hat{E}_\tau\ast 1)\ast(\mathcal{V}f')(t_n)=(Q(t',1)\ast\mathcal{V}f'(t'))(t_n).
 \end{equation}
 According to \eqref{equO1exactsol}, \eqref{equO1numersol} and the property of convolution, we have
 \begin{equation}
 \begin{aligned}
 \mathbf{G_h}(t_n)-\mathbf{G_h^n}=\left(\left(\frac{1}{2\pi J}\int_{\Gamma_{\theta,\kappa}}e^{zt'}\tilde{E}_h(z)z^{-1}dz-Q(t',1)\right)\ast \mathcal{V}f'(t')\right)(t_n).
 \end{aligned}
 \end{equation}

To get the desired bound, we need to consider the error between $\frac{1}{2\pi J}\int_{\Gamma_{\theta,\kappa}}e^{zt'}\tilde{E}_h(z)z^{-1}dz$ and $Q(t',1)$ for $ t'\in[t_{n-1},t_n),~n\geq 1$. As for $n=1$, we have
\begin{equation*}
\left \|Q(t',1)-\frac{1}{2\pi J}\int_{\Gamma_{\theta,\kappa}}e^{zt'}\tilde{E}_h(z)z^{-1}dz\right \|_{l_2\rightarrow\l_2}\leq Ct^{\prime(\alpha-1)}\tau.
\end{equation*}
Then for $n>1$, we have the estimate
 \begin{equation}\label{equO1ERpart1}
 	\begin{split}
 		&\left \|\frac{1}{2\pi J}\int_{\Gamma_{\theta,\kappa}}e^{zt'}\tilde{E}_h(z)z^{-1}dz-\frac{1}{2\pi J}\int_{\Gamma_{\theta,\kappa}}e^{zt_{n-1}}\tilde{E}_h(z)z^{-1}dz\right\|_{l_2\rightarrow\l_2}\\
 		\leq&C\tau\int_{\Gamma_{\theta,\kappa}}e^{-c|z|t'}|z|^{-\alpha}d|z|\\
 		\leq&C\tau  t'^{(\alpha-1)}.
 	\end{split}
\end{equation}
Following the above, we need to prove that
 \begin{equation}\label{equO1ERpart2}
 \left\|	Q(t',1)-\frac{1}{2\pi J}\int_{\Gamma_{\theta,\kappa}}e^{zt_{n-1}}\tilde{E}_h(z)z^{-1}dz\right\|_{l_2 \rightarrow l_2} \leq C\tau t'^{(\alpha-1)}.
 \end{equation}
 According to \eqref{equdefofejo1}, we have for small $\xi_\tau=e^{-\tau(\kappa+1)}$,
 \begin{equation*}
 	\tau E_n=\frac{1}{2\pi J}\int_{|\zeta|=\xi_\tau}\zeta^{-n-1}\tilde{E}_h\left(\frac{1-\zeta}{\tau}\right)d\zeta.
 \end{equation*}
 Since  $\sum\limits_{j=0}^{n-1}\zeta^{-j-1}=(\zeta^{-n}-1)/(1-\zeta)$, $ Q(t',1) $ can be written as
 \begin{equation*}
 	Q(t',1)=\tau\sum_{j=0}^{n-1}E_j=\frac{1}{2\pi \tau J}\int_{|\zeta|=\xi_\tau}\zeta^{-n}\tilde{E}_h\left(\frac{1-\zeta}{\tau}\right)\left(\frac{1-\zeta}{\tau}\right)^{-1}d\zeta.
 \end{equation*}
 where the fact $ \tilde{E}_h\left((1-\zeta)/\tau\right)/(1-\zeta) $ is analytic for small $\zeta$ is used. Taking $\zeta=e^{-z\tau}$, we get
 \begin{equation*}
 Q(t',1)=\frac{1}{2\pi J}\int_{\Gamma^\tau}e^{zt_{n-1}}\tilde{E}_h\left(\frac{1-e^{-z\tau}}{\tau}\right)\left(\frac{1-e^{-z\tau}}{\tau}\right)^{-1}dz,
 \end{equation*}
 where $\Gamma^\tau=\{z=\kappa+1+iy:y\in\mathbb{R}~{\rm and}~|y|\leq \pi/\tau\}$. Next we deform the contour $\Gamma^\tau$ to
 $\Gamma^\tau_{\theta,\kappa}=\{z\in \mathbb{C}:\kappa\leq |z|\leq\frac{\pi}{\tau\sin(\theta)},|\arg z|=\theta\}\bigcup\{z\in \mathbb{C}:|z|=\kappa,|\arg z|\leq\theta\}$, then $Q(t',1)$ can be rewritten as
  \begin{equation*}
 Q(t',1)=\frac{1}{2\pi J}\int_{\Gamma^\tau_{\theta,\kappa}}e^{zt_{n-1}}\tilde{E}_h\left(\frac{1-e^{-z\tau}}{\tau}\right)\left(\frac{1-e^{-z\tau}}{\tau}\right)^{-1}dz.
 \end{equation*}
 Thus we have
 \begin{equation*}
 	\begin{split}
 			&\frac{1}{2\pi J}\int_{\Gamma_{\theta,\kappa}}e^{zt_{n-1}}\tilde{E}_h(z)z^{-1}dz-Q(t',1)\\
 			=&\frac{1}{2\pi J}\int_{\Gamma_{\theta,\kappa}\backslash\Gamma^\tau_{\theta,\kappa}}e^{zt_{n-1}}\tilde{E}_h(z)z^{-1}dz\\
 			&+\frac{1}{2\pi J}\int_{\Gamma^\tau_{\theta,\kappa}}e^{zt_{n-1}}\left (\tilde{E}_h(z)z^{-1}-\tilde{E}_h\left(\frac{1-e^{-z\tau}}{\tau}\right)\left(\frac{1-e^{-z\tau}}{\tau}\right)^{-1}\right )dz\\
 			=&\uppercase\expandafter{\romannumeral1}+\uppercase\expandafter{\romannumeral2}.
 	\end{split}
 \end{equation*}
 For $\uppercase\expandafter{\romannumeral1}$, according to Lemma \ref{lemboundcon} there exists the estimate
\begin{equation}
\|\uppercase\expandafter{\romannumeral1}\|_{l_2\rightarrow l_2}\leq C\int_{\Gamma_{\theta,\kappa}\backslash\Gamma^\tau_{\theta,\kappa}}e^{-c|z|t'}|z|^{-\alpha-1}d|z|\leq C\tau t'^{(\alpha-1)}.
\end{equation}
For $\uppercase\expandafter{\romannumeral2}$, we obtain, with the mean value theorem, Lemma \ref{lemDeri} and $\frac{1-e^{-z\tau}}{\tau}=z+O(\tau z^2)$,
\begin{equation}
  \left\|\tilde{E}_h(z)z^{-1}-\tilde{E}_h\left(\frac{1-e^{-z\tau}}{\tau}\right)\left(\frac{1-e^{-z\tau}}{\tau}\right)^{-1}\right\|_{l_2\rightarrow l_2}\leq C|z|^{-\alpha-2}|\tau z^2|\leq C\tau|z|^{-\alpha}.
\end{equation}
Consequently,
\begin{equation*}
\begin{split}
  \|\uppercase\expandafter{\romannumeral2}\|_{l_2\rightarrow l_2}\leq& C\tau\left|\int_\kappa^{\frac{\pi}{\tau\sin(\theta)}}e^{-crt'}|r|^{-\alpha}dr\right|+C\tau\left|\int_{-\theta}^{\theta}e^{\kappa\cos(\eta) t_{n-1}}\kappa^{1-\alpha}d\eta\right|\\
  \leq& Ct'^{(\alpha-1)}\tau+C\tau\kappa e^{\kappa t'}\leq Ct'^{(\alpha-1)}\tau,
\end{split}
\end{equation*}
where the fact $\kappa e^{\kappa t'}\leq \kappa T^{1-\alpha}e^{\kappa T}t'^{(\alpha-1)}$ is used. Combining \eqref{equO1ERpart1} and \eqref{equO1ERpart2} leads to the desired result.
\end{proof}

\begin{theorem}
  Let $G$ be the solution of Eq. \eqref{equFKtodis} satisfying $G(\cdot,x)\in C^2(\bar{\Omega})$ and $\mathbf{G^n_h}$ be the solution of Eq. \eqref{equO1fulldis}. If $f\in L^{\infty}(0,T,L^2(\Omega))$ satisfying
   \begin{equation*}
    \int_{0}^t(t-s)^{\alpha-1}\|f(s)\|_{L^2}ds\leq\infty
  \end{equation*}
  and
  \begin{equation*}
    \int_{0}^t(t-s)^{\alpha-1}\|\mathcal{V}f'(s)\|_{l^2}ds\leq\infty
  \end{equation*}
  for $t\in(0,T]$, then
  \begin{equation*}
    \|\mathcal{V}G(t_n)-\mathbf{G^n_h}\|_{l^2}\leq C\tau\int_0^{t_n}(t_n-s)^{\alpha-1}\|\mathcal{V}f'(s)\|_{l^2}ds+Ch^{2-\beta}\int_0^{t_n}(t_n-s)^{\alpha-1}\|f(s)\|_{L^2}ds.
  \end{equation*}
\end{theorem}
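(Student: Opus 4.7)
The statement combines the spatial semi-discrete error from Theorem \ref{thmspatialEr} and the temporal discretization error from Theorem \ref{thmBEsemER}, so my plan is essentially to apply the triangle inequality to reduce the fully discrete error to these two already-established estimates. Concretely, I would insert the semi-discrete solution $\mathbf{G_h}(t_n)$ between the restriction $\mathcal{V}G(t_n)$ of the exact solution and the fully discrete numerical solution $\mathbf{G^n_h}$.

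The first step is to write
\begin{equation*}
\mathcal{V}G(t_n)-\mathbf{G^n_h}=\bigl(\mathcal{V}G(t_n)-\mathbf{G_h}(t_n)\bigr)+\bigl(\mathbf{G_h}(t_n)-\mathbf{G^n_h}\bigr),
\end{equation*}
and use the triangle inequality in $l^2$. For the first term, the regularity hypothesis $G(\cdot,x)\in C^2(\bar{\Omega})$ together with the integrability assumption $\int_0^t(t-s)^{\alpha-1}\|f(s)\|_{L^2}ds<\infty$ are exactly what Theorem \ref{thmspatialEr} requires, so that theorem yields
\begin{equation*}
\|\mathcal{V}G(t_n)-\mathbf{G_h}(t_n)\|_{l^2}\leq Ch^{2-\beta}\int_0^{t_n}(t_n-s)^{\alpha-1}\|f(s)\|_{L^2(\Omega)}\,ds.
\end{equation*}

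For the second term, the hypothesis $\int_0^t(t-s)^{\alpha-1}\|\mathcal{V}f'(s)\|_{l^2}ds<\infty$ and $f(0)=0$ (recall \eqref{equFkf00}) are exactly the hypotheses of Theorem \ref{thmBEsemER}, which delivers
\begin{equation*}
\|\mathbf{G_h}(t_n)-\mathbf{G^n_h}\|_{l^2}\leq C\tau\int_0^{t_n}(t_n-s)^{\alpha-1}\|\mathcal{V}f'(s)\|_{l^2}\,ds.
\end{equation*}
Adding these two bounds and absorbing constants gives exactly the claimed estimate.

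There is essentially no genuine obstacle here; the work has already been done in Theorems \ref{thmspatialEr} and \ref{thmBEsemER}. The only point one should verify is compatibility of the hypotheses: the regularity $G(\cdot,x)\in C^2(\bar{\Omega})$ stated in the theorem is precisely what was used to invoke the spatial bound via Lemma \ref{lemspaerrorLap}, and the two integrability assumptions on $f$ and $\mathcal{V}f'$ are stated separately and each matches exactly one of the auxiliary theorems, so no additional regularity lifting from $f$ to $G$ is needed. Hence the proof reduces to a single line of triangle inequality plus citations of the two earlier theorems.
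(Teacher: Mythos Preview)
Your proposal is correct and matches the paper's own proof essentially line for line: the paper also inserts $\mathbf{G_h}(t_n)$, applies the triangle inequality, and then cites Theorem~\ref{thmspatialEr} and Theorem~\ref{thmBEsemER} for the two pieces.
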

\begin{proof}
Combining Theorem \ref{thmspatialEr} and Theorem \ref{thmBEsemER} results in
\begin{equation*}
  \begin{aligned}
    \|\mathcal{V}G(t_n)-\mathbf{G^n_h}\|_{l^2}=&\|\mathcal{V}G(t_n)-\mathbf{G_h}(t_n)+\mathbf{G_h}(t_n)-\mathbf{G^n_h}\|_{l^2}\\
        \leq&\|\mathcal{V}G(t_n)-\mathbf{G_h}(t_n)\|_{l^2}+\|\mathbf{G_h}(t_n)-\mathbf{G^n_h}\|_{l^2}\\
        \leq& C\tau\int_0^{t_n}(t_n-s)^{\alpha-1}\|\mathcal{V}f'(s)\|_{l^2}ds+Ch^{2-\beta}\int_0^{t_n}(t_n-s)^{\alpha-1}\|f(s)\|_{L^2}ds.
  \end{aligned}
\end{equation*}
\end{proof}
\subsection{SBD scheme and error analysis }
Taking $\delta(\zeta)=(1-\zeta)+(1-\zeta)^2/2$, there exist
\begin{equation}\label{equBEdisO2}
    \mathcal{L}^{\alpha,\lambda}_tv(\mathbf{x_0},t_n)\approx\frac{1}{\tau^\alpha}\sum_{j=0}^n d^{\alpha,\lambda}_{n-j}(\mathbf{x_0})v_j(\mathbf{x_0})
\end{equation}
and
\begin{equation}\label{equO2genw1}
    \sum_{j=0}^\infty d^{\alpha,\lambda}_{j}(\mathbf{x_0})\zeta^j=((1-\zeta)+(1-\zeta)^2/2+\tau\lambda-\tau r(\mathbf{x_0})-\tau J\rho U(\mathbf{x_0}))^\alpha-(\tau\lambda)^\alpha,
\end{equation}
where $d^{\alpha,\lambda}_{j}(\mathbf{x_0})$ can also be calculated by FFT. As for $d^{\alpha,\lambda}_{0}(\mathbf{x_0})$, we have $\Re{d^{\alpha,\lambda}_{0}(\mathbf{x_0})}>0$. Then the time semi-discrete scheme can be got as
\begin{equation}\label{equO2halfltimesemidis}
\frac{1}{\tau^\alpha}\sum_{k=0}^n d^{\alpha,\lambda}_{k}G(\rho,t_{n-k})=
    (\Delta+\gamma)^{\frac{\beta}{2}} G(\rho,t_{n})+f(\rho,t_{n}).
\end{equation}
Combining \eqref{equO2halfltimesemidis} with \eqref{defdis}, we obtain the fully discrete scheme of Eq. \eqref{equFKtodis}, i.e., SBD scheme
\begin{equation}\label{equO2fulldis}
    \frac{1}{\tau^\alpha}\sum_{k=0}^n {\rm diag}(\mathcal{V}d^{\alpha,\lambda}_{k})\mathbf{G_h^{n-k}}=(\Delta+\gamma)^{\frac{\beta}{2}}_h\mathbf{G_h^n}+\mathcal{V}f^n.
\end{equation}
\begin{theorem}\label{thmSBDsemER}
	Let $\mathbf{G_h}$ and $\mathbf{G^n_h}$ be the solutions of Eq. \eqref{equFkspatiallysemidis} and Eq. \eqref{equO2fulldis}, respectively. If $f\in L^{\infty}(0,T,L^2(\Omega))$ with $\int_{0}^t(t-s)^{\alpha-1}\|\mathcal{V}f''(s)\|_{l^2}ds\leq\infty$ for $t\in(0,T]$, then we get
	\begin{equation*}
	\|\mathbf{G_h}(t_n)-\mathbf{G^n_h}\|_{l^2}\leq C\tau^2\left(t_n^{\alpha-1}\|\mathcal{V}f'(0)\|_{l^2}+\int_0^{t_n}(t_n-s)^{\alpha-1}\|\mathcal{V}f''(s)\|_{l^2}ds\right).
	\end{equation*}
\end{theorem}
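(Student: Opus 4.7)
The plan is to parallel the proof of Theorem \ref{thmBEsemER}, replacing the first-order generating polynomial by $\delta(\zeta)=(1-\zeta)+(1-\zeta)^2/2$ and carrying one extra term in the Taylor expansion of $\delta(e^{-z\tau})/\tau$. First, since $f(0)=0$, I decompose $f(t)=t\,f'(0)+\int_0^t(t-s)f''(s)\,ds$, whose Laplace transform is $\tilde f(z)=z^{-2}f'(0)+z^{-2}\widetilde{f''}(z)$. Substituting into \eqref{equLapofSdis} and inverting along $\Gamma_{\theta,\kappa}$ yields
$$\mathbf{G_h}(t_n)=K(t_n)\mathcal{V}f'(0)+(K\ast\mathcal{V}f'')(t_n),\qquad K(t)=\frac{1}{2\pi J}\int_{\Gamma_{\theta,\kappa}}e^{zt}\tilde{E}_h(z)z^{-2}\,dz.$$

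For the fully discrete solution I apply the generating-function machinery to \eqref{equO2fulldis}--\eqref{equO2genw1}, obtaining
$$\sum_{n\geq 0}\mathbf{G_h^n}\zeta^n=\tilde{E}_h\!\left(\frac{\delta(\zeta)}{\tau}\right)\sum_{n\geq 0}\mathcal{V}f^n\zeta^n.$$
Setting $\zeta=e^{-z\tau}$ in the Cauchy formula for the Taylor coefficients and deforming the circular contour onto $\Gamma^\tau_{\theta,\kappa}$ (exactly as in the BE proof) leads to the analog $\mathbf{G_h^n}=K_\tau(t_n)\mathcal{V}f'(0)+(K_\tau\ast_\tau\mathcal{V}f'')(t_n)$ with
$$K_\tau(t_n)=\frac{1}{2\pi J}\int_{\Gamma^\tau_{\theta,\kappa}}e^{zt_n}\tilde{E}_h\!\left(\tfrac{\delta(e^{-z\tau})}{\tau}\right)\!\left(\tfrac{\delta(e^{-z\tau})}{\tau}\right)^{-2}dz,$$
and $\ast_\tau$ the discrete convolution induced by the same quadrature.

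The core technical step is the estimate $\|K(t_n)-K_\tau(t_n)\|_{l^2\to l^2}\leq C\tau^2 t_n^{\alpha-1}$, together with the analogous kernel-level bound inside the convolution with $\mathcal{V}f''$. I split $K-K_\tau$ into a tail integral over $\Gamma_{\theta,\kappa}\setminus\Gamma^\tau_{\theta,\kappa}$ and a common-contour integral over $\Gamma^\tau_{\theta,\kappa}$. The tail is handled as in the BE proof, but now with the sharper weight $|z|^{-2}$: Lemma \ref{lemboundcon} gives $\|\tilde{E}_h(z)z^{-2}\|_{l^2\to l^2}\leq C|z|^{-\alpha-2}$, and combining with $|z|\geq\pi/(\tau\sin\theta)$ on the tail and $|e^{zt_n}|\leq e^{-c|z|t_n}$ produces an $O(\tau^2 t_n^{\alpha-1})$ bound. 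On the common contour, the two-term expansion $\delta(e^{-z\tau})/\tau=z-\tau^2 z^3/3+O(\tau^3 z^4)$ together with the mean value theorem, Lemma \ref{lemboundcon}, and Lemma \ref{lemDeri} yields
$$\left\|\tilde{E}_h(z)z^{-2}-\tilde{E}_h\!\left(\tfrac{\delta(e^{-z\tau})}{\tau}\right)\!\left(\tfrac{\delta(e^{-z\tau})}{\tau}\right)^{-2}\right\|_{l^2\to l^2}\leq C\tau^2|z|^{-\alpha},$$
which again integrates to $O(\tau^2 t_n^{\alpha-1})$. Applying the same contour bounds inside the convolution with $\mathcal{V}f''$ and invoking the standard convolution inequality delivers the second summand $C\tau^2\int_0^{t_n}(t_n-s)^{\alpha-1}\|\mathcal{V}f''(s)\|_{l^2}\,ds$ of the theorem.

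The main obstacle I anticipate is reconciling the discrete convolution $\ast_\tau$ with the continuous convolution $\ast$ when paired against $\mathcal{V}f''$: one must show that the quadrature error incurred in approximating $\int_0^{t_n}(t_n-s)\mathcal{V}f''(s)\,ds$ stays within the $O(\tau^2)$ budget, which is the feature that distinguishes the second-order case from the BE case where only the $f'$-convolution appears. Following the strategy of the BE proof, this is cleanest to handle entirely at the contour level, so the quadrature error is subsumed by the kernel comparison above rather than analyzed separately on $f''$.
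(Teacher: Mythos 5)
Your overall architecture is the same as the paper's: use $f(0)=0$ to write $f(t)=t f'(0)+t\ast f''(t)$, represent $\mathbf{G_h}(t_n)$ through the contour kernel $\frac{1}{2\pi J}\int_{\Gamma_{\theta,\kappa}}e^{zt}\tilde E_h(z)z^{-2}dz$, represent $\mathbf{G_h^n}$ through the generating function $\tilde E_h(\delta(\zeta)/\tau)$ with $\zeta=e^{-z\tau}$, deform to $\Gamma^\tau_{\theta,\kappa}$, split into a tail integral and a common-contour integral, and estimate the latter with $z_\tau=z+O(\tau^2z^3)$ together with Lemmas \ref{lemboundcon} and \ref{lemDeri}. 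However, two steps you assert are exactly where the paper's proof does its real work, and as written they leave genuine gaps. First, the identity $\mathbf{G_h^n}=K_\tau(t_n)\mathcal{V}f'(0)+(K_\tau\ast_\tau\mathcal{V}f'')(t_n)$ with the bare symbol $\bigl(\delta(e^{-z\tau})/\tau\bigr)^{-2}$ is not exact. Extracting the Cauchy coefficient of the sampled linear part uses $\sum_{j=0}^{n}j\zeta^j=(\zeta-\zeta^{n+1})/(1-\zeta)^2-n\zeta^{n+1}/(1-\zeta)$, and since $\tau^2\zeta/(1-\zeta)^2=\mu_1(\zeta)\,\tau^2/\delta(\zeta)^2$ with $\mu_1(\zeta)=\zeta(3-\zeta)^2/4$, the correct contour representation carries the multiplier $\mu_1(e^{-z\tau})$ (and, for the $z^{-1}$-kernel, $\mu(\zeta)=\tfrac12(3-\zeta)\zeta$). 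These factors are $1+O\bigl((\tau z)^2\bigr)$ and $1+O(\tau z)$ respectively; they do not destroy the rate, but they must appear and be estimated (they are exactly the extra pieces in the paper's bounds for its terms II and IV), whereas your "exact" formula simply omits them.

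Second, the obstacle you name and then defer is not actually subsumed by the $z^{-2}$-level kernel comparison. The convolution against $\mathcal{V}f''$ pairs the discrete kernel with $f''$ at every $t'\in[t_{n-1},t_n)$, not only at grid points, so one must compare $\frac{1}{2\pi J}\int_{\Gamma_{\theta,\kappa}}e^{zt'}\tilde E_h(z)z^{-2}dz$ with $Q(t',\hat t)$ uniformly on the subinterval. The paper does this by Taylor-expanding the continuous kernel about $t_n$ and decomposing $Q(t',\hat t)=\lim_{s\to t_n}Q(s,\hat t)+(t'-t_n)Q(t',1)$, which forces an additional, separate comparison of $Q(t',1)$ with the first-derivative kernel $\frac{1}{2\pi J}\int_{\Gamma_{\theta,\kappa}}e^{zt_n}\tilde E_h(z)z^{-1}dz$; that comparison is only $O\bigl(\tau (t')^{\alpha-1}\bigr)$ and is admissible solely because it is multiplied by the factor $(t'-t_n)=O(\tau)$. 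Your single comparison of the two $z^{-2}$ kernels does not control this intra-interval mismatch, so the argument as proposed is incomplete precisely at the point that distinguishes the SBD case from the BE case; filling it requires the expansion-plus-decomposition step above (or an equivalent), after which the rest of your estimates go through as in the paper.
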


\begin{proof}
	According to \eqref{equspasemsollap}, we have
\begin{equation}\label{equO2exactsol}
\mathbf{G_h}(t_n)=\frac{1}{2\pi J}\int_{\Gamma_{\theta,\kappa}}e^{zt_n}\tilde{E}_h(z)z^{-2}dz\mathcal{V}f'(0)+\left(\frac{1}{2\pi J}\int_{\Gamma_{\theta,\kappa}}e^{zt}\tilde{E}_h(z)z^{-2}dz\ast \mathcal{V}f''(t)\right)(t_n),
\end{equation}
where the fact $f(t)=f(0)+t f'(0)+t\ast f''(t)$, the property of convolution and \eqref{equFkf00} are used.
Multiplying both sides of \eqref{equO2fulldis} by $\zeta^n$ and summing from $0$ to $\infty$ lead to
\begin{equation*}
\begin{split}
\sum_{n=0}^{\infty}\zeta^n\mathbf{G_h^n}=& \left(\omega_h\left(\frac{(1-\zeta)+(1-\zeta)^2/2}{\tau}\right)-(\Delta+\gamma)^{\frac{\beta}{2}}_h\right)^{-1}\sum_{n=0}^{\infty}\zeta^n\mathcal{V}f^n\\
=&\tilde{E}_h\left(\frac{(1-\zeta)+(1-\zeta)^2/2}{\tau}\right)\sum_{n=0}^{\infty}\zeta^n\mathcal{V}f^n.
\end{split}
\end{equation*}
Thus
\begin{equation*}
\mathbf{G_h^{n}}=\tau\sum_{j=0}^{n}E_{n-j}\mathcal{V}f^j,
\end{equation*}
where
\begin{equation}\label{equdefofejo2}
\tilde{E}_h\left(\frac{(1-\zeta)+(1-\zeta)^2/2}{\tau}\right)=\tau\sum_{j=0}^{\infty}E_j\zeta^j.
\end{equation}
For convenience, we define
\begin{equation*}
Q(t',v)=\tau \cdot {\rm diag}\left(\sum_{0\leq t_j\leq t'}E_j\mathcal{V}v(t'-t_j)\right)={\rm diag}\left((\hat{E}_\tau\ast \mathcal{V}v)(t')\right),
\end{equation*}
where $\hat{E}_\tau=\tau\sum\limits_{j=0}^{\infty}E_j\delta_{t_j}$.  Denoting $\hat{t}(\cdot,t)=t$, then
\begin{equation}\label{equO2numsol}
\begin{split}
\mathbf{G_h^{n}}=&(\hat{E}_\tau\ast\mathcal{V}f)(t_n)\\
=&{\rm diag}((\hat{E}_\tau\ast \mathcal{V}\hat{t} )(t_n))\mathcal{V}f'(0)+{\rm diag}(\hat{E}_\tau\ast \mathcal{V}\hat{t})\ast(\mathcal{V}f'' )(t_n)\\
=&Q(t_n,\hat{t})\mathcal{V}f'(0)+(Q(t',\hat{t})\ast\mathcal{V}f''(t'))(t_n),
\end{split}
\end{equation}
where the fact $f(t)=f(0)+t f'(0)+t\ast f''(t)$, the property of convolution and \eqref{equFkf00} are used. According to \eqref{equO2exactsol} and \eqref{equO2numsol}, we have
\begin{equation*}
\begin{split}
&\mathbf{G_h}-\mathbf{G_h^{n}}\\
=&\left(\frac{1}{2\pi J}\int_{\Gamma_{\theta,\kappa}}e^{zt_n}\tilde{E}_h(z)z^{-2}dz-Q(t_n,\hat{t})\right )\mathcal{V}f'(0)\\
&+\left(\left(\frac{1}{2\pi J}\int_{\Gamma_{\theta,\kappa}}e^{zt}\tilde{E}_h(z)z^{-2}dz-Q(t,\hat{t})\right)\ast \mathcal{V}f''(t)\right)(t_n). \\
\end{split}	
\end{equation*}
Now  we need to consider the error between $ \frac{1}{2\pi J}\int_{\Gamma_{\theta,\kappa}}e^{zt'}\tilde{E}_h(z)z^{-2}dz $ and $ Q(t',\hat{t}) $ for $ t'\in [t_{n-1},t_n) $, $ n\geq 1 $. As for $n=1$, we have
\begin{equation*}
\left \|Q(t',t)-\frac{1}{2\pi J}\int_{\Gamma_{\theta,\kappa}}e^{zt'}\tilde{E}_h(z)z^{-2}dz\right \|_{l_2\rightarrow\l_2}\leq Ct'^{(\alpha-1)}\tau^2.
\end{equation*}
Then for $n>1$, by Taylor's expansion, we have
\begin{equation}\label{equO2disker}
\begin{split}
&\frac{1}{2\pi J}\int_{\Gamma_{\theta,\kappa}}e^{zt'}\tilde{E}_h(z)z^{-2}dz\\
=&\frac{1}{2\pi J}\int_{\Gamma_{\theta,\kappa}}e^{zt_n}\tilde{E}_h(z)z^{-2}dz+\frac{1}{2\pi J}(t'-t_n)\int_{\Gamma_{\theta,\kappa}}e^{zt_n}\tilde{E}_h(z)z^{-1}dz\\
&+\frac{1}{2\pi J}\int_{\Gamma_{\theta,\kappa}}\int_{t_n}^{t'}(t'-s)e^{zs}z^2ds\tilde{E}_h(z)z^{-2}dz.
%=&\int_{\Gamma_{\theta,\kappa}}e^{zt_n}\tilde{E}_h(z)z^{-2}dz+(t'-t_n)\int_{\Gamma_{\theta,\kappa}}e^{zt_n}\tilde{E}_h(z)z^{-1}dz+C\tau^2t'^{\alpha-1}.
\end{split}
\end{equation}
By simple calculation, we get
\begin{equation}
\left \|\frac{1}{2\pi J}\int_{\Gamma_{\theta,\kappa}}\int_{t_n}^{t'}(t'-s)e^{zs}z^2ds\tilde{E}_h(z)z^{-2}dz\right \|_{l_2\rightarrow l_2}\leq C\tau^2t'^{(\alpha-1)}.
\end{equation}
As for $ Q(t',\hat{t}) $, we have
\begin{equation}\label{equO2disQ}
	Q(t',\hat{t})=\lim_{t'\rightarrow t_n} Q(t',\hat{t})+(t'-t_n)Q(t',1).
\end{equation}
First we consider the error between $ \int_{\Gamma_{\theta,\kappa}}e^{zt_n}\tilde{E}_h(z)z^{-1}dz $ and $ Q(t',1) $. It can be noted that, for $\xi_\tau=e^{-\tau(\kappa+1)}$,
\begin{equation*}
\tau E_n=\frac{1}{2\pi J}\int_{|\zeta|=\xi_\tau}\zeta^{-n-1}\tilde{E}_h\left(\frac{(1-\zeta)+(1-\zeta)^2/2}{\tau}\right)d\zeta.
\end{equation*}
 Hence
 \begin{equation*}
	Q(t',1)=\frac{\tau^{-1}}{2\pi J}\int_{|\zeta|=\xi_\tau}\zeta^{-n-1}\mu(\zeta)\tilde{E}_h\left(\frac{(1-\zeta)+(1-\zeta)^2/2}{\tau}\right)\left(\frac{(1-\zeta)+(1-\zeta)^2/2}{\tau}\right)^{-1}d\zeta,
 \end{equation*}
 where we define $ \mu(\zeta)=\frac{1}{2}(3-\zeta)\zeta $, and use the fact
 \begin{equation*}
 	\begin{split}
		\sum_{j=0}^{n-1}\zeta^{-j-1}=\zeta^{-1}\frac{1-\zeta^{-n}}{1-\zeta^{-1}}=\zeta^{-n-1}\frac{\mu(\zeta)}{(1-\zeta)+(1-\zeta)^2/2}-\frac{1}{1-\zeta}
 	\end{split}
 \end{equation*}
 and $ \tilde{E}_h\left((1-\zeta)+(1-\zeta)^2/2)/\tau\right)/(1-\zeta) $ is analytic for small $\zeta$.  Taking $\zeta=e^{-z\tau}$, and denoting $z_\tau=\frac{(1-e^{-z\tau})+(1-e^{-z\tau})^2/2}{\tau}$, we have
 \begin{equation*}
 Q(t',1)=\frac{1}{2\pi J}\int_{\Gamma^\tau}e^{zt_{n}}\mu(e^{-z\tau})\tilde{E}_h\left(z_\tau\right)z_\tau^{-1}dz,
 \end{equation*}
 where $\Gamma^\tau=\{z=\kappa+1+iy:y\in\mathbb{R}~{\rm and}~|y|\leq \pi/\tau\}$. Next we deform the contour $\Gamma^\tau$ to
 $\Gamma^\tau_{\theta,\kappa}=\{z\in \mathbb{C}:\kappa\leq |z|\leq\frac{\pi}{\tau\sin(\theta)},|\arg z|=\theta\}\bigcup\{z\in \mathbb{C}:|z|=\kappa,|\arg z|\leq\theta\}$, then $Q(t',1)$ can be rewritten as
 \begin{equation*}
 Q(t',1)=\frac{1}{2\pi J}\int_{\Gamma^\tau_{\theta,\kappa}}e^{zt_{n}}\mu(e^{-z\tau})\tilde{E}_h\left(z_\tau\right)z_\tau^{-1}dz.
 \end{equation*}
Thus we have
\begin{equation*}
\begin{split}
&\frac{1}{2\pi J}\int_{\Gamma_{\theta,\kappa}}e^{zt_{n}}\tilde{E}_h(z)z^{-1}dz-Q(t',1)\\
=&\frac{1}{2\pi J}\int_{\Gamma_{\theta,\kappa}\backslash\Gamma^\tau_{\theta,\kappa}}e^{zt_{n}}\tilde{E}_h(z)z^{-1}dz\\
&+\frac{1}{2\pi J}\int_{\Gamma^\tau_{\theta,\kappa}}e^{zt_{n}}\left (\tilde{E}_h(z)z^{-1}-\mu(e^{-z\tau})\tilde{E}_h\left(z_\tau\right)z_\tau^{-1}\right )dz\\
=&\uppercase\expandafter{\romannumeral1}+\uppercase\expandafter{\romannumeral2}.
\end{split}
\end{equation*}
For $\uppercase\expandafter{\romannumeral1}$, there exists the estimate
\begin{equation}
\|\uppercase\expandafter{\romannumeral1}\|_{l_2\rightarrow l_2}\leq C\int_{\Gamma_{\theta,\kappa}\backslash\Gamma^\tau_{\theta,\kappa}}e^{-c|z|t'}|z|^{-\alpha-1}d|z|\leq C\tau t'^{(\alpha-1)}.
\end{equation}
For $\uppercase\expandafter{\romannumeral2}$, we obtain, with the mean value theorem, Lemma \ref{lemDeri}, $z_\tau=\frac{(1-e^{-z\tau})+(1-e^{-z\tau})^2/2}{\tau}=z+O(\tau^2 z^3)$ and $ c|z|\leq |z_\tau|\leq C|z| $ when $z\in \Gamma^\tau_{\theta,\kappa}$,
\begin{equation}
\begin{split}
&\left\|\tilde{E}_h(z)z^{-1}-\mu(e^{-z\tau})\tilde{E}_h\left(z_\tau\right)z_\tau^{-1}\right\|_{l_2\rightarrow l_2}\\
\leq&\left\|\tilde{E}_h(z)z^{-1}-\tilde{E}_h\left(z_\tau\right)z_\tau^{-1}\right\|_{l_2\rightarrow l_2}+\left\|\tilde{E}_h\left(z_\tau\right)z_\tau^{-1}-\mu(e^{-z\tau})\tilde{E}_h\left(z_\tau\right)z_\tau^{-1}\right\|_{l_2\rightarrow l_2}\\
\leq& C|z|^{-\alpha-2}|\tau^2 z^3|+C\tau|z|^{-\alpha}\\
\leq& C\tau|z|^{-\alpha}.
\end{split}
\end{equation}
Consequently,
\begin{equation*}
\begin{split}
\|\uppercase\expandafter{\romannumeral2}\|_{l_2\rightarrow l_2}\leq& C\tau\left|\int_\kappa^{\frac{\pi}{\tau\sin(\theta)}}e^{-crt'}|r|^{-\alpha}dr\right|+C\tau\left|\int_{-\theta}^{\theta}e^{\kappa\cos{(\eta)} t_{n}}\kappa^{1-\alpha}d\eta\right|\\
\leq& Ct'^{\alpha-1}\tau+C\tau\kappa e^{\kappa t'}\leq Ct'^{(\alpha-1)}\tau,
\end{split}
\end{equation*}
where the fact $\kappa e^{\kappa t'}\leq \kappa T^{1-\alpha}e^{\kappa T}t'^{\alpha-1}$ is used.

Next, we consider the error between $ \int_{\Gamma_{\theta,\kappa}}e^{zt_n}\tilde{E}_h(z)z^{-2}dz $ and $ \lim\limits_{t'\rightarrow t_n}Q(t',\hat{t}) $. Using the fact that $ \sum\limits_{j=0}^{n}j\zeta^j=(\zeta-\zeta^{n+1})/(1-\zeta)^2-n\zeta^{n+1}/(1-\zeta) $ and $ \tilde{E}_h\left((1-\zeta)+(1-\zeta)^2/2)/\tau\right)/(1-\zeta) $ and $ \tilde{E}_h\left((1-\zeta)+(1-\zeta)^2/2)/\tau\right)/(1-\zeta)^2 $ are analytic for small $\zeta$, we have
\begin{equation*}
\begin{split}
&\lim_{t'\rightarrow t_n}Q(t',\hat{t})=\tau^2\sum_{j=0}^{n-1}E_j(n-j)\\
=&\frac{\tau^{-1}}{2\pi J}\int_{|\zeta|=\xi_\tau}\zeta^{-n-1}\mu_1(\zeta)\tilde{E}_h\left(\frac{(1-\zeta)+(1-\zeta)^2/2}{\tau}\right)\left(\frac{(1-\zeta)+(1-\zeta)^2/2}{\tau}\right)^{-2}d\zeta,
\end{split}
\end{equation*}
where $ \mu_1(\zeta)=\zeta(3-\zeta)^2/4 $. Taking $\zeta=e^{-z\tau}$, we have
\begin{equation*}
\lim_{t'\rightarrow t_n}Q(t',\hat{t})=\frac{1}{2\pi J}\int_{\Gamma^\tau}e^{zt_{n}}\mu_1(e^{-z\tau})\tilde{E}_h\left(z_\tau\right)z_\tau^{-2}dz,
\end{equation*}
where $\Gamma^\tau=\{z=\kappa+1+iy:y\in\mathbb{R}~{\rm and}~|y|\leq \pi/\tau\}$. Next we deform the contour $\Gamma^\tau$ to
$\Gamma^\tau_{\theta,\kappa}=\{z\in \mathbb{C}:\kappa\leq |z|\leq\frac{\pi}{\tau\sin(\theta)},|\arg z|=\theta\}\bigcup\{z\in \mathbb{C}:|z|=\kappa,|\arg z|\leq\theta\}$, then $\lim_{t'\rightarrow t_n}Q(t',\hat{t})$ can be rewritten as
\begin{equation*}
\lim_{t'\rightarrow t_n}Q(t',\hat{t})=\frac{1}{2\pi J}\int_{\Gamma^\tau_{\theta,\kappa}}e^{zt_{n}}\mu_1(e^{-z\tau})\tilde{E}_h\left(z_\tau\right)z_\tau^{-2}dz.
\end{equation*}
Thus we have
\begin{equation*}
\begin{split}
&\frac{1}{2\pi J}\int_{\Gamma_{\theta,\kappa}}e^{zt_{n}}\tilde{E}_h(z)z^{-2}dz-\lim_{t'\rightarrow t_n}Q(t',\hat{t})\\
=&\frac{1}{2\pi J}\int_{\Gamma_{\theta,\kappa}\backslash\Gamma^\tau_{\theta,\kappa}}e^{zt_{n}}\tilde{E}_h(z)z^{-2}dz\\
&+\frac{1}{2\pi J}\int_{\Gamma^\tau_{\theta,\kappa}}e^{zt_{n}}\left (\tilde{E}_h(z)z^{-2}-\mu_1(e^{-z\tau})\tilde{E}_h\left(z_\tau\right)z_\tau^{-2}\right )dz\\
=&\uppercase\expandafter{\romannumeral3}+\uppercase\expandafter{\romannumeral4}.
\end{split}
\end{equation*}
For $\uppercase\expandafter{\romannumeral3}$, there exists the estimate
\begin{equation}
\|\uppercase\expandafter{\romannumeral3}\|_{l_2\rightarrow l_2}\leq C\int_{\Gamma_{\theta,\kappa}\backslash\Gamma^\tau_{\theta,\kappa}}e^{-c|z|t'}|z|^{-\alpha-2}d|z|\leq C\tau^2 t'^{(\alpha-1)}.
\end{equation}
For $\uppercase\expandafter{\romannumeral4}$, we obtain, with the mean value theorem, Lemma \ref{lemDeri}, $z_\tau=\frac{(1-e^{-z\tau})+(1-e^{-z\tau})^2/2}{\tau}=z+O(\tau^2 z^3)$ and $ c|z|\leq |z_\tau|\leq C|z| $ for $z\in \Gamma^\tau_{\theta,\kappa}$,
\begin{equation}
\begin{split}
&\left\|\tilde{E}_h(z)z^{-2}-\mu_1(e^{-z\tau})\tilde{E}_h\left(z_\tau\right)z_\tau^{-2}\right\|_{l_2\rightarrow l_2}\\
\leq&\left\|\tilde{E}_h(z)z^{-2}-\tilde{E}_h\left(z_\tau\right)z_\tau^{-2}\right\|_{l_2\rightarrow l_2}+\left\|\tilde{E}_h\left(z_\tau\right)z_\tau^{-2}-\mu_1(e^{-z\tau})\tilde{E}_h\left(z_\tau\right)z_\tau^{-2}\right\|_{l_2\rightarrow l_2}\\
\leq& C|z|^{-\alpha-3}|\tau^2 z^3|+C\tau^2|z|^{-\alpha}\\
\leq& C\tau^2|z|^{-\alpha}.
\end{split}
\end{equation}
Consequently,
\begin{equation*}
\begin{split}
\|\uppercase\expandafter{\romannumeral4}\|_{l_2\rightarrow l_2}\leq& C\tau^2\left|\int_\kappa^{\frac{\pi}{\tau\sin(\theta)}}e^{-crt'}|r|^{-\alpha}dr\right|+C\tau^2\left|\int_{-\theta}^{\theta}e^{\kappa\cos(\eta) t_{n}}\kappa^{1-\alpha}d\eta\right|\\
\leq& Ct'^{(\alpha-1)}\tau^2+C\tau^2\kappa e^{\kappa t'}\leq Ct'^{(\alpha-1)}\tau^2,
\end{split}
\end{equation*}
where the fact $\kappa e^{\kappa t'}\leq \kappa T^{1-\alpha}e^{\kappa T}t'^{\alpha-1}$ is used. So we complete the proof.
\end{proof}

\begin{theorem}
  Let $G$ be the solution of Eq. \eqref{equFKtodis} satisfying $G(\cdot,x)\in C^2(\bar{\Omega})$  and $\mathbf{G^n_h}$ be the solution of Eq. \eqref{equO2fulldis}. If $f\in L^{\infty}(0,T,L^2(\Omega))$ satisfying
   \begin{equation*}
    \int_{0}^t(t-s)^{\alpha-1}\|f(s)\|_{L^2}ds\leq\infty
  \end{equation*}
  and
  \begin{equation*}
    \int_{0}^t(t-s)^{\alpha-1}\|\mathcal{V}f''(s)\|_{l^2}ds\leq\infty
  \end{equation*}
  for $t\in(0,T]$, then
  \begin{equation*}
  \begin{split}
    \|\mathcal{V}G(t_n)-\mathbf{G^n_h}\|_{l^2}\leq& C\tau^2 \left(t_n^{\alpha-1}\|\mathcal{V}f'(0)\|_{l^2}+\int_0^{t_n}(t_n-s)^{\alpha-1}\|\mathcal{V}f''(s)\|_{l^2}ds\right)\\&+Ch^{2-\beta}\int_0^{t_n}(t_n-s)^{\alpha-1}\|f(s)\|_{L^2}ds.
  \end{split}
  \end{equation*}
\end{theorem}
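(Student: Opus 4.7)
The plan is to apply the triangle inequality by inserting the intermediate term $\mathbf{G_h}(t_n)$, which is the value at time $t_n$ of the solution to the spatially semi-discrete problem \eqref{equFkspatiallysemidis}. Writing
\begin{equation*}
\mathcal{V}G(t_n)-\mathbf{G^n_h}=\bigl(\mathcal{V}G(t_n)-\mathbf{G_h}(t_n)\bigr)+\bigl(\mathbf{G_h}(t_n)-\mathbf{G^n_h}\bigr),
\end{equation*}
the two pieces decouple into a purely spatial error and a purely temporal error.

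For the first piece, I would simply invoke Theorem \ref{thmspatialEr} under its hypothesis $\int_0^t(t-s)^{\alpha-1}\|f(s)\|_{L^2}\,ds<\infty$, which yields the bound $Ch^{2-\beta}\int_0^{t_n}(t_n-s)^{\alpha-1}\|f(s)\|_{L^2(\Omega)}\,ds$. For the second piece, I would invoke Theorem \ref{thmSBDsemER} under its hypothesis $\int_0^t(t-s)^{\alpha-1}\|\mathcal{V}f''(s)\|_{l^2}\,ds<\infty$, which gives the bound $C\tau^2\bigl(t_n^{\alpha-1}\|\mathcal{V}f'(0)\|_{l^2}+\int_0^{t_n}(t_n-s)^{\alpha-1}\|\mathcal{V}f''(s)\|_{l^2}\,ds\bigr)$. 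Adding the two estimates yields exactly the claimed bound.

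Since both component estimates have already been established in the preceding theorems and the two hypotheses on $f$ in the statement correspond precisely to what those two theorems require, there is no real obstacle: the result is a direct corollary. The only thing worth noting for clarity is that the regularity assumption $G(\cdot,x)\in C^2(\bar{\Omega})$ needed for Theorem \ref{thmspatialEr} is inherited as an assumption here, and that $f(0)=0$ from \eqref{equFkf00} is implicitly used inside Theorem \ref{thmSBDsemER} (so no extra boundary term appears). No further calculation is required.
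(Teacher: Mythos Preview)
Your proposal is correct and follows essentially the same approach as the paper: insert the semi-discrete solution $\mathbf{G_h}(t_n)$, apply the triangle inequality, and invoke Theorem~\ref{thmspatialEr} and Theorem~\ref{thmSBDsemER} on the two pieces.
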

\begin{proof}
Combining Theorem \ref{thmspatialEr} and Theorem \ref{thmSBDsemER} leads to
\begin{equation*}
  \begin{aligned}
    \|\mathcal{V}G(t_n)-\mathbf{G^n_h}\|_{l^2}=&\|\mathcal{V}G(t_n)-\mathbf{G_h}(t_n)+\mathbf{G_h}(t_n)-\mathbf{G^n_h}\|_{l^2}\\
    \leq&\|\mathcal{V}G(t_n)-\mathbf{G_h}(t_n)\|_{l^2}+\|\mathbf{G_h}(t_n)-\mathbf{G^n_h}\|_{l^2}\\
    \leq& C\tau^2 \left(t_n^{\alpha-1}\|\mathcal{V}f'(0)\|_{l^2}+\int_0^{t_n}(t_n-s)^{\alpha-1}\|\mathcal{V}f''(s)\|_{l^2}ds\right)\\
    &+Ch^{2-\beta}\int_0^{t_n}(t_n-s)^{\alpha-1}\|f(s)\|_{L^2}ds.
  \end{aligned}
\end{equation*}
\end{proof}

\section{Efficient computations}

When discretizing the non-local operator, it generally gives rise to a full matrix, so an effective algorithm is needed to numerically solve \eqref{equFKtodis} satisfying homogeneous Dirichlet boundary conditions, especially for high dimensional cases. In this section, we state how to reduce the complexity of our algorithm.

We first give a lemma about the property of $w^{\beta,\gamma}_{p,q,i,j}$ in \eqref{equweightoffl}.
\begin{lemma}\label{lemweporps1}
  Assume $-N<p_1,i_1,p_2,i_2<N$, $-N<q_1,j_1,q_2,j_2<N$. If $(|p_1-i_1|,|q_1-j_1|)=(|p_2-i_2|,|q_2-j_2|)$ and $(|p_1-i_1|,|q_1-j_1|)\neq(0,0)$, then there is
  \begin{equation*}
w^{\beta,\gamma}_{p_1,q_1,i_1,j_1}=w^{\beta,\gamma}_{p_2,q_2,i_2,j_2}.
  \end{equation*}
\end{lemma}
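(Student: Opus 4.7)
The plan is to recognize first that under the hypothesis $(|p_k-i_k|,|q_k-j_k|)\neq(0,0)$ and all four indices being strictly interior, both weights $w^{\beta,\gamma}_{p_1,q_1,i_1,j_1}$ and $w^{\beta,\gamma}_{p_2,q_2,i_2,j_2}$ fall into the ``otherwise'' branch of the case definition \eqref{equweightoffl}, namely
\begin{equation*}
w^{\beta,\gamma}_{p,q,i,j}=-c_{2,\beta}\frac{W^1_{i-p,j-q}+W^2_{i-p,j-q}+W^3_{i-p,j-q}+W^4_{i-p,j-q}}{e^{\gamma h\sqrt{(i-p)^2+(j-q)^2}}}.
\end{equation*}
Already this expression depends on $(p,q,i,j)$ only through the displacement $(i-p,j-q)$, and the denominator depends only on $(|i-p|,|j-q|)$. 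So the entire claim reduces to showing that the numerator $S(a,b):=W^1_{a,b}+W^2_{a,b}+W^3_{a,b}+W^4_{a,b}$ satisfies $S(a,b)=S(-a,b)=S(a,-b)$.

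To prove these symmetries I would exploit the definitions \eqref{equdefG} of $H_{a,b}, H^\xi_{a,b}, H^\eta_{a,b}, H^{\xi\eta}_{a,b}$ together with the change of variables $\xi\mapsto -\xi$ on the integration cell $[\xi_a,\xi_{a+1}]\times[\eta_b,\eta_{b+1}]$. Since $(\xi^2+\eta^2)^{(-2-\beta)/2}$ is even in $\xi$ and the image of the cell under $\xi\mapsto -\xi$ is $[\xi_{-a-1},\xi_{-a}]\times[\eta_b,\eta_{b+1}]$, one obtains
\begin{equation*}
H_{a,b}=H_{-a-1,b},\quad H^\xi_{a,b}=-H^\xi_{-a-1,b},\quad H^\eta_{a,b}=H^\eta_{-a-1,b},\quad H^{\xi\eta}_{a,b}=-H^{\xi\eta}_{-a-1,b},
\end{equation*}
together with the obvious identity $\xi_k=-\xi_{-k}$. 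Plugging these into the formulas \eqref{equdefW} for $W^1_{-a,b}$ and $W^2_{-a,b}$, every sign change is absorbed either by a reflected $H$-integral or by a reflected $\xi_{a\pm 1}$, and one verifies by direct substitution that $W^1_{-a,b}=W^2_{a,b}$ and $W^2_{-a,b}=W^1_{a,b}$. The analogous computation with the $b$-indices shifted instead shows $W^3_{-a,b}=W^4_{a,b}$ and $W^4_{-a,b}=W^3_{a,b}$. Summing, $S(-a,b)=S(a,b)$, and the reflection $b\mapsto -b$ is handled in exactly the same way, now swapping $W^1\leftrightarrow W^3$ and $W^2\leftrightarrow W^4$ via the corresponding symmetries of the $H$-integrals in the $\eta$-variable.

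Combining the two reflection invariances with the translation invariance built into the formula yields that $w^{\beta,\gamma}_{p,q,i,j}$ is a function of $(|i-p|,|j-q|)$ alone, which is precisely the statement of the lemma. The only real bookkeeping obstacle is the careful matching of the index shifts ``$-a-1$'' versus ``$-a$'' that arise from reflecting the integration cell: the shift by one is exactly compensated by the shift built into the definitions of $W^2, W^3, W^4$ (which use $\xi_{a-1},\eta_{b-1}$ rather than $\xi_{a+1},\eta_{b+1}$), so the four $W^k$ pair up cleanly under reflection rather than individually being invariant.
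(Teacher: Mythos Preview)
Your proposal is correct and follows essentially the same route as the paper: reduce to the ``otherwise'' branch of \eqref{equweightoffl}, then establish the reflection identities $W^1_{p,q}=W^2_{-p,q}=W^3_{p,-q}=W^4_{-p,-q}$ via the parity relations $H_{p,q}=H_{-p-1,q}$, $H^\xi_{p,q}=-H^\xi_{-p-1,q}$, $H^\eta_{p,q}=H^\eta_{-p-1,q}$, $H^{\xi\eta}_{p,q}=-H^{\xi\eta}_{-p-1,q}$ coming from $\xi\mapsto-\xi$ on the integration cell. The only point you omit that the paper addresses explicitly is that for displacements with $|i-p|\le 1$ and $|j-q|\le 1$, some of the $W^k$ entering the sum are given by the special values \eqref{equdefWsp} (from the singular-cell quadrature) rather than by \eqref{equdefW}, so the $H$-integral argument does not apply to them; there one simply reads off the required symmetry directly from \eqref{equdefWsp}.
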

\begin{proof}
  We first prove
  \begin{equation}\label{equrelaofW}
    W^1_{p,q}=W^2_{-p,q}=W^3_{p,-q}=W^4_{-p,-q}.
  \end{equation}
  According to \eqref{equdefWsp}, for $|p|\leq1$ and $|q|\leq1$, \eqref{equrelaofW} holds.
By \eqref{equdefW}, there exists
  \begin{equation*}
    \begin{aligned}
      &W^1_{p,q}-W^2_{-p,q}\\
      =&\left(H^{\xi\eta}_{p,q}-\xi_{p+1}H^\eta_{p,q}-\eta_{q+1}H^\xi_{p,q}+\xi_{p+1}\eta_{q+1}H_{p,q}\right)\\
      &+\left(H^{\xi\eta}_{-p-1,q}-\xi_{-p-1}H^\eta_{-p-1,q}-\eta_{q+1}H^\xi_{-p-1,q}+\xi_{-p-1}\eta_{q+1}H_{-p-1,q}\right).
    \end{aligned}
  \end{equation*}
From \eqref{equdefG}, we have
  \begin{equation*}
    \begin{aligned}
      &H^{\xi\eta}_{p,q}=-H^{\xi\eta}_{-p-1,q},~~H^\eta_{p,q}=H^\eta_{-p-1,q},\\
      &H^\xi_{p,q}=-H^\xi_{-p-1,q},~~H_{p,q}=H_{-p-1,q}.
    \end{aligned}
  \end{equation*}
Then there exists $W^1_{p,q}=W^2_{-p,q}$. Similarly, we have $W^1_{p,q}=W^3_{p,-q}=W^4_{-p,-q}$. Combining \eqref{equweightoffl} with \eqref{equrelaofW}, the lemma can be proved.
\end{proof}

When numerically solving Eq. \eqref{equFKtodis}, the full-discretization scheme \eqref{equO1fulldis} or \eqref{equO2fulldis} can be written as the matrix form
\begin{equation}\label{equlinsys}
\mathbf{A}\mathbf{G^n_h}=\mathbf{F},
\end{equation}
where
\begin{equation*}
  \mathbf{A}=\frac{1}{h^\beta}\mathbf{A_s}+\frac{1}{\tau^\alpha}\mathbf{A_t}.
\end{equation*}
Here, the elements of $\mathbf{A_s}$ correspond to the discretization of the tempered fractional Laplacian; the elements of $\mathbf{A_t}$ are with the discretization of the tempered fractional substantial derivative when $k=0$ for  Eq. \eqref{equO1fulldis} or Eq. \eqref{equO2fulldis}; the element of $\mathbf{F}$ is composed of discretizing the source term $f$ defined by \eqref{equFKtodis} and  the tempered fractional substantial derivative when $k\neq0$ for  Eq. \eqref{equO1fulldis} or Eq. \eqref{equO2fulldis}; and %$\mathbf{A_t}$ has the form
 \begin{equation*}
 \mathbf{A_t}={\rm diag}(\mathcal{V}d_0^{\alpha,\lambda}).
 \end{equation*}
 Next, we divide the matrix $\mathbf{A_s}$ into
\begin{equation*}
  \mathbf{A_s}=\mathbf{A_0}+\mathbf{A_d},
\end{equation*}
%Here, $\mathbf{A_0}$ and $\mathbf{A_d}$ have the forms
where
 \begin{equation*}
 \mathbf{A_0}=\left[\begin{matrix}
 0& w_{-N+1,-N+1,-N+1,-N+2} &\cdots & w_{-N+1,-N+1,N-1,N-1} \\
 w_{-N+1,-N+2,-N+1,-N+1}& 0 &\cdots & w_{-N+1,-N+2,N-1,N-1} \\
 \vdots& \vdots &\ddots & \vdots \\
 w_{N-1,N-1,-N+1,-N+1}&  w_{N-1,N-1,-N+1,-N+2} &\cdots &0 \\
 \end{matrix}\right],
 \end{equation*}

  \begin{equation*}
 \mathbf{A_d}=\left[\begin{matrix}
 w_{-N+1,-N+1,-N+1,-N+1}& 0 &\cdots & 0 \\
 0& w_{-N+1,-N+2,-N+1,-N+2} &\cdots & 0 \\
 \vdots&\vdots &\ddots & \vdots \\
 0& 0 &\cdots & w_{N-1,N-1,N-1,N-1} \\
 \end{matrix}\right].
 \end{equation*}

Based on Lemma \ref{lemweporps1} and the structure of matrix $\mathbf{A_0}$, it is easy to find that $\mathbf{A_0}$ is a symmetric block Toeplitz matrix with Toeplitz block (BTTB) matrix. Being similar to \cite{Chen2005}, the memory requirement for the  $(2N-1)\times (2N-1)$ matrix $\mathbf{A_0}$ can be reduced from $O(N^2)$ to $O(N)$ ($N$ is the dimension of matrix).

% matrix $\mathbf{A_0}$ by a $(2N-1)\times (2N-1)$ matrix to reduce the memory requirements from $O(N^2)$ to $O(N)$ ($N$ is the dimension of matrix).

 When solving Eq. \eqref{equlinsys}, we use the Krylov subspace iterative methods to reduce computational costs, such as the conjugate gradient (CG) method and the PCG method. In the iteration process, the $\mathbf{A}\mathbf{v}$ needs to be calculated ($\mathbf{v}$ is a vector). By the above decomposition, one can calculate
\begin{equation*}
  \mathbf{A}\mathbf{v}=(\mathbf{A_0}\mathbf{v}+\mathbf{A_d}\mathbf{v})/h^\beta+(\mathbf{A_t}\mathbf{v})/\tau^\alpha.
\end{equation*}
Since $\mathbf{A_0}$ is a BTTB matrix, one can calculate $\mathbf{A}_0\mathbf{v}$ by FFT and the computation costs can be reduced from $O(N^2)$ to $O(N\ln N)+O(N)$. To reduce the total number of iteration steps, one needs to consider how to construct a suitable preconditioner. Ref. \cite{Chen2005} builds a preconditioner for a BTTB matrix. However, matrix $\mathbf{A}$ isn't a BTTB matrix (due to the entries on the main diagonal), so the preconditioner constructed in \cite{Chen2005} can not be directly used. %we can not use this preconditioner directly. 
Instead, we denote
 \begin{equation*}
 \tilde{\mathbf{A}}=\mathbf{A_0}+\frac{\sum\limits_{(p,q)=(-N+1,-N+1)}^{(N-1,N-1)}\left(w_{p,q,p,q}+d^{\alpha,\lambda}_{0}(x_{p},y_q)\right)}{4(N-1)^2}\mathbf{I},
 \end{equation*}
 where $\mathbf{I}$ is an identity matrix. It is easy to find that $\tilde{\mathbf{A}}$ is a BTTB matrix, so one can take a preconditioner of $\tilde{\mathbf{A}}$ as the one of $\mathbf{A}$.
 % we can get an effective preconditioner of $\mathbf{A}$ by constructing a preconditioner for $\tilde{\mathbf{A}}$. 
 In numerical experiments, the effectiveness of the preconditioner is verified.

\section{Numerical experiments}

In this section, we verify the theoretical results on convergence rate and the effectiveness of the scheme by solving \eqref{equFKtodis} without the assumption on the regularity of the solution in time. Here, we consider the domain $\Omega=(-1,1)\times(-1,1)$ and time $T=1$; $l_2$ norm and $l_\infty$ norm are used to measure the numerical errors.
\subsection{Spatial convergence order}
\begin{example}\label{example:Hom}
Choose $U(\mathbf{x_0})=1$, $r(\mathbf{x_0})=-1$, and take the initial condition as
\begin{equation*}
G_0(\rho,\mathbf{x_0})=(1-x^2)(1-y^2) ~~~~\mathbf{x_0}\in\Omega;
\end{equation*}
 the source term is
\begin{equation*}
  \begin{aligned}
    f(t,\mathbf{x_0})=&\frac{\Gamma(1+\nu)}{\Gamma(1+\nu-\alpha)}e^{-(\lambda-r(\mathbf{x_0})-J\rho U(\mathbf{x_0}))t}t^{\nu-\alpha}(1-x^2)(1-y^2)\\
    &-(\Delta+\gamma)^{\frac{\beta}{2}}\left(e^{-(\lambda-r(\mathbf{x_0})-J\rho U(\mathbf{x_0}))t}(t^{\nu}+1)(1-x^2)(1-y^2)\right)\\
    &-\lambda^\alpha\left(e^{-(\lambda-r(\mathbf{x_0})-J\rho U(\mathbf{x_0}))t}(t^{\nu}+1)(1-x^2)(1-y^2)\right).
  \end{aligned}
\end{equation*}
Then Eq. (\ref{equFk}) has the exact solution
\begin{equation*}
G(\rho,t,\mathbf{x_0})=e^{-(\lambda-r(\mathbf{x_0})-J\rho U(\mathbf{x_0}))t}(t^{\nu}+1)(1-x^2)(1-y^2) ~~~~\mathbf{x_0}\in\Omega.
\end{equation*}
By \eqref{equinitial0equ}, there exists
\begin{equation*}
  G(\rho,t,\mathbf{x_0})=W(\rho,t,\mathbf{x_0})+e^{-(\lambda-r(\mathbf{x_0})-J\rho U(\mathbf{x_0}))t}(1-x^2)(1-y^2).
\end{equation*}
So $W(\rho,t,\mathbf{x_0})$ solves
\begin{equation*}
    \mathcal{L}^{\alpha,\lambda}_t W=(\Delta+\gamma)^{\frac{\beta}{2}}W-f_w(t,\mathbf{x_0}),
\end{equation*}
where
\begin{equation*}
  \begin{aligned}
    f_w(t,\mathbf{x_0})=&f-(\Delta+\gamma)^{\frac{\beta}{2}}\left(e^{-(\lambda-r(\mathbf{x_0})-J\rho U(\mathbf{x_0}))t}(1-x^2)(1-y^2)\right)\\
    &-\lambda^\alpha\left(e^{-(\lambda-r(\mathbf{x_0})-J\rho U(\mathbf{x_0}))t}(1-x^2)(1-y^2)\right).
  \end{aligned}
\end{equation*}

Here, in order to reduce the effect of time-discrete errors on spatial convergence rate, we use SBD method to discrete the $\mathcal{L}^{\alpha,\lambda}_t$, i.e., \eqref{equO2fulldis}. We choose $\nu=1.5$, $\alpha=0.3$ and $\lambda=0.1$ to make $f_w$ satisfy the conditions of Theorem \ref{thmSBDsemER}, which ensures the accuracy of the scheme. At the same time, we take  $\tau=1/640$ and $\sigma=1+\frac{\beta}{2}$. Table \ref{DHspatialconver} shows the spatial convergence rates of  solving Eq. \eqref{equFKtodis}; it can be noted that the results are consistent with the theoretical ones.
 % Table generated by Excel2LaTeX from sheet 'Sheet1'

\begin{table}[htbp]\fontsize{8pt}{10pt}\selectfont%生成浮动表格
 \begin{center}%\def\tabcolsep{28.5pt}%表格居中
  \caption {Numerical errors and convergence rates with $\alpha=0.3$, $\lambda=0.1$, and $\sigma=1+\frac{\beta}{2}$} \vspace{5pt}% 标题，离表格一定的距离
    % Table generated by Excel2LaTeX from sheet 'Sheet1'
\begin{tabular}{ccccccc}
\toprule[1pt]
        h   &            &        1/8 &       1/16 &       1/32 &       1/64 &      1/128 \\
\hline
           &$l^\infty$            &  3.464E-03 &  1.380E-03 &  5.262E-04 &  1.956E-04 &  7.161E-05 \\

  $\beta=0.5$ &Rate            &            &    1.3281  &    1.3908  &    1.4276  &    1.4497  \\
\cline{2-7}
 $\gamma=0.05$ &$l^2$            &  4.470E-03 &  1.809E-03 &  6.961E-04 &  2.601E-04 &  9.546E-05 \\

           &Rate            &            &    1.3048  &    1.3780  &    1.4205  &    1.4458  \\
\hline
           &$l^\infty$            &  8.354E-03 &  3.960E-03 &  1.805E-03 &  8.061E-04 &  3.560E-04 \\

  $\beta=0.8$ &Rate            &            &    1.0770  &    1.1333  &    1.1630  &    1.1791  \\
\cline{2-7}
  $\gamma=0.05$ &$l^2$            &  1.037E-02 &  4.999E-03 &  2.301E-03 &  1.033E-03 &  4.576E-04 \\

           &Rate            &            &    1.0528  &    1.1194  &    1.1552  &    1.1748  \\
\hline
           &$l^\infty$            &  2.274E-02 &  1.400E-02 &  8.343E-03 &  4.887E-03 &  2.837E-03 \\

  $\beta=1.2$ &Rate            &            &    0.6993  &    0.7471  &    0.7717  &    0.7847  \\
\cline{2-7}
  $\gamma=0.05$ &$l^2$            &  2.670E-02 &  1.666E-02 &  1.001E-02 &  5.894E-03 &  3.432E-03 \\

           &Rate            &            &    0.6801  &    0.7349  &    0.7643  &    0.7802  \\
\hline
           &$l^\infty$            &  4.310E-02 &  3.256E-02 &  2.399E-02 &  1.742E-02 &  1.254E-02 \\

  $\beta=1.5$&Rate            &            &    0.4045  &    0.4412  &    0.4618  &    0.4743  \\
\cline{2-7}
  $\gamma=0.05$ &$l^2$            &  4.864E-02 &  3.705E-02 &  2.744E-02 &  2.000E-02 &  1.444E-02 \\

           &Rate            &            &    0.3927  &    0.4331  &    0.4562  &    0.4705  \\

\bottomrule[1pt]
\end{tabular}\label{DHspatialconver}

  \end{center}
\end{table}

 Table \ref{DHspatialtime} shows the CPU time(s) and average iteration times of solving Eq. \eqref{equFKtodis} when using CG method and PCG method. When the mesh size $h$ is small, PCG method has a significant advantage of time and average iteration times compared to CG method, which shows that our preconditioner is effective.
 % Table generated by Excel2LaTeX from sheet 'Sheet1'

\begin{table}[htbp]\fontsize{8pt}{10pt}\selectfont%生成浮动表格
 \begin{center}%\def\tabcolsep{28.5pt}%表格居中
  \caption {Performance of the CG and PCG method} \vspace{5pt}% 标题，离表格一定的距离
    % Table generated by Excel2LaTeX from sheet 'Sheet1'
\begin{tabular}{ccccccc}
\toprule[1pt]

      h     &            &        1/8 &       1/16 &       1/32 &       1/64 &      1/128 \\
\hline
           &PCG time(s)     &     10.31  &     28.91  &     90.86  &    490.63  &   1641.22  \\

      $\beta=0.5$ &PCG iterations    &      7.00  &      8.00  &      8.00  &      9.00  &      9.00  \\
\cline{2-7}
     $\gamma=0.05$ &CG time(s)            &      9.86  &     29.47  &     99.77  &    538.13  &   2076.00  \\

           &CG iterations            &      9.00  &     11.00  &     12.00  &     14.00  &     17.00  \\
\hline
           &PCG time(s)            &     10.31  &     32.58  &    103.64  &    583.30  &   2047.86  \\

       $\beta=0.8$ &PCG iterations            &      9.00  &     10.00  &     11.00  &     12.00  &     14.00  \\
\cline{2-7}
      $\gamma=0.05$ &CG time(s)            &     11.64  &     36.98  &    133.17  &    878.98  &   3478.66  \\

           &CG iterations            &     12.00  &     17.00  &     22.00  &     29.00  &     38.00  \\
\hline
           &PCG time(s)            &     11.42  &     38.05  &    124.86  &    805.63  &   2676.86  \\

       $\beta=1.2$ &PCG iterations            &     12.00  &     14.00  &     17.00  &     20.00  &     22.00  \\
\cline{2-7}
      $\gamma=0.05$ &CG time(s)            &     13.22  &     44.03  &    208.13  &   1739.61  &   8164.75  \\

           &CG iterations            &     18.00  &     28.00  &     44.00  &     69.00  &    107.98  \\
\hline
           &PCG time(s)            &     12.48  &     44.69  &    144.45  &   1070.56  &   4163.16  \\

       $\beta=1.5$ &PCG iterations            &     14.00  &     19.00  &     22.00  &     30.00  &     41.00  \\
\cline{2-7}
      $\gamma=0.05$ &CG time(s)            &     14.92  &     56.19  &    295.34  &   3012.66  &  15843.39  \\

           &CG iterations            &     23.00  &     40.00  &     71.03  &    128.62  &    222.62  \\

\bottomrule[1pt]
\end{tabular}\label{DHspatialtime}

  \end{center}
\end{table}

\end{example}
\subsection{Time convergence order}
\begin{example}
Choose the exact solution given in Example \ref{example:Hom} to verify the time convergence orders by BE and SBD methods. Here, in order to reduce the effect of spatial-discrete errors on time convergence rates, we choose $h=1/256$.

 Firstly, we verify convergence orders of the BE scheme \eqref{equO1fulldis}. We take $\nu=0.8$ to satisfy the conditions needed in Theorem \ref{thmBEsemER}, and then let  $\beta=0.5$, $\gamma=0$ and $\sigma=1.25$. The results are shown in Table \ref{DHtimeO1}, which are consistent with our theoretical results. Afterwards, Table \ref{DHtimeO2} gives the numerical errors and convergence rates of the SBD scheme \eqref{equO2fulldis} when $\nu=1.8$, $\beta=0.2$, $\gamma=0$, and $\sigma=2$.
  % Table generated by Excel2LaTeX from sheet 'Sheet1'

\begin{table}[htbp]\fontsize{8pt}{10pt}\selectfont\linespread{5}%生成浮动表格
 \begin{center}%\def\tabcolsep{28.5pt}%表格居中
  \caption {Numerical errors and convergence rates with $\beta=0.5$, $\gamma=0$, and $\sigma=1.25$} \vspace{5pt}% 标题，离表格一定的距离
    % Table generated by Excel2LaTeX from sheet 'Sheet1'
\begin{tabular}{cccccc}
\toprule[1pt]
 $\tau$          & &        1/5 &       1/10 &       1/20 &       1/40 \\
\hline

 &$l^\infty$&  7.345E-03 &  3.681E-03 &  1.846E-03 &  9.287E-04 \\

$\alpha=0.3$ &Rate&            &    0.9967  &    0.9958  &    0.9910  \\
\cline{2-6}
$\lambda=0.5$&$l^2$&  8.104E-03 &  4.064E-03 &  2.040E-03 &  1.028E-03 \\

 &Rate &           &    0.9958  &    0.9945  &    0.9885  \\
\hline
&$l^\infty$&  1.284E-02 &  6.427E-03 &  3.204E-03 &  1.597E-03 \\

$\alpha=0.5$&Rate &            &    0.9986  &    1.0041  &    1.0048  \\
\cline{2-6}
$\lambda=0.5$&$l^2$&  1.414E-02 &  7.081E-03 &  3.532E-03 &  1.762E-03 \\

 &Rate&            &    0.9976  &    1.0034  &    1.0037  \\
\hline
 &$l^\infty$&  1.998E-02 &  1.015E-02 &  5.100E-03 &  2.548E-03 \\

 $\alpha=0.7$&Rate&            &    0.9770  &    0.9928  &    1.0008  \\
\cline{2-6}
 $\lambda=0.5$&$l^2$&  2.198E-02 &  1.118E-02 &  5.623E-03 &  2.812E-03 \\

 &Rate&            &    0.9749  &    0.9916  &    0.9999  \\
\bottomrule[1pt]
\end{tabular}\label{DHtimeO1}

  \end{center}
\end{table}

    % Table generated by Excel2LaTeX from sheet 'Sheet1'

\begin{table}[htbp]\fontsize{8pt}{10pt}\selectfont%生成浮动表格
 \begin{center}%\def\tabcolsep{28.5pt}%表格居中
  \caption {Numerical errors and convergence rates with $\beta=0.2$, $\gamma=0$, and $\sigma=2$} \vspace{5pt}% 标题，离表格一定的距离
    % Table generated by Excel2LaTeX from sheet 'Sheet1'
\begin{tabular}{cccccc}
\toprule[1pt]
    $\tau$       &            &        1/5 &       1/10 &       1/20 &       1/40 \\
\hline
           &$l^\infty$&  1.702E-03 &  4.329E-04 &  1.071E-04 &  2.697E-05 \\

  $\alpha=0.3$ &Rate            &            &    1.9747  &    2.0146  &    1.9898  \\
\cline{2-6}
  $\lambda=0.8$ &$l^2$&  1.845E-03 &  4.697E-04 &  1.164E-04 &  2.944E-05 \\

           &Rate            &            &    1.9738  &    2.0129  &    1.9829  \\
\hline
           &$l^\infty$&  2.872E-03 &  7.322E-04 &  1.804E-04 &  4.487E-05 \\

  $\alpha=0.5$ &Rate            &            &    1.9718  &    2.0208  &    2.0075  \\
\cline{2-6}
  $\lambda=0.8$ &$l^2$&  3.106E-03 &  7.920E-04 &  1.953E-04 &  4.867E-05 \\

           &Rate            &            &    1.9713  &    2.0200  &    2.0043  \\
\hline
           &$l^\infty$&  4.208E-03 &  1.073E-03 &  2.633E-04 &  6.508E-05 \\

  $\alpha=0.8$ &Rate            &            &    1.9720  &    2.0263  &    2.0165  \\
\cline{2-6}
  $\lambda=0.8$ &$l^2$&  4.542E-03 &  1.158E-03 &  2.843E-04 &  7.037E-05 \\

           &Rate            &            &    1.9717  &    2.0260  &    2.0147  \\

\bottomrule[1pt]
\end{tabular}\label{DHtimeO2}

  \end{center}
\end{table}

\end{example}

Following that, we verify the time and spatial convergence orders by the unknown exact solution.
\begin{example}
 Consider $U(\mathbf{x_0})=(x^2+y^2)$ and  $r(\mathbf{x_0})=-(x^2+y^2)$. Take the initial condition 
\begin{equation*}
G_0(\rho,\mathbf{x_0})=0 ~~~~\mathbf{x_0}\in\Omega;
\end{equation*}
 the source term is
\begin{equation*}
  \begin{aligned}
    f(t,\mathbf{x_0})=t^{\nu}.
  \end{aligned}
\end{equation*}
Since the exact solution is unknown, we use
\begin{equation*}
  e_h=\|G_{2h}-G_{h}\|
\end{equation*}
to measure the errors, where $G_h$ is the numerical solution under mesh size $h$.

Firstly, to verify the spatial convergence orders, we take $\nu=1.2$, $\tau=1/640$, $\alpha=0.5$, $\lambda=0.3$, and $\sigma=1+\beta/2$. The results are shown in Table \ref{unknown1spatial}. Since the regularity of the unknown solution does not meet the assumption of theoretical results, the convergence rates are lower.
%Because of the low regularity of exact solution, the convergence order is lower than our theoretical analysis.
% Table generated by Excel2LaTeX from sheet 'Sheet1'

\begin{table}[h]\fontsize{8pt}{10pt}\selectfont%生成浮动表格
 \begin{center}%\def\tabcolsep{28.5pt}%表格居中
  \caption {Numerical errors and convergence rates with $\alpha=0.5$, $\lambda=0.3$, and $\sigma=1+\beta/2$} \vspace{5pt}% 标题，离表格一定的距离
    % Table generated by Excel2LaTeX from sheet 'Sheet1'
\begin{tabular}{ccccccc}
\toprule[1pt]

    h       &            &          1/8 &         1/16 &         1/32 &         1/64 &        1/128 \\
\hline
           &$l^\infty$            &  3.844E-02 &  3.694E-02 &  3.383E-02 &  2.981E-02 &  2.709E-02 \\

  $\beta=0.5$ &Rate            &            &    0.0574  &    0.1268  &    0.1829  &    0.1379  \\
\cline{2-7}
  $\gamma=0.5$ &$l^2$            &  2.183E-02 &  1.689E-02 &  1.223E-02 &  8.459E-03 &  5.657E-03 \\

           &Rate            &            &    0.3700  &    0.4662  &    0.5314  &    0.5803  \\
\hline
           &$l^\infty$            &  4.165E-02 &  4.162E-02 &  3.727E-02 &  3.229E-02 &  2.756E-02 \\

  $\beta=0.8$ &Rate            &            &    0.0008  &    0.1595  &    0.2068  &    0.2286  \\
\cline{2-7}
  $\gamma=0.5$ &$l^2$            &  2.259E-02 &  1.860E-02 &  1.389E-02 &  9.674E-03 &  6.393E-03 \\

           &Rate            &            &    0.2803  &    0.4212  &    0.5219  &    0.5975  \\
\hline
           &$l^\infty$            &  4.003E-02 &  3.113E-02 &  2.515E-02 &  1.876E-02 &  1.327E-02 \\

  $\beta=1.2$&Rate            &            &    0.3629  &    0.3078  &    0.4230  &    0.4987  \\
\cline{2-7}
  $\gamma=0.5$ &$l^2$            &  2.490E-02 &  1.894E-02 &  1.332E-02 &  8.918E-03 &  5.761E-03 \\

           &Rate            &            &    0.3945  &    0.5080  &    0.5789  &    0.6305  \\
\hline
           &$l^\infty$            &  1.772E-02 &  1.504E-02 &  1.211E-02 &  9.288E-03 &  6.918E-03 \\

  $\beta=1.5$&Rate            &            &    0.2366  &    0.3131  &    0.3822  &    0.4249  \\
\cline{2-7}
  $\gamma=0.5$ &$l^2$            &  1.480E-02 &  1.331E-02 &  1.089E-02 &  8.416E-03 &  6.291E-03 \\

           &Rate            &            &    0.1527  &    0.2901  &    0.3713  &    0.4199  \\

\bottomrule[1pt]
\end{tabular}\label{unknown1spatial}

  \end{center}
\end{table}

Next, we verify the time convergence orders, i.e., the BE scheme \eqref{equO1fulldis} and SBD scheme \eqref{equO2fulldis}. Here, we take $\nu=0.2$ and $\nu=1.2$ to satisfy the conditions needed for Theorem \ref{thmBEsemER} and Theorem \ref{thmSBDsemER}, respectively, and then we let $\beta=0.5$, $\gamma=0.05$, and $\sigma=2$. The results are shown in Tables \ref{unknown2timeO1} and \ref{unknown1timeO2}, respectively, which are consistent with our theoretical results.
% Table generated by Excel2LaTeX from sheet 'Sheet1'

\begin{table}[h]\fontsize{8pt}{10pt}\selectfont%生成浮动表格
 \begin{center}%\def\tabcolsep{28.5pt}%表格居中
  \caption {Numerical errors and convergence rates with $\beta=0.5$, $\gamma=0.05$, and $\sigma=2$} \vspace{5pt}% 标题，离表格一定的距离
    % Table generated by Excel2LaTeX from sheet 'Sheet1'
\begin{tabular}{ccccccc}
\toprule[1pt]

  $\tau$         &            &         1/10 &         1/20 &         1/40 &         1/80 &        1/160 \\
\hline
           &$l^\infty$            & 4.7286E-03 & 2.1208E-03 & 9.7136E-04 & 4.5171E-04 & 2.1251E-04 \\

  $\alpha=0.1$ &Rate            &            &    1.1568  &    1.1265  &    1.1046  &    1.0878  \\
\cline{2-7}
  $\lambda=0.1$ &$l^2$            & 7.7837E-03 & 3.4997E-03 & 1.5907E-03 & 7.3012E-04 & 3.3830E-04 \\

           &Rate            &            &    1.1532  &    1.1376  &    1.1235  &    1.1098  \\
\hline
           &$l^\infty$            & 6.9103E-03 & 3.2737E-03 & 1.5560E-03 & 7.4300E-04 & 3.5654E-04 \\

  $\alpha=0.5$ &Rate            &            &    1.0778  &    1.0730  &    1.0664  &    1.0593  \\
\cline{2-7}
  $\lambda=0.1$ &$l^2$            & 1.1407E-02 & 5.2453E-03 & 2.4263E-03 & 1.1352E-03 & 5.3846E-04 \\

           &Rate            &            &    1.1209  &    1.1122  &    1.0958  &    1.0761  \\
\hline
           &$l^\infty$            & 7.8552E-03 & 3.9342E-03 & 1.9366E-03 & 9.4549E-04 & 4.6062E-04 \\

  $\alpha=0.9$ &Rate            &            &    0.9976  &    1.0226  &    1.0344  &    1.0375  \\
\cline{2-7}
  $\lambda=0.1$ &$l^2$            & 1.2449E-02 & 6.0706E-03 & 2.9746E-03 & 1.4761E-03 & 7.4225E-04 \\

           &Rate            &            &    1.0361  &    1.0291  &    1.0109  &    0.9918  \\

\bottomrule[1pt]
\end{tabular}\label{unknown2timeO1}

  \end{center}
\end{table}

% Table generated by Excel2LaTeX from sheet 'Sheet1'

\begin{table}[h]\fontsize{8pt}{10pt}\selectfont%生成浮动表格
 \begin{center}%\def\tabcolsep{28.5pt}%表格居中
  \caption {Numerical errors and convergence rates with $\beta=0.5$, $\gamma=0.05$, and $\sigma=2$} \vspace{5pt}% 标题，离表格一定的距离
    % Table generated by Excel2LaTeX from sheet 'Sheet1'
\begin{tabular}{ccccccc}
\toprule[1pt]

    $\tau$       &            &       1/10 &       1/20 &       1/40 &       1/80 &      1/160 \\
\hline
           &$l^\infty$            &  5.710E-04 &  1.533E-04 &  3.826E-05 &  9.483E-06 &  2.351E-06 \\

  $\alpha=0.1$ &Rate            &            &    1.8966  &    2.0029  &    2.0123  &    2.0124  \\
\cline{2-7}
  $\lambda=0.1$ &$l^2$            &  8.954E-04 &  2.336E-04 &  5.822E-05 &  1.446E-05 &  3.593E-06 \\

           &Rate            &            &    1.9386  &    2.0044  &    2.0098  &    2.0086  \\
\hline
           &$l^\infty$            &  1.901E-03 &  4.165E-04 &  9.991E-05 &  2.486E-05 &  6.276E-06 \\

  $\alpha=0.5$ &Rate            &            &    2.1905  &    2.0594  &    2.0066  &    1.9862  \\
\cline{2-7}
  $\lambda=0.1$ &$l^2$            &  2.105E-03 &  5.095E-04 &  1.254E-04 &  3.119E-05 &  7.804E-06 \\

           &Rate            &            &    2.0470  &    2.0227  &    2.0069  &    1.9991  \\
\hline
           &$l^\infty$            &  4.117E-03 &  9.535E-04 &  2.298E-04 &  5.692E-05 &  1.427E-05 \\

  $\alpha=0.9$ &Rate            &            &    2.1102  &    2.0525  &    2.0137  &    1.9961  \\
\cline{2-7}
  $\lambda=0.1$ &$l^2$            &  4.135E-03 &  9.653E-04 &  2.333E-04 &  5.768E-05 &  1.442E-05 \\

           &Rate            &            &    2.0989  &    2.0489  &    2.0158  &    2.0004  \\

\bottomrule[1pt]
\end{tabular}\label{unknown1timeO2}

  \end{center}
\end{table}

\end{example}

\section{Conclusion}
The model describing the functional distribution of the trajectory of the reaction and diffusion process was recently built \cite{Hou2018}, which is composed of tempered fractional substantial derivative in time and tempered fractional Laplacian in space. To develop the finite difference schemes for the two dimensional model, we use the convolution quadrature to approximate the tempered fractional substantial derivative and, respectively, get the first-order and second-order approximation, and the weighted trapezoidal rule and bilinear interpolation are used to deal with the tempered fractional Laplacian, which is based on our previous work and modifies the regularity requirement of the solution according to \cite{Zhang2017}.  The error analyses of the designed schemes are strictly performed.  Moreover, some techniques are introduced to effectively reduce the complexity of the algorithm. Finally, we verify the predicted convergence rates and the effectiveness of the proposed schemes by numerical experiments.

%This paper develops a finite difference discretization for the two-dimensional backward fractional Feynman-Kac equation with tempered fractional substantial derivative and tempered fractional Laplacian; we use the convolution quadrature to approximate the tempered fractional substantial derivative and get the first-order and second-order scheme; combining the weighted trapezoidal rule and bilinear interpolation to deal with the tempered fractional Laplacian, which is based on our previous work and modifies the regularity requirement of the solution according to \cite{Zhang2017}, and we get the accuracy is $O(h^{2-\beta})$ when the exact solution $G(\cdot,\mathbf{x})\in C^2(\bar{\Omega})$. Moreover, we give the efficient method to reduce the complexity of our algorithm. At last, we verify the predicted convergence order and the effectiveness of our discretization by numerical experiments.

\section*{Acknowledgements}
This work was supported by the National Natural Science Foundation of China under grant no. 11671182, and the Fundamental Research Funds for the Central Universities under grants no. lzujbky-2018-ot03 and no. lzujbky-2017-ot10.

\section*{Appendix}
\appendix
  \section{Proof of Theorem \ref{thmtrunerror}}
  To prove Theorem \ref{thmtrunerror}, we need the following Lemmas. Lemma \ref{lemmaintepola} gives the error estimate of the bilinear interpolation.
  \begin{lemma}[\cite{Brenner2008}]\label{lemmaintepola}
Let $I$ denote the bilinear interpolation on the box $K=[0,h]\times[0,h]$. For $f\in W^{2,\infty}(K)$ ($W^{k,p}(K)$ denotes the Sobolev space), the error of bilinear interpolation is bounded by
\begin{equation}
  \|f-I f\|_{l_\infty}\leq ch^2\left(\left\|\frac{\partial^2 f}{\partial x^2}\right\|_{l_\infty}+\left\|\frac{\partial^2 f}{\partial y^2}\right\|_{l_\infty}\right).
\end{equation}
\end{lemma}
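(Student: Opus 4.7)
The plan is to split the pointwise truncation error at any fixed grid node $(x_p,y_q)$ into three pieces that mirror the construction of the scheme: the singular-cell piece (the four cells sharing the corner $(x_p,y_q)$, handled by the weighted trapezoidal rule in \eqref{eqsingularint}), the regular-cell piece (every interior cell not touching $(x_p,y_q)$, handled by bilinear interpolation), and the exterior piece $W_{p,q}^\infty G(x_p,y_q)$ (which is represented exactly, since $G\equiv 0$ on $\mathbb{R}^2\setminus\Omega$). Because the exterior piece is exact, it contributes no error, and the first task is to bound the other two in the $l^\infty$ norm uniformly in $(p,q)$.

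For the singular piece, the key observation is that $G\in C^2(\bar\Omega)$ forces the symmetrized increment $\psi(x_p,y_q,\xi,\eta)$ to vanish to second order at the origin, so the function $\phi_\sigma(\xi,\eta)$ appearing in \eqref{equsingu3} is bounded and sufficiently smooth on $[0,h]^2$ when $\sigma<2$, and its behavior at the corner $(0,0)$ is compatible with the convention taken in \eqref{equdis11}. I would then apply the standard weighted trapezoidal error estimate against the integrable weight $(\xi^2+\eta^2)^{(\sigma-2-\beta)/2}$; a short computation using $\|\nabla^2\phi_\sigma\|_\infty\le C\|G\|_{C^2}$ and $\int_0^h\int_0^h(\xi^2+\eta^2)^{(\sigma-2-\beta)/2}\,d\xi d\eta=O(h^{\sigma-\beta})$ gives a local error of order $h^{2}\cdot h^{-\beta}=h^{2-\beta}$ after dividing by the factored $h^\sigma$. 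This reuses the argument of the previous paper on the weighted trapezoidal rule; the only modification is that the regularity of $G$ is now required only on $\bar\Omega$ and not on all of $\mathbb{R}^2$, which is precisely the improvement from \cite{Zhang2017}.

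For the regular-cell piece, apply Lemma \ref{lemmaintepola} to the smooth factor $\bigl(G(\xi,\eta)-G(x_p,y_q)\bigr)/e^{\gamma\sqrt{(\xi-x_p)^2+(\eta-y_q)^2}}$ on each cell $[\xi_{p+i},\xi_{p+i+1}]\times[\eta_{q+j},\eta_{q+j+1}]$ with $(i,j)\notin\mathcal I_{p,q}$; the interpolation error is bounded pointwise by $Ch^2\|G\|_{C^2(\bar\Omega)}$ uniformly (since $G$ vanishes smoothly at the boundary and derivatives of $e^{-\gamma|\cdot|}$ are uniformly bounded). Multiplying by the cell weight $|(\xi_{p+i},\eta_{q+j})-(x_p,y_q)|^{-2-\beta}h^2$ and summing, the total contribution is controlled by $Ch^2\sum_{(i,j)\ne(0,0)}h^2\bigl(h\sqrt{i^2+j^2}\bigr)^{-2-\beta}$, which in turn is bounded by $Ch^2\int_{h}^{\sqrt{2}\,l}r^{-1-\beta}\,dr\le Ch^{2-\beta}/\beta$, yielding the desired rate. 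Adding the singular and regular contributions gives the $l^\infty$ bound; the $l^2$ bound then follows immediately since the number of grid points is $(2N-1)^2=O(h^{-2})$ but the estimate is pointwise uniform, and the domain $\Omega$ has fixed measure, so $\|\cdot\|_{l^2}\le C\|\cdot\|_{l^\infty}$ with $C$ independent of $h$ (the $h^2$ weight in the discrete $l^2$ norm cancels the number of points).

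The step I expect to be the main technical obstacle is the uniform estimate on the singular piece: one has to verify that the symmetrized increment $\psi$ truly behaves like $O(\xi^2+\eta^2)$ near the origin (so $\phi_\sigma$ is bounded for all admissible $\sigma$), control the second derivatives of $\phi_\sigma$ on $[0,h]^2$ in terms of $\|G\|_{C^2(\bar\Omega)}$ without picking up uncontrolled boundary terms when $(x_p,y_q)$ is within $h$ of $\partial\Omega$, and then choose $\sigma\in(\beta,2]$ so that the weighted trapezoidal rule is applied in its valid regime. Once this careful bookkeeping is done, the rest of the proof is a direct adaptation of the analysis in \cite{Sun2018} modified using \cite{Zhang2017} to relax the regularity from $\mathbb{R}^2$ to $\bar\Omega$, and the details can be placed in Appendix A as indicated.
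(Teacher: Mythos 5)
Your proposal does not address the statement at hand. Lemma \ref{lemmaintepola} is a purely local approximation-theory fact about bilinear interpolation on a single cell $K=[0,h]\times[0,h]$: it asserts $\|f-If\|_{l_\infty}\leq ch^2\left(\|\partial^2 f/\partial x^2\|_{l_\infty}+\|\partial^2 f/\partial y^2\|_{l_\infty}\right)$ for $f\in W^{2,\infty}(K)$. What you have written is instead an outline of the proof of Theorem \ref{thmtrunerror} (the global $O(h^{2-\beta})$ truncation error of the discretized tempered fractional Laplacian), and in your ``regular-cell piece'' you explicitly \emph{invoke} Lemma \ref{lemmaintepola} as a known tool. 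As a proof of the lemma itself this is circular; as a proof of the truncation-error theorem it establishes a different statement from the one you were asked to prove. In the paper the lemma carries no proof at all --- it is quoted from \cite{Brenner2008} --- and your text essentially reproduces, in sketch form, the argument of Appendix A, where the lemma is an input rather than a conclusion.

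If you want to actually prove the lemma, the standard route is the tensor-product decomposition of the bilinear interpolant: writing $I=I_xI_y$ with $I_x$, $I_y$ the one-dimensional linear interpolation operators in each variable, one has $f-If=(f-I_xf)+I_x(f-I_yf)$. The one-dimensional linear interpolation error on an interval of length $h$ is bounded by $\tfrac{h^2}{8}\|\partial^2 f/\partial x^2\|_{l_\infty}$ (by a Rolle/Peano-kernel argument), and $I_x$ is a contraction in the sup norm since its values are convex combinations of function values; the two terms then produce exactly the two pure second derivatives appearing in the statement. Note that a generic Bramble--Hilbert argument would also bring in the mixed derivative $\partial^2 f/\partial x\partial y$, which the stated bound deliberately omits because bilinear interpolation reproduces the monomial $xy$ exactly --- so the tensor-product argument is the one that matches the claimed form. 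None of this appears in your proposal.
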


 Next, we provide the estimates about $\phi_{\sigma}$, $\frac{\partial^2}{\partial\xi^2}\frac{G(\xi,\eta)-G(x_p,y_q)}{e^{\gamma \sqrt{(\xi-x_p)^2+(\eta-x_q)^2}}}$, and $\frac{\partial^2}{\partial\eta^2}\frac{G(\xi,\eta)-G(x_p,y_q)}{e^{\gamma \sqrt{(\xi-x_p)^2+(\eta-x_q)^2}}}$.
\begin{lemma}\label{lemfunc2error}
Let $\beta\in(0,2)$ , $\xi> 0$, and $\eta>0$. If $G(x,y)\in C^{2}(\bar{\Omega})$, then for $(x,y)\in\bar{\Omega}$, there are
\begin{equation}\label{eqphikz}
\begin{split}
&\left|\phi_{\sigma}\right|\leq C\left(\xi^2+\eta^2\right)^{1-\frac{\sigma}{2}},\\
&\left|\frac{\partial^2}{\partial\xi^2}\frac{G(\xi,\eta)-G(x_p,y_q)}{e^{\gamma \sqrt{(\xi-x_p)^2+(\eta-x_q)^2}}}\right|\leq C,
%~{\rm for}~|\xi-x_p|\geq h,~|\eta-y_q|\geq h,
\\
&\left|\frac{\partial^2}{\partial\eta^2}\frac{G(\xi,\eta)-G(x_p,y_q)}{e^{\gamma \sqrt{(\xi-x_p)^2+(\eta-x_q)^2}}}\right|\leq C,
%~{\rm for}~|\xi-x_p|\geq h,~|\eta-y_q|\geq h
\end{split}
\end{equation}
with $C$ being a positive constant.
\end{lemma}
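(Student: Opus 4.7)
The plan is to treat the three assertions separately. For the bound on $\phi_\sigma$, I would apply a second-order Taylor expansion of $G$ about $(x_p,y_q)$ to each of the four evaluations $G(x_p\pm\xi,y_q\pm\eta)$ appearing in $\psi(x_p,y_q,\xi,\eta)$. The symmetric sign pattern of the four shifts forces the constant contributions to cancel $4G(x_p,y_q)$, the pure first-order terms $\pm\xi G_x$ and $\pm\eta G_y$ to cancel pairwise, and the mixed quadratic term $\pm\xi\eta G_{xy}$ to cancel as well. What remains is built from $\xi^2G_{xx}$, $\eta^2G_{yy}$ and $C^2$-controlled integral remainders, all of size $C(\xi^2+\eta^2)$ since $G\in C^2(\bar\Omega)$. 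Dividing by $e^{\gamma\sqrt{\xi^2+\eta^2}}(\sqrt{\xi^2+\eta^2})^\sigma\ge(\sqrt{\xi^2+\eta^2})^\sigma$ then produces the claimed bound $|\phi_\sigma|\le C(\xi^2+\eta^2)^{1-\sigma/2}$.

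For the two second-derivative estimates, I would write the quotient as $fg$ with $f(\xi,\eta)=G(\xi,\eta)-G(x_p,y_q)$ and $g(\xi,\eta)=e^{-\gamma r}$, $r=\sqrt{(\xi-x_p)^2+(\eta-y_q)^2}$, and apply the Leibniz rule
\[
(fg)_{\xi\xi}=f_{\xi\xi}g+2f_\xi g_\xi+f g_{\xi\xi}.
\]
A direct computation gives $g_\xi=-\gamma\frac{\xi-x_p}{r}g$ (hence $|g_\xi|\le\gamma$) and
\[
g_{\xi\xi}=\left(\gamma^2\frac{(\xi-x_p)^2}{r^2}-\gamma\frac{(\eta-y_q)^2}{r^3}\right)g.
\]
The first two Leibniz terms are uniformly controlled by $\|G\|_{C^2(\bar\Omega)}$ together with the bounds $|g|\le 1$ and $|g_\xi|\le\gamma$. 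Similarly, the non-singular piece $\gamma^2\frac{(\xi-x_p)^2}{r^2}g$ of $g_{\xi\xi}$ contributes at most $\gamma^2|f|$, which is bounded since $f$ is continuous on $\bar\Omega$.

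The main obstacle is the apparent $1/r$ singularity in $f\cdot g_{\xi\xi}$ coming from the term $\gamma(\eta-y_q)^2/r^3$. I would remove it by exploiting that $f$ vanishes at $(x_p,y_q)$: since $G\in C^2(\bar\Omega)\subset C^{0,1}(\bar\Omega)$, the mean value theorem yields $|f(\xi,\eta)|\le\|\nabla G\|_{L^\infty(\bar\Omega)}\,r$. Combined with $(\eta-y_q)^2\le r^2$ this gives $\bigl|f\cdot\gamma(\eta-y_q)^2/r^3\bigr|\le\gamma\|\nabla G\|_{L^\infty(\bar\Omega)}$, which is uniform in $(\xi,\eta)$. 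The estimate on $\partial^2/\partial\eta^2$ then follows by interchanging the roles of $\xi$ and $\eta$ in the same argument. Assembling the three bounds completes the proof.
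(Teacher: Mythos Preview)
Your proposal is correct and follows essentially the same route as the paper. For $\phi_\sigma$ the paper simply cites \cite{Sun2018}, which amounts to the Taylor-cancellation argument you sketch; for the second-derivative bounds the paper performs the same Leibniz-type expansion (writing the coefficient of $f$ as $\gamma^2(\xi-x_p)^2/r^2+\gamma(\xi-x_p)^2/r^3-\gamma/r$, which is just your $\gamma^2(\xi-x_p)^2/r^2-\gamma(\eta-y_q)^2/r^3$ before simplification) and then uses the mean value theorem exactly as you do to produce the factor $r$ that kills the $1/r$ singularity.
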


\begin{proof}
  The proof of the first inequality of \eqref{eqphikz} can be found in \cite{Sun2018}. For the second one, by simple calculation, we obtain
  \begin{equation*}
    \begin{aligned}
      &\frac{\partial^2}{\partial\xi^2}\frac{G(\xi,\eta)-G(x_p,y_q)}{e^{\gamma \sqrt{(\xi-x_p)^2+(\eta-x_q)^2}}}\\
      =&e^{-\gamma \sqrt{(\xi-x_p)^2+(\eta-x_q)^2}}\frac{\partial^2 G(\xi,\eta)}{\partial\xi^2}\\
      &-2\frac{\gamma (\xi-x_p) e^{-\gamma  \sqrt{(\xi-x_p)^2+(\eta-y_q)^2}}}{\sqrt{(\xi-x_p)^2+(\eta-y_q)^2}}\frac{\partial G(\xi,\eta)}{\partial\xi}\\
      &+\left(\frac{\gamma^2  (\xi-x_p)^2 e^{-\gamma \sqrt{(\xi-x_p)^2+(\eta-y)^2}}}{(\xi-x_p)^2+(\eta-y_q)^2}+\frac{\gamma  (\xi-x_p)^2 e^{-\gamma  \sqrt{(\xi-x_p)^2+(\eta-y_q)^2}}}{\left((\xi-x_p)^2+(\eta-y_q)^2\right)^{3/2}}\right.\\&\left.-\frac{\gamma  e^{-\gamma \sqrt{(\xi-x_p)^2+(\eta-y_q)^2}}}{\sqrt{(\xi-x_p)^2+(\eta-y_q)^2}}\right)\left(G(\xi,\eta)-G(x_p,y_q)\right).
    \end{aligned}
  \end{equation*}
 According to mean value theorem, it yields that
  \begin{equation*}
    \begin{aligned}
      &G(\xi,\eta)-G(x_p,y_q)\\
      =&G(\xi,\eta)-G(\xi,y_q)+G(\xi,y_q)-G(x_p,y_q)\\
      =&(\eta-y_q)\frac{\partial}{\partial\eta}G(\xi,\bar{\eta})+(\xi-x_p)\frac{\partial}{\partial\xi}G(\bar{\xi},y_q),
    \end{aligned}
  \end{equation*}
  where $\bar{\xi}\in(\xi,x_p)$ and $\bar{\eta}\in(\eta,y_q)$. Since $G(x,y)\in C^{2}(\bar{\Omega})$, we have
  \begin{equation*}
    \left|\frac{\partial^2}{\partial\xi^2}\frac{G(\xi,\eta)-G(x_p,y_q)}{e^{\gamma  \sqrt{(\xi-x_p)^2+(\eta-x_q)^2}}}\right|\leq C.
  \end{equation*}
The proof of the third inequality of (\ref{eqphikz}) is similar to the second one.
\end{proof}

Now, we begin to prove Theorem \ref{thmtrunerror}.
\begin{proof}
 From \eqref{spatemdef2}, \eqref{spatemdef3}, \eqref{equtodis} and \eqref{equdiswithI}, for any $p,~q$, we obtain the error function
\begin{equation}\label{error1}
\begin{split}
  e^h_{\beta,\gamma}(x_p,y_q)=&(\Delta+\gamma)^{\frac{\beta}{2}}G(x_p,y_q)-(\Delta+\gamma)_{h}^{\frac{\beta}{2}}G(x_p,y_q)\\
        =&\left(\int_{0}^{h}\int_{0}^{h}\phi_{\sigma}(\xi,\eta)(\xi^2+\eta^2)^{\frac{\sigma-2-\beta}{2}}d\eta d\xi\right.\\
        &\left.-\int_{0}^{h}\int_{0}^{h}\frac{k_\sigma}{4}\left(\phi_{\sigma}(\xi_0,\eta_1)+\phi_{\sigma}(\xi_1,\eta_0)+\phi_{\sigma}(\xi_1,\eta_1)\right)(\xi^2+\eta^2)^{\frac{\sigma-2-\beta}{2}}d\eta d\xi\right) \\
        &+\sum_{
        \begin{subarray}
         ~i=-N;j=-N;\\(i,j)\notin\mathcal{I}_{p,q}
        \end{subarray}
        }^{i=N-1;j=N-1}\left(\int_{\xi_{i}}^{\xi_{i+1}}\int_{\eta_{j}}^{\eta_{j+1}}\frac{G(\xi,\eta)-G(x_p,y_q)}{\vartheta(x_p,y_q,\xi,\eta)}d\eta d\xi-I_{p,q,i,j}\right) \\
        =&\uppercase\expandafter{\romannumeral1}+\uppercase\expandafter{\romannumeral2}.
\end{split}
\end{equation}
For the first part of \eqref{error1}, there exists
\begin{equation*}
\begin{split}
|\uppercase\expandafter{\romannumeral1}|&\leq\int_{0}^{h}\int_{0}^{h}\left(\left|\phi_{\sigma}(\xi,\eta)\right|+\frac{k_{\sigma}}{4}\left|\phi_{\sigma}(\xi_0,\eta_1)+\phi_{\sigma}(\xi_1,\eta_0)+\phi_{\sigma}(\xi_1,\eta_1)\right|\right)(\xi^2+\eta^2)^{\frac{\sigma-2-\beta}{2}} d\eta d\xi \\
&\leq\int_{0}^{h}\int_{0}^{h}\left(C(\xi^2+\eta^2)^{1-\frac{\sigma}{2}}+Ch^{2-\sigma}\right)(\xi^2+\eta^2)^{\frac{\sigma-2-\beta}{2}} d\eta d\xi .
\end{split}
\end{equation*}
Taking $\xi =\bar{p}h$, $\eta =\bar{q}h$, we have
\begin{equation*}
\begin{split}
|\uppercase\expandafter{\romannumeral1}|\leq& Ch^{2-\beta}\int_{{0}}^{1}\int_{{0}}^{1}(\bar{p}^2+\bar{q}^2)^{-\frac{\beta}{2}}d\bar{q} d\bar{p} \\
                                          & +Ch^{2-\beta}\int_{{0}}^{1}\int_{{0}}^{1}(\bar{p}^2+\bar{q}^2)^{\frac{\sigma-2-\beta}{2}}d\bar{q} d\bar{p} .
\end{split}
\end{equation*}
Since $\beta<\sigma\leq2$, we obtain $-\beta>-2$ and $\sigma-2-\beta>-2$. Then, it holds
\begin{equation*}
|\uppercase\expandafter{\romannumeral1}|\leq Ch^{2-\beta}.
\end{equation*}
For the second part of \eqref{error1}, according to Lemma \ref{lemmaintepola}, we have
\begin{equation*}
\begin{split}
|\uppercase\expandafter{\romannumeral2}|\leq C&\sum_{
        \begin{subarray}
         ~i=-N;j=-N;\\(i,j)\notin \mathcal{I}_{p,q}
        \end{subarray}
        }^{i=N-1;j=N-1}\int_{\xi_{i}}^{\xi_{i+1}}\int_{\eta_{j}}^{\eta_{j+1}}\left(\left\|\frac{\partial^2}{\partial\xi^2}\frac{G(\xi,\eta)-G(x_p,y_q)}{e^{\gamma \sqrt{(\xi-x_p)^2+(\eta-x_q)^2}}}\right\|_{L_\infty}\right.\\
        &\left.+\left\|\frac{\partial^2}{\partial\eta^2}\frac{G(\xi,\eta)-G(x_p,y_q)}{e^{\gamma  \sqrt{(\xi-x_p)^2+(\eta-x_q)^2}}}\right\|_{L_\infty}\right)h^2((\xi-x_p)^2+(\eta-x_q)^2)^{\frac{-2-\beta}{2}}d\eta d\xi .
\end{split}
\end{equation*}
Further using Lemma \ref{lemfunc2error} leads to
\begin{equation*}
\begin{split}
|\uppercase\expandafter{\romannumeral2}|\leq C&\sum_{
        \begin{subarray}
         ~i=-N;j=-N;\\(i,j)\notin \mathcal{I}_{p,q}
        \end{subarray}
        }^{i=N-1;j=N-1}\int_{\xi_{i}}^{\xi_{i+1}}\int_{\eta_{j}}^{\eta_{j+1}}h^2((\xi-x_p)^2+(\eta-x_q)^2)^{\frac{-2-\beta}{2}}d\eta d\xi\\
        &\leq Ch^{2-\beta}.
\end{split}
\end{equation*}
So for $G(x,y)\in C^2(\bar{\Omega})$, it holds that
\begin{equation*}
\begin{aligned}
\left\|e^h_{\beta,\gamma}\right\|_{l_\infty}\leq Ch^{2-\beta}, ~
\left\|e^h_{\beta,\gamma}\right\|_{l_2}\leq Ch^{2-\beta}.
\end{aligned}
\end{equation*}
%Then, the proof is completed.
\end{proof}

  \section {Positive definiteness of matrix $\mathbf{A_s}$}\label{appendApos}
  \begin{lemma}[\cite{Axelsson1996}]\label{lemmaGersgorin}
  The spectrum $\lambda(A)$ of the matrix $A=[a_{i,j}]$ is enclosed in the union of the discs
  \begin{equation*}
  C_i=\left\{z\in \mathbb{C};\,|z-a_{i,i}|\leq\sum_{i\neq j}|a_{i,j}|\right\},~1\leq i\leq n
  \end{equation*}
  and in the union of the discs
  \begin{equation*}
  C'_i=\left\{z\in \mathbb{C};\,|z-a_{i,i}|\leq\sum_{i\neq j}|a_{j,i}|\right\},~1\leq i\leq n.
  \end{equation*}
\end{lemma}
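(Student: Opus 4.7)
The plan is to prove each of the two disc-inclusion statements by a direct argument from the eigenvalue equation, using the fact that an eigenvector must attain its maximum modulus in some coordinate. For the first statement, let $\lambda\in\lambda(A)$ and pick a nonzero eigenvector $v=(v_1,\ldots,v_n)^T$ with $Av=\lambda v$. Choose an index $i$ such that $|v_i|=\max_{1\le k\le n}|v_k|$; note $|v_i|>0$ since $v\neq 0$. Writing out the $i$-th component of $Av=\lambda v$ and isolating the diagonal term gives $(\lambda-a_{ii})v_i=\sum_{j\neq i}a_{ij}v_j$. The key step is then the chain of estimates
\begin{equation*}
|\lambda-a_{ii}|\,|v_i|\;\le\;\sum_{j\neq i}|a_{ij}|\,|v_j|\;\le\;|v_i|\sum_{j\neq i}|a_{ij}|,
\end{equation*}
where the second inequality uses the maximality of $|v_i|$. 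Dividing by $|v_i|$ shows $\lambda\in C_i$, hence $\lambda$ lies in the union of the row discs.

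For the second statement I would appeal to the fact that $A$ and its transpose $A^T$ share the same spectrum, since $\det(A-\lambda I)=\det(A^T-\lambda I)$. Applying the first part of the theorem to $A^T$, whose off-diagonal entries in row $i$ are precisely $a_{ji}$ for $j\neq i$, immediately produces the column-sum discs $C_i'$ as enclosures for $\lambda(A^T)=\lambda(A)$.

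There is no real obstacle here: the result is a classical application of the maximum-modulus selection trick on eigenvectors, and both parts follow without additional machinery. The only subtlety worth flagging in the write-up is the separation of the two discs, which is handled cleanly by the $A\leftrightarrow A^T$ reduction rather than by redoing the argument from scratch with column sums.
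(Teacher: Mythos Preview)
Your proof is correct and is the classical argument for the Gershgorin disc theorem. Note that the paper does not supply its own proof of this lemma: it is quoted verbatim from the reference \cite{Axelsson1996} as a known result, so there is nothing further to compare.
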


\begin{lemma}\label{lemweporps2}
  The weights of the tempered fractional Laplacian satisfy %the following properties,
  \begin{equation*}
  \left\{
  \begin{split}
    &\sum_{i=-N}^{i=N}\sum_{j=-N}^{j=N}w^{\beta,\gamma}_{p,q,i,j}>CW^\infty_{p,q}>0;\\
    &w^{\beta,\gamma}_{p,q,i,j}<0,~~~~~(i,j)~\neq (p,q),
  \end{split}
  \right.
  \end{equation*}
  for any given $p,q$.
\end{lemma}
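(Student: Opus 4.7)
The plan is to prove the two claims of Lemma \ref{lemweporps2} from three observations: (a) each building block $W^1_{\cdot,\cdot}, W^2_{\cdot,\cdot}, W^3_{\cdot,\cdot}, W^4_{\cdot,\cdot}$ is strictly positive; (b) consequently every off-diagonal entry of \eqref{equweightoffl} is a strictly negative multiple of $c_{2,\beta}$; and (c) the diagonal entry has been designed so that all off-diagonal $W^k$--contributions cancel in the row sum, leaving only $c_{2,\beta}W^\infty_{p,q}$.

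First I would establish positivity of each $W^k$. For any indices $(i,j)$ such that the cell $[\xi_i,\xi_{i+1}]\times[\eta_j,\eta_{j+1}]$ does not touch the origin, expanding the definitions \eqref{equdefG}--\eqref{equdefW} yields, for example,
\[
W^1_{i,j}=\frac{1}{h^2}\int_{\xi_i}^{\xi_{i+1}}\!\!\int_{\eta_j}^{\eta_{j+1}}(\xi_{i+1}-\xi)(\eta_{j+1}-\eta)(\xi^2+\eta^2)^{\frac{-2-\beta}{2}}d\eta\,d\xi,
\]
and analogous representations for $W^2, W^3, W^4$ in which the bilinear-interpolation factor is again a product of two non-negative affine functions on the cell. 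Since the kernel $(\xi^2+\eta^2)^{(-2-\beta)/2}$ is strictly positive on the interior, every such $W^k$ is strictly positive. For the eight near-singular indices covered by \eqref{equdefWsp} the values are given explicitly as positive multiples of $W_{0,0}$, and $W_{0,0}>0$ because the integrand in \eqref{equdefG00} is positive and integrable on $[0,h]^2$ for $\sigma\in(\beta,2]$. The second claim of the lemma now follows immediately: for every $(i,j)\neq(p,q)$ the relevant case of \eqref{equweightoffl} expresses $w^{\beta,\gamma}_{p,q,i,j}$ as $-c_{2,\beta}$ times a (non-empty, positive) sum of terms $W^k_{\cdot,\cdot}/e^{\gamma h\sqrt{\cdot}}$, hence $w^{\beta,\gamma}_{p,q,i,j}<0$.

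For the first claim, let $X_{i,j}\geq 0$ denote the bracketed quantity appearing in the off-diagonal case of \eqref{equweightoffl}, so that $w^{\beta,\gamma}_{p,q,i,j}=-c_{2,\beta}X_{i,j}$ whenever $(i,j)\neq(p,q)$. A careful inspection of the diagonal case of \eqref{equweightoffl} shows that every $W^k/e^{\gamma h\sqrt{\cdot}}$ term collected inside the outer bracket matches exactly one off-diagonal contribution $X_{i,j}$, with the four extreme corners $(\pm N,\pm N)$, the four boundary edges and the interior all accounted for. Consequently
\[
w^{\beta,\gamma}_{p,q,p,q}=c_{2,\beta}\Big(\sum_{(i,j)\neq(p,q)}X_{i,j}+W^\infty_{p,q}\Big),
\]
and adding the off-diagonal terms yields the clean identity
\[
\sum_{i=-N}^{N}\sum_{j=-N}^{N}w^{\beta,\gamma}_{p,q,i,j}=c_{2,\beta}\,W^\infty_{p,q}.
\]
Since $W^\infty_{p,q}$ is the integral of a strictly positive kernel over the set $\mathbb{R}^2\setminus\Omega$ of positive measure, $W^\infty_{p,q}>0$, and picking any $C\in(0,c_{2,\beta})$ produces the desired strict inequality $\sum w^{\beta,\gamma}_{p,q,i,j}>CW^\infty_{p,q}>0$.

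The main obstacle is the bookkeeping in the telescoping step: one has to verify by an exhaustive case check that every off-diagonal $W^k_{i-p,j-q}/e^{\gamma h\sqrt{\cdot}}$ (interior points, the four boundary edges, and the four extreme corners) appears in the diagonal-entry formula with the correct multiplicity and sign, so that the cancellation leaving $c_{2,\beta}W^\infty_{p,q}$ is exact rather than merely approximate. Once this combinatorial matching is done, the sign statements from Steps 1--2 close the argument.
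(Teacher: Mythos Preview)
Your approach is correct and essentially the standard one: positivity of the bilinear-interpolation weights $W^k$ (via the integral representation with non-negative factor), negativity of the off-diagonal entries, and the telescoping identity for the row sum obtained by testing the discrete operator on a constant. The paper itself does not give a proof here at all; it simply states that the argument is ``similar to the proof in \cite{Sun2018}''. Your write-up is therefore more detailed than what the paper provides, and the method (positivity of $W^k$ combined with the constant-test-function cancellation) is exactly the argument one expects from that reference.

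One small point worth tightening in your bookkeeping step: in the diagonal-entry formula \eqref{equweightoffl} the interior sum formally ranges over all $(i,j)$ with $-N+1\le i,j\le N-1$, which includes $(i,j)=(p,q)$ and hence the symbols $W^k_{0,0}$. These are not among the special values \eqref{equdefWsp} and are not well defined by the generic formulas \eqref{equdefG}--\eqref{equdefW} (the integrals diverge). The cleanest way to avoid this is to argue directly from the construction: the discrete operator applied to the constant function $G\equiv 1$ returns exactly $c_{2,\beta}W^\infty_{p,q}$, because every cell approximation $I_{p,q,i,j}$ and the weighted-trapezoidal contribution involve only differences $G(\text{node})-G(x_p,y_q)$ and hence vanish. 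This yields $\sum_{i,j} w^{\beta,\gamma}_{p,q,i,j}=c_{2,\beta}W^\infty_{p,q}$ without any termwise matching, and sidesteps the $(0,0)$ ambiguity entirely.
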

\begin{remark}
The proof of Lemma \ref{lemweporps2} is similar to the proof in \cite{Sun2018}.
\end{remark}

According to Lemmas \eqref{lemmaGersgorin} and \eqref{lemweporps2}, we get that matrix $\mathbf{A_s}$ is strictly diagonally dominant and symmetric positive definite.

\newpage

\end{document}